\newtheorem{thm}{Theorem}[section]
\newtheorem{prop}[thm]{Proposition}
\newtheorem{lem}[thm]{Lemma}
\newtheorem{cor}[thm]{Corollary}
\theoremstyle{definition}
\newtheorem{exm}[thm]{Example}
\newtheorem{defn}[thm]{Definition}
\theoremstyle{remark}
\newtheorem{remk}[thm]{Remark}
\newtheorem{remks}[thm]{Remarks}
\newtheorem{exms}[thm]{Examples}
\newtheorem{notat}[thm]{Notation}
\numberwithin{equation}{section}
\newcommand{\thmref}{Theorem~\ref}
\newcommand{\propref}{Proposition~\ref}
\newcommand{\surj}{\twoheadrightarrow}
\newcommand{\inj}{\hookrightarrow}
\newcommand{\codim}{{\rm codim}}
\newcommand{\Hom}{{\rm Hom}}
\newcommand{\Spec}{{\rm Spec \,}}
\newcommand{\Sch}{{\operatorname{\mathbf{Sch}}}}
\newcommand{\<}{\langle}
\renewcommand{\>}{\rangle}
\newcommand{\Spc}{{\mathbf{Spc}}}
\newcommand{\Sm}{{\mathbf{Sm}}}
\newcommand{\ds}{{/\kern-3pt/}}
\newcommand{\Tor}{{\operatorname{Tor}}}
\newcommand{\colim}{\mathop{\text{colim}}}
\newcommand{\ov}{\overline}
\newcommand{\wt}{\widetilde}
\newcommand{\wh}{\widehat}
\newcommand{\tuborg}{\left\{\begin{array}{ll}}
\newcommand{\sluttuborg}{\end{array}\right.}
\newcommand{\sC}{{\mathcal C}}
\newcommand{\sD}{{\mathcal D}}
\newcommand{\sF}{{\mathcal F}}
\newcommand{\sG}{{\mathcal G}}
\newcommand{\sH}{{\mathcal H}}
\newcommand{\sK}{{\mathcal K}}
\newcommand{\sO}{{\mathcal O}}
\newcommand{\sS}{{\mathcal S}}
\newcommand{\sU}{{\mathcal U}}
\newcommand{\sV}{{\mathcal V}}
\newcommand{\A}{{\mathbb A}}
\newcommand{\C}{{\mathbb C}}
\newcommand{\G}{{\mathbb G}}
\newcommand{\N}{{\mathbb N}}
\renewcommand{\P}{{\mathbb P}}
\newcommand{\Z}{{\mathbb Z}}
\begin{document}
\title{The completion problem for equivariant $K$-theory}
\author{Amalendu Krishna}
\address{School of Mathematics, Tata Institute of Fundamental Research,  
1 Homi Bhabha Road, Colaba, Mumbai, 400 005, India}
\email{amal@math.tifr.res.in}

\baselineskip=13.4044pt

\keywords{Equivariant $K$-theory; group action; homotopy theory}

\subjclass[2010]{Primary 19E08; Secondary 14F43}



\begin{abstract}
In this paper, we study the Atiyah-Segal completion problem for 
the equivariant algebraic $K$-theory.
We show that this completion problem  has a positive solution for the action 
of connected groups on smooth projective schemes. In contrast, we show that 
this problem has a negative solution for non-projective smooth schemes, even 
if the action has only finite stabilizers. 
\end{abstract}

\maketitle

\section{Introduction}\label{section:Intro}
The equivariant $K$-theory for topological spaces with group action was
invented by Atiyah before Quillen discovered the algebraic $K$-theory of
schemes. This theory had a significant impact on the subsequent works of
Atiyah, Segal and others, which included the celebrated index theorem of
Atiyah and Singer. This paper aims to study the Atiyah-Segal 
completion problem \cite{ASegal} for algebraic equivariant $K$-theory.

\subsection{The Atiyah-Segal Theorem}
For a compact Lie group $G$, let $R(G)$ denote the ring of
virtual representations of $G$ and let $I_G$ denote the augmentation ideal
given by the kernel of the map $\epsilon: R(G) \to \Z$ that takes
a virtual representation to its rank. 
For a compact Hausdorff topological space $X$ with $G$-action, let $K^G_*(X)$ 
denote the equivariant $K$-theory of $G$-equivariant complex
vector bundles on $X$. Let $\wh{{K^G_*(X)}_{I_G}}$ denote the $I_G$-adic
completion of the $R(G)$-module $K^G_*(X)$.

\enlargethispage{5pt}

To study the representations 
of $G$ in terms of the singular cohomology of its classifying space $B_G$,
Atiyah \cite{Atiyah} showed for a finite group $G$ that there is indeed 
a strong connection between $R(G)$ and the topological $K$-theory of $B_G$.
More precisely, if $\sK(B_G)$ denotes the inverse limit of the Grothendieck 
groups of complex vector bundles on the finite skeleta of $B_G$, 
then there is a natural isomorphism
\begin{equation}\label{eqn:Atiyah}
\wh{{R(G)}_{I_G}} \xrightarrow{\simeq} \sK(B_G).
\end{equation}

This result was extended to all compact Lie groups by Atiyah and
Hirzebruch \cite{AH}. This was subsequently reinterpreted 
by Atiyah and Segal \cite{ASegal} in terms of the following very general
statement
about the equivariant $K$-theory of compact Hausdorff topological spaces.
Let $E_G \to B_G$ denote the universal $G$-bundle and for a compact
$G$-space $X$, let $X_G$ denote {\sl Borel} space 
$X \stackrel{G}{\times} E_G$ (see \S~\ref{subsection:Free}).

\begin{thm}[Atiyah-Segal]\label{thm:ASMain}
Let $X$ be a compact $G$-space such that $K^G_*(X)$ is a finite $R(G)$-module.
Then the map $K^G_*(X) \to K_*(X_G)$ induces an isomorphism
\[
\wh{{K^G_*(X)}_{I_G}} \xrightarrow{\simeq} K_*(X_G). 
\]
\end{thm}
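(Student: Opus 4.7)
The plan is to deduce the theorem from the case $X=\mathrm{pt}$ (which is the Atiyah-Hirzebruch completion theorem $\wh{R(G)_{I_G}}\cong K^*(B_G)$) by a devissage argument exploiting the cohomological nature of both sides. Both $X\mapsto K^G_*(X)$ and $X\mapsto K_*(X_G)$ are $G$-equivariant cohomology theories on compact $G$-spaces, taking cofiber sequences to long exact sequences, and the natural transformation $K^G_*(X)\to K_*(X_G)$ is induced by the projection $X\times E_G\to X\times_G E_G$.

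To make the completion map precise, I would exhaust $E_G$ by finite-dimensional $G$-free submanifolds $V_n$ (for instance, Stiefel manifolds in a growing sum of faithful representations), so that $X_G=\varinjlim_n X\times_G V_n$ and Milnor's $\varprojlim^1$ sequence computes $K_*(X_G)$ from the tower $\{K_*(X\times_G V_n)\}$. Since $G$ acts freely on $V_n$, descent yields $K_*(X\times_G V_n)\cong K^G_*(X\times V_n)$, and the essential content of the theorem is that the natural map of towers $\{K^G_*(X)/I_G^n K^G_*(X)\}\to \{K^G_*(X\times V_n)\}$ is a pro-isomorphism.

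Next I would reduce to the case $X=G/H$ by devissage on $X$. Compact $G$-CW complexes are built from orbits $G/H$ by iterated equivariant cofiber sequences, and both sides of the comparison map take such sequences to long exact sequences; completion is exact on the resulting short exact sequences of finitely generated $R(G)$-modules because $R(G)$ is Noetherian and the finiteness hypothesis is hereditary under subquotients appearing in the cellular filtration. The five-lemma then reduces the statement to $X=G/H$, where $K^G_*(G/H)=R(H)$ and $X_G=B_H$, so the assertion becomes $\wh{R(H)_{I_G}}\cong K^*(B_H)$. Combined with the Atiyah-Hirzebruch isomorphism $\wh{R(H)_{I_H}}\cong K^*(B_H)$, this further reduces to the purely algebraic claim that $I_G R(H)$ and $I_H$ define the same topology on $R(H)$.

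The principal obstacle is twofold. First is the control of $\varprojlim^1$ and the Mittag-Leffler property of the tower $\{K^G_*(X\times V_n)\}$; this is exactly where the hypothesis that $K^G_*(X)$ is a finite $R(G)$-module is indispensable, since finiteness together with Noetherianity forces the tower to be pro-constant modulo each $I_G^k$. Second is the proof that $I_G R(H)$ and $I_H$ define the same topology on $R(H)$ — the Atiyah-Segal topology comparison theorem — which, despite being an algebraic statement about representation rings, requires a nontrivial character-theoretic analysis (using restriction and induction adjunctions) and is arguably the technical heart of the whole argument.
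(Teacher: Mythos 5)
A preliminary remark: the paper does not prove Theorem~\ref{thm:ASMain} at all — it is quoted from Atiyah and Segal \cite{ASegal} purely as motivation for the algebraic results that follow — so there is no internal proof to compare with; your sketch can only be measured against the classical argument.

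As a proof of the theorem as stated, your outline has two genuine gaps. First, the hypothesis is only that $X$ is a compact $G$-space with $K^G_*(X)$ a finite $R(G)$-module; such a space need not admit a finite $G$-CW structure, so the cellular devissage reducing to orbits $G/H$ does not cover the stated generality (the finiteness hypothesis is imposed precisely because $X$ is merely compact — for finite $G$-CW complexes it is automatic). Second, after reducing to $X=G/H$ you invoke $\wh{{R(H)}_{I_H}}\cong K^*(B_H)$ for an arbitrary closed subgroup $H$; for a general (possibly non-connected, positive-dimensional) compact Lie group this is exactly the $X=\mathrm{pt}$ instance of the theorem being proved — Atiyah--Hirzebruch covers connected groups and Atiyah the finite ones — so as written the base case of your reduction is essentially circular. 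The actual Atiyah--Segal proof runs the induction on the group rather than on cells of $X$: embed $G$ in $U(n)$ and use $K^G_*(X)\cong K^{U(n)}_*(U(n)\times_G X)$ together with Segal's theorem that the $I_{U(n)}$-adic and $I_G$-adic topologies agree on finite $R(G)$-modules; reduce $U(n)$ to its maximal torus by the push-pull (flag bundle) splitting; and treat tori by induction on rank using the cofibration of a representation sphere, the finiteness hypothesis supplying exactness of completion and the Mittag--Leffler control of ${\varprojlim}^1$. Your identification of the two technical pressure points (the ${\varprojlim}^1$/Mittag--Leffler issue and the comparison of the $I_G$- and $I_H$-adic topologies on $R(H)$) is accurate and these do occur in the real proof, but the route through cellular devissage plus the point case does not, as it stands, establish the theorem in the stated generality.
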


\subsection{The algebraic formulation}
The completion theorem for equivariant $K$-theory in a sense studies
the question of the impact of weak-equivalence between two $G$-spaces
(without reference to the group action) on their equivariant $K$-theory.
After the invention of $\A^1$-homotopy theory, this question has become
very pertinent in algebraic geometry.

The equivariant algebraic $K$-theory of schemes under the action of
group schemes was founded by Thomason \cite{Thomason3} using the ideas
of Quillen's $K$-theory of exact categories and Waldhausen's
$K$-theory of categories with cofibrations and weak equivalences. 
In order to formulate and study the analogue of the 
Atiyah-Segal completion problem for the algebraic $K$-theory, we need
algebraic objects which correspond to $X_G$.

One problem is that $X_G$ is not a scheme and one does not
know how to define $K$-theory of $X_G$. However, this problem now has a
solution, thanks to the invention of $\A^1$-homotopy theory of schemes by
Morel and Voevodsky \cite{MV}. These authors have constructed a category
of motivic spaces which includes all smooth schemes over a base scheme
as well as the colimits of such smooth schemes. They further show that
there is a generalized cohomology theory on the stable homotopy category
of motivic spaces which restricts to Thomason's $K$-theory for smooth schemes.

The second problem is that the completion theorem of Atiyah and Segal is based 
on a strong assumption that $K^G_*(X)$ is a finite $R(G)$-module and this 
assumption is very crucial in their proofs. 
Thomason \cite{Thomason4} was probably the first to consider the completion 
problem for the algebraic $K$-theory. But he was forced to work with the 
$K$-theory with finite coefficients and also had to invert the Bott element. 
The reason he had to do this
is that the Bott inverted algebraic $K$-theory with finite
coefficients has the above finiteness property. This allowed 
Thomason to use the techniques of Atiyah-Segal.
Such a finiteness assumption is almost never true (unless we work over
finite fields) for the algebraic $K$-theory, even for the algebraic 
$K$-theory of a point.

\subsection{The main results}
The aim of this paper is to solve the algebraic Atiyah-Segal completion
problem. We show using $\A^1$-homotopy theory 
that the algebraic analogue of topological spaces like $X_G$
do exist as motivic spaces and this allows one to study the 
Atiyah-Segal completion problem for schemes. We show 
that this completion problem has a positive solution
for smooth and projective schemes. 

In contrast, we show an unexpected result that 
this problem has a negative solution if we weaken
the projectivity assumption. We show that this negative solution can occur
even if one assumes that the underlying group action has finite stabilizers.
The main results of this paper roughly
look as follows. We shall state these results in more precise form and 
explain the underlying terms in the body of the text.

\begin{thm}\label{thm:MAIN-P}
Let $G$ be a connected split reductive group over a field $k$ acting 
on a smooth projective $k$-scheme $X$ and let $\wh{{K^G_*(X)}_{I_G}}$
denote the $I_G$-adic completion of the $R(G)$-module $K^G_*(X)$.
Then for every $p \ge 0$, there is an isomorphism
\begin{equation}\label{eqn:Alg-AS0*1-P}
\wh{{K^G_p(X)}_{I_G}} \xrightarrow{\simeq} K_p(X_G). 
\end{equation}

\end{thm}

Since a connected linear algebraic group in characteristic zero
has a Levi decomposition, the reductivity assumption is not necessary
in this case.

\vskip .3cm

For $p = 0$, the completion theorem holds in the most general situation
without any condition on $G$ and $X$.

\begin{thm}\label{thm:Alg-AS-0-P}
Let $G$ be a linear algebraic group over $k$ acting on a smooth scheme $X$. 
Then there is an isomorphism
\begin{equation}\label{eqn:Alg-AS0*-P}
\wh{{K^G_0(X)}_{I_G}} \xrightarrow{\simeq} K_0(X_G).
\end{equation}
\end{thm}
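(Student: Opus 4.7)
The plan is to realize $X_G$ as a motivic space via the Totaro--Morel--Voevodsky approximation of the Borel construction, and then reduce the completion theorem to a cofinality statement for two filtrations on the $R(G)$-module $K^G_0(X)$. Choose a sequence $(V_n, U_n)$, where $V_n$ is a finite-dimensional rational $G$-representation over $k$ and $U_n \subseteq V_n$ is a $G$-stable open subscheme on which $G$ acts freely, with $\codim_{V_n}(V_n \setminus U_n) \to \infty$; a standard choice is $V_n = V^{\oplus n}$ for a fixed faithful $G$-representation $V$. Then $EG \simeq \hocolim_n U_n$ in the motivic homotopy category, so that
\[
X_G \ \simeq \ \hocolim_n \bigl(X \stackrel{G}{\times} U_n\bigr),
\]
a sequential homotopy colimit of smooth quasi-projective $k$-schemes.

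Given this presentation, the argument proceeds in three steps. First, identify $K_0(X_G)$ with $\lim_n K_0(X \stackrel{G}{\times} U_n)$; for $K_0$ this either amounts to the vanishing of the relevant $\lim^1$-term on the $K_1$-tower, or one may simply take this inverse limit as the defining formula for $K_0$ on the pro-object. Second, because $X \times U_n \to X \stackrel{G}{\times} U_n$ is a principal $G$-bundle with free action, equivariant descent yields $K_0(X \stackrel{G}{\times} U_n) \cong K^G_0(X \times U_n)$. Then combining equivariant $\A^1$-homotopy invariance $K^G_0(X \times V_n) \cong K^G_0(X)$ with the equivariant localization sequence along the closed embedding $X \times (V_n \setminus U_n) \inj X \times V_n$ produces a surjection $K^G_0(X) \surj K^G_0(X \times U_n)$ whose kernel $J_n$ is, by the self-intersection formula, controlled by $K$-theoretic Euler classes --- in particular $\lambda_{-1}(V_n^\vee) \in J_n$, this being the image of $i_*[\sO_X]$ for the zero section $i \colon X \inj X \times V_n$, which lies inside $X \times (V_n \setminus U_n)$ since $0$ is $G$-fixed.

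The third step, which is the main obstacle, is to show that the descending filtrations $\{J_n\}$ and $\{I_G^n K^G_0(X)\}$ of $K^G_0(X)$ define the same topology, giving
\[
K_0(X_G) \ \cong \ \lim_n K^G_0(X)/J_n \ \cong \ \wh{K^G_0(X)}_{I_G}.
\]
The easier half, that every $J_n$ lies in a high power of $I_G$, follows from $\lambda_{-1}(V^\vee) \in I_G^{\dim V}$ for $V$ without trivial summands, combined with codimension estimates on the singular locus $V_n \setminus U_n$. The harder half, that each $I_G^n K^G_0(X)$ contains some $J_m$, requires arranging $(V_n, U_n)$ so that the Euler classes $\lambda_{-1}(V_n^\vee)$ generate a cofinal family of neighbourhoods of $0$ in the $I_G$-adic topology on $R(G)$. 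For connected split reductive $G$ this is the classical Atiyah--Segal lemma, algebraized by Thomason; for a general linear algebraic group $G$ one reduces to this case by first passing to the Levi quotient $G/R_u(G)$ via unipotent $\A^1$-homotopy invariance of equivariant $K$-theory, and then to the identity component by a Noetherian induction over the finite group $\pi_0(G)$.
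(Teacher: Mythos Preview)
Your overall strategy matches the paper through Step~2: both set up the inverse system $\{K_0(X \stackrel{G}{\times} U_n)\}$, identify each term with $K^G_0(X)/J_n$ via descent, homotopy invariance, and the equivariant localization sequence, and then aim to compare the $\{J_n\}$-filtration with the $I_G$-adic one. There are, however, two real gaps in Step~3.

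First, your reduction from a general linear algebraic group to the connected split reductive case does not work over an arbitrary field $k$. A Levi decomposition $G = L \ltimes G^u$ need not exist in positive characteristic, and even when it does the unipotent radical need not be split; the paper only invokes that mechanism in characteristic zero (Proposition~2.7). The paper's actual reduction is different and characteristic-free: embed $G$ as a closed subgroup of $GL_n$, set $Y = X \stackrel{G}{\times} GL_n$, and use Thomason's Morita equivalence $K^G_*(X) \cong K^{GL_n}_*(Y)$ together with the fact (due to Edidin--Graham) that the $I_G$-adic and $I_{GL_n}$-adic topologies on $K^G_*(X)$ coincide. On the Borel-space side one has $X_G \cong Y_{GL_n}$ in $\sH(k)$. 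This reduces everything to $GL_n$, bypassing both $\pi_0(G)$ and the unipotent radical.

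Second, your cofinality argument is muddled: as written, your ``easier half'' ($J_n \subseteq I_G^{f(n)}K^G_0(X)$) and your ``harder half'' ($J_m \subseteq I_G^n K^G_0(X)$) are the \emph{same} containment, so the direction $I_G^m K^G_0(X) \subseteq J_n$ is never addressed. Moreover, knowing that the single element $\lambda_{-1}(V_n^\vee)$ lies in both $J_n$ and a high power of $I_G$ says nothing about the rest of $J_n$. The paper does not argue by direct Euler-class estimates at all. For a split torus it quotes \cite[Theorem~2.1]{EG1}, which directly establishes the isomorphism of pro-$R(T)$-modules $\{K^T_0(X)/I_T^i K^T_0(X)\} \cong \{K^T_0(X)/J^i(X)\}$; the passage from $T$ to a connected split reductive $G$ then goes via Thomason's push-pull operators (restriction and induction along $T \subset B \subset G$), which split $\sK_0(X_G)$ off $\sK_0(X_T)$ and likewise for the completed equivariant groups. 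Finally, the identification $K_0(X_G) \cong \varprojlim_n K_0(X^n_G)$ is not a convention here but a theorem: the Mittag-Leffler condition killing the $\varprojlim^1$-term is deduced from surjectivity in the torus case together with those same push-pull operators.
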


This result also holds for singular schemes 
(see Theorem~\ref{thm:Alg-AS-All}). In particular, it extends 
\cite[Theorem~3.1]{Totaro1} (proven for $\Spec(k)$) to all quasi-projective
$k$-schemes.

The equivariant Riemann-Roch theorem of Edidin and Graham 
\cite{EG1} is an immediate consequence of Theorem~\ref{thm:Alg-AS-0-P} 
and its generalization for singular schemes.
Another direct consequence of \thmref{thm:Alg-AS-0-P} is the 
following.

\begin{cor}\label{cor:Equiv-detect}
Let $G$ be a linear algebraic group over $k$ and let $f:X \to Y$ be a 
$G$-equivariant morphism of smooth schemes over $k$ with $G$-action.
Suppose that $f$ is an $\A^1$-weak equivalence 
(without reference to $G$-action), then the map
$f^*: \wh{{K^G_0(Y)}_{I_G}} \to \wh{{K^G_0(X)}_{I_G}}$ is an isomorphism.
\end{cor}

This result can be useful in showing the failure of $\A^1$-weak equivalence
between smooth schemes with group actions.

As another application of our completion theorems, we prove the
equivariant analogue of the Quillen-Lichtenbaum conjecture 
(see Theorem~\ref{thm:EQLC}).

\vskip .3cm

The following result provides a counterexample to the completion theorem
for non-projective schemes.

\begin{thm}\label{thm:Counter-E-P}
Let $G$ be a one-dimensional torus over $\C$ and let
$X$ be the quotient of $G$ by the subgroup $\mu_2$. Then 
\begin{enumerate}
\item 
For $p > 0$ odd, the map $\wh{{K^G_p(X)}_{I_G}} \to K_p(X_G)$ is 
an isomorphism.
\item
For $p > 0$ even, there is a short exact sequence
\[
0 \to \wh{{K^G_p(X)}_{I_G}} \to K_p(X_G) \to \Z_2 \to 0.
\]
\end{enumerate}
\end{thm}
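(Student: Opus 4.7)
The strategy is to compute both sides of the map $\wh{{K^G_p(X)}_{I_G}} \to K_p(X_G)$ explicitly. Since $G = \mathbb{G}_m$ acts on $X = G/\mu_2$ by translation with stabilizer $\mu_2$, Morita equivalence gives
\[
K^G_p(X) \;\cong\; K^{\mu_2}_p(\Spec \C) \;=\; R(\mu_2) \otimes_{\Z} K_p(\C),
\]
with $R(\mu_2) = \Z[s]/(s^2-1)$. The restriction $R(G) \to R(\mu_2)$ sends $t \mapsto s$, so $I_G$ pushes to the ideal $(s-1) \subset R(\mu_2)$, and the identity $(s-1)^2 = -2(s-1)$ gives $I_G^n K^G_p(X) = 2^{n-1}(s-1)\otimes K_p(\C)$ for all $n \geq 1$. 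Hence
\[
\wh{{K^G_p(X)}_{I_G}} \;\cong\; K_p(\C) \;\oplus\; \wh{K_p(\C)}_{\,2},
\]
where the right-hand summand is the $2$-adic completion of $K_p(\C)$. By Suslin's rigidity theorem, $K_p(\C)$ is $2$-divisible for every $p \geq 1$ (uniquely so when $p$ is even $\geq 2$, and a divisible extension of $\Q/\Z$ by a uniquely divisible group when $p$ is odd), so its $2$-adic completion vanishes and $\wh{{K^G_p(X)}_{I_G}} = K_p(\C)$ for $p \geq 1$.

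For the right-hand side, model the motivic Borel space $X_G = EG \times^G (G/\mu_2) = B\mu_2$ by the admissible gadgets $Y_n := (\A^n \setminus 0)/\mu_2$, where $\mu_2$ acts through the inclusion $\mu_2 \hookrightarrow G$ on the weight-$1$ representation. Each $Y_n$ is the total space of $\sO_{\P^{n-1}}(-2)$ with the zero section removed, so the associated localization sequence reads
\[
K_p(\P^{n-1}) \xrightarrow{\xi(2-\xi)} K_p(\P^{n-1}) \to K_p(Y_n) \to K_{p-1}(\P^{n-1}) \xrightarrow{\xi(2-\xi)} K_{p-1}(\P^{n-1}),
\]
with $\xi = 1 - [\sO(-1)]$, using the relation $1 - [\sO(-2)] = \xi(2-\xi)$ in $K_*(\P^{n-1}) = K_*(\C)[\xi]/\xi^n$. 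A direct computation in $K_p(\C)[\xi]/\xi^n$ shows that the cokernel of multiplication by $\xi(2-\xi)$ equals $K_p(\C) \oplus K_p(\C)/2^{n-1}K_p(\C)$ (parallel to the finite-level computation of $R(\mu_2)/I_G^n$), while its kernel is generated by $\xi^{n-1}$ together with a $2^{n-1}$-truncated Tate-like tower.

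Passing to the limit in $n$ via the Milnor $\lim^1$ short exact sequences --- using $2$-divisibility of $K_p(\C)$ for $p \geq 1$ to ensure the cokernel system is essentially constant and to verify Mittag--Leffler for the kernel system --- one obtains
\[
0 \to K_p(\C) \to K_p(X_G) \to T_2(K_{p-1}(\C)) \to 0,
\]
where $T_2(N) = \lim_n N[2^n]$ is the $2$-adic Tate module. By Suslin's description of $K_*(\C)$, the group $K_{p-1}(\C)$ is torsion-free when $p-1$ is even (giving $T_2 = 0$), while when $p-1$ is odd $\geq 1$ it contains $\Q/\Z$ as a summand with uniquely divisible complement (giving $T_2(K_{p-1}(\C)) = \Z_2$). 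Comparing with the computation $\wh{{K^G_p(X)}_{I_G}} = K_p(\C)$: for $p$ odd one has $K_p(X_G) = K_p(\C)$, yielding (1); for $p$ even $> 0$ one has the short exact sequence of (2), with the kernel identified with $\wh{{K^G_p(X)}_{I_G}}$ under the natural completion map. The main technical obstacle will be to justify rigorously the passage from the finite-level Gysin sequences to the motivic Borel construction --- verifying the vanishing of the relevant $\lim^1$ terms and identifying the inverse limit of the kernels of $\xi(2-\xi)$ with the Tate module --- but the essential dichotomy between odd and even $p$ is forced by Suslin's rigidity.
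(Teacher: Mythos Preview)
Your argument is correct and lands on the same essential computation as the paper --- the $2$-adic Tate module of $K_{p-1}(\C)$ via Suslin's theorem --- but you reach it by a somewhat different localization sequence.

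The paper works directly with a good admissible gadget for $G=\G_m$ and invokes the Vezzosi--Vistoli self-intersection formula to obtain, for each $i$, a short exact sequence
\[
0 \to K^G_p(X)/(\rho^i) \to K_p(X^i_G) \to \,_{\rho^i}K^G_{p-1}(X) \to 0,
\]
where $\rho = 1-t$ is the Euler class of the rank-one representation and $K^G_*(X)\cong K_*(\C)\otimes R(\mu_2)$. The pro-system $\{\,_{\rho^i}K^G_{p-1}(X)\}$ is then shown (via a Tor computation for the rings $R_n=\Z[u]/(u^n,u(2-u))$) to be pro-isomorphic to $\{K_{p-1}(\C)[2^{i-1}]\}$ with transition maps multiplication by $2$; its limit is $\Z_2$ or $0$ according to parity, and its $\varprojlim^1$ vanishes.

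You instead pass via Morita to $B\mu_2$, realize $X^i_G$ concretely as $Y_i=(\A^i\setminus 0)/\mu_2$, and use the \emph{non-equivariant} Gysin sequence for the complement of the zero section in $\sO_{\P^{i-1}}(-2)$, with Euler class $\xi(2-\xi)$ acting on $K_*(\P^{i-1})=K_*(\C)[\xi]/\xi^i$. Your cokernel $K_p(\C)\oplus K_p(\C)/2^{i-1}$ matches the paper's $K^G_p(X)/(\rho^i)$, and your kernel sits in an extension of $K_{p-1}(\C)[2^{i-1}]$ by a pro-zero piece $K_{p-1}(\C)\cdot\xi^{i-1}$, so in the pro-category it agrees with the paper's $\,_{\rho^i}K^G_{p-1}(X)$. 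The underlying ring-theoretic input is literally the same: both routes hinge on the structure of $\Z[u]/(u^n,u(2-u))$.

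What each buys: the paper's route keeps the modules small (rank two over $K_*(\C)$) and packages the finite-level statement as a clean short exact sequence, at the cost of citing \cite{VV}. Your route is more elementary --- only the ordinary Gysin sequence and the projective bundle formula --- but the finite-level modules are larger and you must separately dispose of the pro-zero $\xi^{i-1}$ summand and check the transition maps by hand. Both require the same Mittag--Leffler verifications and the same appeal to Suslin.
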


\vskip .3cm

\subsection{An outline of the paper.}
A brief outline of this paper is as follows.
In \S~\ref{section:A1-homotopy}, we fix our notations and very briefly recall
the $\A^1$-homotopy theory of schemes. In 
\S~\ref{section:MBS}, we construct our
motivic Borel spaces associated to group actions on smooth schemes
and prove some of their properties. In \S~\ref{section:IOS}, we
discuss the algebraic $K$-theory of motivic Borel spaces. We also
introduce a variant, called $\sK$-theory, for these spaces. This is an algebraic
analogue of a similar topological object introduced by Atiyah
\cite{Atiyah}. 

In \S~\ref{section:EKT} and \S~\ref{section:EKT-S},
we prove our main technical results which yield decompositions of the
equivariant $K$-theory of filtrable schemes and that of the
$\sK$-theory of the associated Borel spaces. These results allow us
to prove the completion theorem for the torus action.
The equivariant $K$-theory of smooth schemes for the action of 
connected reductive groups is studied in \S~\ref{section:KBR}.
We define the Atiyah-Segal completion map and prove the completion theorem
in \S~\ref{section:ASM}. The following section deals with the completion
theorem for the Grothendieck group of equivariant bundles on all schemes.
In \S~\ref{section:Fail}, we compute the $K$-theory of the Borel spaces 
associated to some non-projective schemes. This allows us to show the failure 
of the completion theorem in such cases.

\section{Review of $\A^1$-homotopy theory of schemes}
\label{section:A1-homotopy}
In this section, we fix our notations and set up the machinery of 
$\A^1$-homotopy theory and the homotopy category of smooth schemes
as constructed in \cite{MV}. We shall later construct our motivic
Borel spaces as objects of this homotopy category.

\subsection{Notations and conventions}\label{subsection:Note}
Let $k$ be a field. 
We shall use \cite{MV} and \cite{Voev1} as our basic references
for $\A^1$-homotopy theory of smooth schemes over $k$.
We shall use the following notations throughout this text.

\begin{enumerate}
\item
${\rm Nis}/k :$ \ The Grothendieck site of smooth schemes of finite type over 
$k$ with the Nisnevich topology. 
\item
$\Spc(k) :$ \  The category of simplicial sheaves on ${\rm Nis}/k$
(also known as the category of motivic spaces).
\item
$\Spc_{\bullet}(k) :$ \  The category of pointed simplicial sheaves 
on ${\rm Nis}/k$ (also known as the category of pointed motivic spaces) .
\item
$\sH(k) :$ \ The unstable $\A^1$-homotopy category of motivic spaces.
\item
$\sH_{\bullet}(k) :$ \ The unstable $\A^1$-homotopy category of pointed 
motivic spaces.
\item
$\sS\sH(k) :$ \ The stable $\A^1$-homotopy category of pointed 
motivic spaces as defined, for example, in \cite{Voev1}. 
\end{enumerate}

Recall that a simplicial set (such as the usual simplicial set $\Delta[n]$)
is an object of $\Spc(k)$ as a constant simplicial sheaf. 
Let $T$ denote the pointed motivic space $(\P^1_k, \infty)$.
Recall that an object of $\sS\sH(k)$ is a $T$-spectrum over
$\Spc_{\bullet}(k)$. There is a functor $\Sigma^{\infty}_T: \sH_{\bullet}(k) \to
\sS\sH(k)$ which is given by $\Sigma^{\infty}_T(A) = \left(A , T \wedge A,
T^2 \wedge A, \cdots \right)$. This functor has a right adjoint which
gives the $0^{\rm th}$ level of a spectrum. In particular, given a motivic space 
({\sl e.g.},  a smooth scheme) $X$, we get 
$\Sigma^{\infty}_T X : = \Sigma^{\infty}_T(X_{+})$ in which
$X_{+}$ is obtained from $X$ by adjoining the base point $\Spec(k)$.
This gives functors $\sH(k) \xrightarrow{+} 
\sH_{\bullet}(k) \xrightarrow{\Sigma^{\infty}_T} \sS\sH(k)$.
Recall that $S_s$ and $S_t$ denote the pointed motivic spaces
$S^1$ and $(\G_m, 1)$, respectively and we define $\Sigma^{a, b} =
S^{a-b}_s \wedge S^b_t$.
Recall that $\sS\sH(k)$ is a triangulated category whose shift
functor is given by $S_s \wedge (-)$. 

\enlargethispage{50pt}

\subsection{Ind-objects and pro-objects in a category}
\label{subsection:IoS}
Given any category $\sC$, one can define the categories of pro-objects
(resp. ind-objects) in $\sC$. These objects consist of covariant 
(resp. contravariant)
functors from small cofiltering categories to $\sC$. A good exposition of this
can be found in \cite[\S~2]{Isaksen}.

In this text, we shall be concerned with only those pro and ind-objects which 
are indexed by the set of positive integers.
Thus, an ind-object $A = \{A_i, \alpha_i\}_{i \in \N^{+}}$ in a category $\sC$ is
a sequence $\{A_1 \xrightarrow{\alpha_1} A_2 \xrightarrow{\alpha_2} \cdots \}$ 
of objects in $\sC$. We shall denote such an ind-object by
the formal colimit $``{{\underset{i}\varinjlim}}" A_i$. 
The category of ind-objects
in $\sC$ will be denoted by ${\rm ind}\sC$.
A morphism $f: ``{{\underset{i}\varinjlim}}" A_i \to
``{{\underset{j}\varinjlim}}" B_j$ in ${\rm ind}\sC$ is an element of the set
${\underset{i}\varprojlim} {\underset{j}\varinjlim} \
\Hom_{\sC}(A_i, B_j)$. 

In particular, such a morphism $f$ is 
same as giving a function $\lambda: \N^{+} \to \N^{+}$ and a morphism
$f_i : A_i \to B_{\lambda(i)}$ in $\sC$ for each $i \ge 1$
such that for any $j \ge i$, there is some $l \ge \lambda(i), l \ge \lambda(j)$
so that the diagram
\begin{equation}\label{eqn:Ind-Obj}
\xymatrix@C2.9pc{
A_i \ar[d]_{p_{i, j}} \ar[r]^{f_i}  &  B_{\lambda(i)} \ar[dr]^{q_{\lambda(i), l}} \\
A_j \ar[r]_{f_j} & B_{\lambda(j)}  \ar[r]_{q_{\lambda(j), l}} & B_l}
\end{equation}
commutes in $\sC$. We shall call such a morphism to be {\sl strict} if
$\lambda$ is the identity function. 
If $\sC$ is closed under filtered colimit, there is a realization functor
${\varinjlim}(-): {\rm ind}\sC \to \sC$ which is right adjoint to the
canonical embedding $\sC \to {\rm ind}\sC$. 
If $\sC$ is an abelian category, then so is the category ${\rm ind}\sC$
({\sl cf.} \cite[Theorem~8.6.5]{KS}).

By dualizing the notion of 
ind-objects, we get the category of pro-objects ${\rm pro}\sC$ in
$\sC$. In particular, a pro-object $\{A_i, \alpha_i\}_{i \in \N^{+}}$
is a sequence $\{A_1 \xleftarrow{\alpha_1} A_2 \xleftarrow{\alpha_2} 
\cdots \}$ of objects in $\sC$. 
A pro-object will be formally denoted by 
$``{{\underset{i}\varprojlim}}" A_i$. 
A morphism $f: ``{{\underset{i}\varprojlim}}" A_i \to
``{{\underset{j}\varprojlim}}" B_j$ in ${\rm pro}\sC$ is an element of the set
${\underset{j}\varprojlim} {\underset{i}\varinjlim} \
\Hom_{\sC}(A_i, B_j)$. 
If $F: \sC \to \sD$ is a contravariant
functor, then it induces a functor ${\rm ind}\sC \to {\rm pro}\sD$. 
This obvious fact will be used frequently in this text.

If $\sC$ has cofiltered limit,
the limit of $``{{\underset{i}\varprojlim}}" A_i$ will be denoted by
$\underset{i}\varprojlim \ A_i$.   
If $\sC$ is an abelian category, then so is ${\rm pro}\sC$.
If $f : ``{{\underset{i}\varprojlim}}" A_i \to
``{{\underset{i}\varprojlim}}" B_i$ is a strict morphism, then one checks
easily that ${\rm Ker}(f) = ``{{\underset{i}\varprojlim}}" {\rm Ker}(f_i)$
and ${\rm Coker}(f) = ``{{\underset{i}\varprojlim}}" {\rm Coker}(f_i)$.
In particular, a sequence of strict morphisms of pro-objects 
\begin{equation}\label{eqn:pro-exact}
``{{\underset{i}\varprojlim}}" A_i \to
``{{\underset{i}\varprojlim}}" B_i \to
``{{\underset{i}\varprojlim}}" C_i 
\end{equation}
is exact in the abelian category ${\rm pro}\sC$ if it restricts to an exact 
sequence of objects in $\sC$ for each $i \in \N^{+}$.
We should warn that the exactness of ~\eqref{eqn:pro-exact} does not 
imply that the sequence 
remains exact if we replace $``{{\underset{i}\varprojlim}}"$ by 
$\underset{i}\varprojlim$. 
We refer the reader to \cite[Appendix~4]{AM} for these facts about pro-objects
in abelian categories.
\subsubsection{The Mittag-Leffler condition}
Let $R$ be a unital commutative ring and let $\sC$ be the category of 
$R$-modules. Recall that an inverse system $\{A_i, \alpha_i\}_{i \in \N^{+}}$ of
$R$-modules is said to satisfy the {\sl Mittag-Leffler} condition if
for every $i \ge 1$, there is $j \ge i$ such that
${\rm Image}(A_{j'} \to A_i) = {\rm Image}(A_j \to A_i)$ for all
$j' \ge j$. 

The functor
$\underset{i}\varprojlim: {\rm pro}\sC \to \sC$ is left exact 
and its right derived functor 
${\bf R}^1\underset{i}\varprojlim$ is often denoted by
$\underset{i}\varprojlim ^1$. 
The following well known sufficient condition
for the vanishing of $\underset{i}\varprojlim ^1 \ A_i$ will be
used frequently in this text.

\begin{prop}\label{prop:ML-lim-1}
Let $\{A_i, \alpha_i\}_{i \in \N^{+}}$ be an inverse system of $R$-modules
which satisfies the Mittag-Leffler condition. Then 
$\underset{i}\varprojlim ^1 \ A_i = 0$.
\end{prop}

\subsection{Group action and quotients}\label{subsection:Free}
For the rest of this text, we shall consider only those schemes which are
quasi-projective over $k$. The category of such schemes will be denoted by  
$\Sch_k$ and an object of this category will often be called a $k$-scheme.
Let $\Sm_k$ denote the subcategory of smooth schemes in $\Sch_k$.

A {\sl linear algebraic group} $G$ over $k$ will mean a smooth and affine 
group scheme over $k$. A closed subgroup of $G$ will mean a closed immersion  
of linear algebraic groups over $k$. Recall from
\cite[Proposition~1.10]{Borel} that a linear algebraic group over $k$ is a
closed subgroup of a general linear group, defined over $k$.
Let $\Sch^G_k$ (resp. $\Sm^G_k$) denote the category of quasi-projective 
(resp. smooth and quasi-projective) $k$-schemes with $G$-action and 
$G$-equivariant maps.
An object of $\Sch^G_k$ will be often called a $G$-scheme.

Recall that an action of a linear algebraic group $G$ on a $k$-scheme $X$ 
is said to be {\sl linear} if $X$ admits a 
$G$-equivariant ample line bundle, a condition which is always satisfied
if $X$ is normal ({\sl cf.} \cite[Theorem~2.5]{Sumihiro} for $G$ connected
and \cite[5.7]{Thomason1} for $G$ general). All $G$-actions in this paper
will be assumed to be linear.

For $X, Y \in \Sch^G_k$, let
$X \stackrel{G} {\times} Y$ denote the quotient of the scheme $X \times Y$
by the diagonal action of $G$. This quotient is only an {\'e}tale sheaf
on $\Sch_k$ in general and not a scheme. However, we shall need to know
in this text that the quotients of this kind exist as schemes in the 
following cases ({\sl cf.} \cite[Proposition~23]{EG}). We refer to
\cite[Lemma~2.1]{Krishna2} for a proof. 
\begin{lem}\label{lem:sch}
Let $H$ be a linear algebraic group acting freely and linearly on a 
$k$-scheme $U$ such that the quotient $U/H$ exists as a quasi-projective
scheme. Let $X$ be a $k$-scheme with a linear action of $H$.
Then the mixed quotient $X \stackrel{H} {\times} U$ for the 
diagonal action on $X \times U$ exists as a scheme and is quasi-projective.
Moreover, this quotient is smooth if both $U$ and $X$ are so.
In particular, if $H$ is a closed subgroup of a linear algebraic group $G$ 
and $X$ is a $k$-scheme with a linear action of $H$, then the quotient 
$G \stackrel{H} {\times} X$ is a quasi-projective scheme.
\end{lem}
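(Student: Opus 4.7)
The plan is to reduce the construction of the mixed quotient to the classical existence theorem for geometric quotients of free linear actions, and then use faithfully flat descent for the smoothness assertion. First I would observe that the diagonal $H$-action on $X \times U$ is automatically free, because it is already free on the second factor $U$. Thus what must be verified is that the diagonal action is linear and that a geometric quotient in the category of quasi-projective $k$-schemes exists.

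For linearity, I would note that by hypothesis $X$ carries an $H$-equivariant ample line bundle $L_X$, and the hypothesis that $U/H$ exists as a quasi-projective scheme, together with the standard descent argument, forces $U$ to admit an $H$-equivariant ample line bundle $L_U$ (one pulls an ample line bundle back from $U/H$, or equivalently uses that a free action with quasi-projective quotient is automatically linearizable). The external tensor product $L_X \boxtimes L_U$ is then an $H$-equivariant ample line bundle on $X \times U$, so the diagonal action is linear. Having a free linear action of $H$ on a quasi-projective scheme, the existence of the geometric quotient $X \stackrel{H}{\times} U$ as a quasi-projective scheme follows from the standard GIT-type theorem (this is exactly the content cited above from \cite[Proposition~23]{EG}, and ultimately goes back to Mumford's work on geometric quotients of principal bundles).

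For smoothness, the quotient map $\pi \colon X \times U \to X \stackrel{H}{\times} U$ is a principal $H$-bundle for the fppf (in fact \'etale, since $H$ is smooth) topology. In particular $\pi$ is faithfully flat and of finite presentation, with smooth geometric fibres. Hence if $X$ and $U$ are both smooth, so is $X \times U$, and smoothness descends along the faithfully flat morphism $\pi$, giving smoothness of $X \stackrel{H}{\times} U$.

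For the final ``in particular'' clause, I would apply the main assertion with $U = G$, where $H$ acts freely on $G$ by right translation. That this action is linear and that the quotient $G/H$ exists as a quasi-projective (even smooth) $k$-scheme is a classical result of Chevalley, so the hypotheses of the first part are satisfied and the quotient $G \stackrel{H}{\times} X$ is quasi-projective. The main obstacle in this proof is really book-keeping of the linearity condition: one needs to be careful that the hypothesis of a linear $H$-action on both factors genuinely produces an $H$-equivariant ample bundle on the product, and that the cited existence theorem for quotients applies without additional normality assumptions; modulo these points, the argument is a routine application of descent and GIT.
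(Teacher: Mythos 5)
Your proposal is correct and follows essentially the same route as the paper: existence and quasi-projectivity of $X \stackrel{H}{\times} U$ are reduced to Edidin--Graham's Proposition~23 (which rests on Proposition~7.1 of GIT), smoothness is obtained from the fact that $X \times U \to X \stackrel{H}{\times} U$ is an $H$-torsor (a smooth surjection, along which geometric regularity descends), and the final clause is handled by taking $U = G$ and using that $G/H$ is smooth quasi-projective. The only wording to tighten is your summary ``free linear action on a quasi-projective scheme, hence the geometric quotient exists'': the cited quotient theorem really uses the principal-bundle quotient $U \to U/H$ together with the $H$-linearized ample bundle on $X$ (freeness plus linearity alone does not guarantee a quasi-projective geometric quotient), but since the existence of $U/H$ is part of your hypotheses this is a matter of phrasing, not a gap.
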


In this text, $\Sch^G_{{free}/k}$ will denote the full subcategory of $\Sch^G_k$
whose objects are those schemes $X$ on which $G$ acts freely such that
the quotient $X/G$ exists and is quasi-projective over $k$. 
The full subcategory of $\Sch^G_{{free}/k}$ consisting of smooth schemes will
be denoted by $\Sm^G_{{free}/k}$. The previous
result shows that if $U \in \Sch^G_{{free}/k}$, then $X \times U$
is also in $\Sch^G_{{free}/k}$ for every $G$-scheme $X$.

\section{Motivic Borel spaces}\label{section:MBS}
In this and the next section, we shall construct the two missing objects: the
motivic Borel spaces and the $\sK$-theory of Borel spaces. Together with
the equivariant algebraic $K$-theory, they form the main players in the
algebraic Atiyah-Segal completion problem.
We begin with the construction of the Motivic Borel spaces.
Some of these Borel type constructions can also be found in
\cite[Chapter~5]{Herman} (see also \cite{Deligne}, \cite{HKO}, and \cite{Tan}).

\subsection{Admissible gadgets}\label{subsection:Agdt}
Let $G$ be a linear algebraic group over $k$. 
All representations of $G$ in this text will be assumed to be
finite-dimensional. We shall say that a pair $(V,U)$ of smooth schemes over $k$
is a {\sl good pair} for $G$ if $V$ is a $k$-rational representation of $G$ and
$U \subsetneq V$ is a $G$-invariant open subset which is an object of
$\Sch^G_{{free}/k}$. It is known ({\sl cf.} \cite[Remark~1.4]{Totaro1}) that a 
good pair for $G$ always exists.

\begin{defn}\label{defn:Add-Gad}
A sequence of pairs $\rho = {\left(V_i, U_i\right)}_{i \ge 1}$ of 
$k$-schemes is called an {\sl admissible gadget} for $G$, if there exists a
good pair $(V,U)$ for $G$ such that $V_i = V^{\oplus i}$ and $U_i \subsetneq V_i$
is $G$-invariant open such that the following hold for each $i \ge 1$.
\begin{enumerate}
\item
$\left(U_i \oplus V\right) \cup \left(V \oplus U_i\right)
\subseteq U_{i+1}$ as $G$-invariant open subsets.
\item
$\codim_{U_{i+2}}\left(U_{i+2} \setminus 
\left(U_{i+1} \oplus V\right)\right) > 
\codim_{U_{i+1}}\left(U_{i+1} \setminus \left(U_{i} \oplus V\right)\right)$.
\item
$\codim_{V_{i+1}}\left(V_{i+1} \setminus U_{i+1}\right)
> \codim_{V_i}\left(V_i \setminus U_i\right)$.
\item
$U_i \in \Sm^G_{{free}/k}$.
\end{enumerate}
\end{defn}

The above definition is a variant of the  notion
of admissible gadgets in \cite[\S 4.2]{MV}, where these terms are
defined for vector bundles over a scheme. 
An example of an admissible gadget for $G$ can be constructed as follows. 
Choose a faithful $k$-rational representation $W$ of $G$ of dimension $n$. Then 
$G$ acts freely on an open subset $U$ of 
$V = W^{\oplus n} \simeq {\rm End}_k(W)$. 
Let $Z = V \setminus U$.
We now take 
$V_i = V^{\oplus i}, U_1 = U$ and  $U_{i+1} = \left(U_i \oplus V\right) \cup
\left(V \oplus U_i\right)$ for $i \ge 1$. Setting 
$Z_1 = Z$ and $Z_{i+1} = U_{i+1} \setminus \left(U_i \oplus V\right)$ for 
$i \ge 1$, one checks that $V_i \setminus U_i = Z^i$ and
$Z_{i+1} = Z^i \oplus U$.
In particular, $\codim_{V_i}\left(V_i \setminus U_i\right) =
i (\codim_V(Z))$ and 
$\codim_{U_{i+1}}\left(Z_{i+1}\right) = (i+1)d - i(\dim(Z))- d =  
i (\codim_V(Z))$,
where $d = \dim(V)$. Moreover, $U_i \to {U_i}/G$ is a principal $G$-bundle
of smooth schemes.

Given an {\sl admissible gadget} $\rho$ for $G$ and $X \in \Sm^G_k$, let
$X^i_G(\rho)$ denote the mixed quotient space $X \stackrel{G} {\times} U_i$.
We shall often write $X^i_G(\rho)$ as simply $X^i(\rho)$ if the underlying
group $G$ is understood in a context. 
The locally closed immersion $U_i = U_i \times \{0\} \inj U_{i+1}$ yields an
ind-object $``{{\underset{i}\varinjlim}}" X^i(\rho)$ in
$\Sm_k$. Such an object will also be called an ind-scheme.
Since $\Spc(k)$ admits all filtered colimits, we see that
$X_G(\rho) = X(\rho) := {\underset{i}\varinjlim} \ X^i(\rho)$
is a motivic space. Our construction of motivic Borel spaces is based on 
the following result.

\begin{prop}\label{prop:Rep-ind}
Let  $\rho$ and $\rho'$ be two admissible gadgets for $G$. Given any
$X \in \Sm^G_k$, there is a canonical motivic weak equivalence $X(\rho) 
\simeq X(\rho')$.
\end{prop} 
\begin{proof}
This was proven by Morel-Voevodsky \cite[Proposition~4.2.6]{MV}
when $X = \Spec(k)$ and a similar argument works in the general case as well.

For $i, j \ge 1$, we consider the smooth scheme $\sV_{i,j} =
{\left(X \times U_i \times V'_j\right)}/G$ and the open 
subscheme $\sU_{i,j} = {\left(X \times U_i \times U'_j\right)}/G$.
For a fixed $i \ge 1$, this yields a sequence ${\left(\sV_{i,j}, 
\sU_{i,j}, f_{i,j}\right)}_{j \ge 1}$, where $\sV_{i,j} \xrightarrow{\pi_{i,j}}
X^i(\rho)$ is a vector bundle, $\sU_{i,j} \subseteq \sV_{i,j}$ is an open
subscheme of this vector bundle and $f_{i,j} : \left(\sV_{i,j}, \sU_{i,j}\right)
\to \left(\sV_{i,j+1}, \sU_{i,j+1}\right)$ is the natural map of pairs
of smooth schemes over $X^i(\rho)$. Then ${\left(\sV_{i,j}, 
\sU_{i,j}, f_{i,j}\right)}_{j \ge 1}$ is an admissible gadget over 
$X^i(\rho)$ in the sense of \cite[Definition~4.2.1]{MV}.
Setting $\sU_{i} = \colim_j \ \sU_{i,j}$ and $\pi_i = \colim_j \ \pi_{i,j}$, it 
follows from [{\sl loc. cit.}, Proposition~4.2.3] that the map
$\sU_i  \xrightarrow{\pi_i} X^i(\rho)$ is an $\A^1$-weak equivalence. 

Taking the colimit of these maps as $i \to \infty$ and using [{\sl loc. cit.},
Corollary~1.1.21], we conclude that the map
$\sU \xrightarrow{\pi}  X(\rho)$ is an $\A^1$-weak equivalence,
where $\sU = \colim_{i, j} \ \sU_{i,j}$. Reversing the roles of $\rho$ and
$\rho'$, we find that the obvious map $\sU \xrightarrow{\pi'}  X(\rho')$ is 
also an $\A^1$-weak equivalence. This yields the canonical
isomorphism $\pi' \circ \pi^{-1} :X(\rho) \xrightarrow{\simeq}
X(\rho')$ in $\sH(k)$.
\end{proof}

\subsubsection{Admissible gadgets associated to a given $G$-scheme}
\label{subsubsection:Add-Sc}
A careful reader may have observed in the proof of 
Proposition~\ref{prop:Rep-ind} 
that we did not really use the fact that $G$ acts freely on the open subset 
$U_i$ (resp. $U'_j$) of the $G$-representation $V_i$ (resp. $V'_j$). One only 
needs to know that for each $i, j \ge 1$, the quotients $(X \times U_i)/G$
and $(X \times U'_j)/G$ are smooth schemes and the maps
${\left(X \times U_i \times V'_j\right)}/G \to (X \times U_i)/G$ and
${\left(X \times V_i \times U'_j\right)}/G \to (X \times U'_j)/G$
are vector bundles with appropriate properties.
This observation leads us to the following variant of
Proposition~\ref{prop:Rep-ind} which will be useful for us on some
occasions.

Let $G$ be a linear algebraic group over $k$ and let $X \in \Sch^G_k$.
We shall say that a pair $(V,U)$ of smooth schemes over $k$
is a good pair for the $G$-action on $X$, if $V$ is a $k$-rational 
representation of $G$ and $U \subseteq V$ is a $G$-invariant open subset such
that $X \times U$ is an object of $\Sch^G_{{free}/k}$.
We shall say that a sequence of pairs 
$\rho = {\left(V_i, U_i\right)}_{i \ge 1}$ of smooth schemes over $k$
is an {\sl admissible gadget for the $G$-action on $X$}, if there exists a
good pair $(V,U)$ for the $G$-action on $X$ such that $V_i = V^{\oplus i}$ and 
$U_i \subseteq V_i$ is $G$-invariant open subset such that the following hold 
for each $i \ge 1$.
\begin{enumerate}
\item
$\left(U_i \oplus V\right) \cup \left(V \oplus U_i\right)
\subseteq U_{i+1}$ as $G$-invariant open subsets.
\item
$\codim_{U_{i+2}}\left(U_{i+2} \setminus 
\left(U_{i+1} \oplus V\right)\right) > 
\codim_{U_{i+1}}\left(U_{i+1} \setminus \left(U_{i} \oplus V\right)\right)$.
\item
$\codim_{V_{i+1}}\left(V_{i+1} \setminus U_{i+1}\right)
> \codim_{V_i}\left(V_i \setminus U_i\right)$.
\item
$X \times U_i \in \Sch^G_{{free}/k}$.
\end{enumerate}

Notice that an admissible gadget for $G$ as in 
Definition~\ref{defn:Add-Gad} is an admissible gadget
for the $G$-action on every $G$-scheme $X$.

\begin{prop}\label{prop:Rep-ind-V}
Let $\rho_X$ and $\rho'_X$ be two admissible gadgets for the $G$-action on 
a smooth scheme $X$.
Then there is a canonical motivic weak equivalence 
\[
{\underset{i}\varinjlim} \
(X\stackrel{G}{\times}U_i) \ \simeq \
{\underset{j}\varinjlim}  \ (X\stackrel{G}{\times}U'_j).
\] 
\end{prop}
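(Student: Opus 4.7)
The plan is to follow essentially verbatim the template of the proof of Proposition~\ref{prop:Rep-ind}, with the role of $U_i$ there (on which $G$ acts freely) replaced by the product $X \times U_i$, on which $G$ acts freely by condition~(4) of the admissible gadget for the $G$-action on $X$. The author's preamble to \S\ref{subsubsection:Add-Sc} already signals that this should suffice: the proof of Proposition~\ref{prop:Rep-ind} only uses the freeness hypothesis through the existence of smooth mixed quotients of the form $(X \times U_i \times V'_j)/G$ and through the vector-bundle structure of the projections onto the smaller factor, both of which depend only on the freeness of the diagonal $G$-action on $X \times U_i$ (and, symmetrically, on $X \times U'_j$).

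Concretely, for each $i,j \ge 1$ I would form the mixed quotients
\[
\sV_{i,j} := (X \times U_i \times V'_j)/G, \qquad
\sU_{i,j} := (X \times U_i \times U'_j)/G,
\]
which exist as smooth quasi-projective schemes by Lemma~\ref{lem:sch}, since the diagonal $G$-action on $X \times U_i \times V'_j$ is free. Descending the trivial vector bundle $X \times U_i \times V'_j \to X \times U_i$ along the principal $G$-bundle $X \times U_i \to X^i_G(\rho_X)$ exhibits $\sV_{i,j} \to X^i_G(\rho_X)$ as a vector bundle with $\sU_{i,j} \subset \sV_{i,j}$ a $G$-invariant open whose complement has codimension equal to $\codim_{V'_j}(V'_j \setminus U'_j)$. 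For fixed $i$, the family $\{(\sV_{i,j}, \sU_{i,j})\}_{j\ge 1}$ is then an admissible gadget over $X^i_G(\rho_X)$ in the sense of \cite[Definition~4.2.1]{MV}, so by \cite[Proposition~4.2.3]{MV} the natural map $\colim_j \sU_{i,j} \to X^i_G(\rho_X)$ is an $\A^1$-weak equivalence. Passing to the colimit over $i$ and applying \cite[Corollary~1.1.21]{MV}, the induced map
\[
\sU := \colim_{i,j} \sU_{i,j} \ \longrightarrow \ \colim_i X^i_G(\rho_X)
\]
is an $\A^1$-weak equivalence. The construction is manifestly symmetric: fixing $j$ and letting $i$ vary, the family $\sV'_{j,i} := (X \times V_i \times U'_j)/G \to (X \times U'_j)/G$ is an admissible gadget over $(X \times U'_j)/G$ containing the same open subschemes $\sU_{i,j}$, and the analogous argument produces an $\A^1$-weak equivalence $\sU \to \colim_j (X \stackrel{G}{\times} U'_j)$. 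Composing the two equivalences in $\sH(k)$ yields the claimed canonical isomorphism.

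The main obstacle is verifying that the Morel--Voevodsky machinery of admissible gadgets over a base scheme really does remain available under the weaker freeness assumption of \S\ref{subsubsection:Add-Sc}. Specifically, one must check (i) that the mixed quotients $\sV_{i,j}$, $\sU_{i,j}$, and $\sV'_{j,i}$ are smooth schemes, (ii) that the projections down to $X^i_G(\rho_X)$ and to $(X \times U'_j)/G$ are genuine Zariski-locally trivial vector bundles (so that the $\A^1$-contractibility input used in the proof of \cite[Proposition~4.2.3]{MV} applies), and (iii) that the codimension inequalities transfer from the original gadgets $\rho_X$ and $\rho'_X$ to the corresponding gadgets over the new bases. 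All three points reduce to Lemma~\ref{lem:sch} and to faithfully flat descent along the principal $G$-bundles $X \times U_i \to X^i_G(\rho_X)$ and $X \times U'_j \to (X \times U'_j)/G$, after which the rest of the argument is formal.
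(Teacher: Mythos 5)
Your proposal is correct and is essentially the paper's own argument: the paper offers no separate proof of this proposition, precisely because (as explained in \S\ref{subsubsection:Add-Sc}) the proof of Proposition~\ref{prop:Rep-ind} only uses freeness through the existence of the smooth mixed quotients $(X\times U_i\times V'_j)/G$ and the vector-bundle projections, which is exactly the adaptation you carry out. Your additional checks (smoothness via Lemma~\ref{lem:sch}, descent of the vector-bundle structure, transfer of the codimension conditions) are the right points to verify and match the paper's intent.
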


\begin{defn}\label{defn:Borel-Spc}
Given $X \in \Sm^G_k$,  the associated {\sl motivic Borel space}
is a motivic space $X_G$ of the form
$X_G(\rho)$, where $\rho = (V_i, U_i)_{i \ge 1}$ is an admissible
gadget for $G$. It follows from Proposition~\ref{prop:Rep-ind} that
$X_G$ is a well-defined object of $\sH(k)$. 

The motivic Borel space associated to $X = \Spec(k)$ will be called the 
{\sl classifying space} of $G$ and will be denoted by $B_G$. 
\end{defn}

\subsection{Morita equivalence for Borel spaces}\label{subsection:MOR}
The motivic Borel spaces corresponding to different algebraic groups
satisfy the following $\A^1$-homotopy version of the Morita equivalence.

\begin{prop}\label{prop:Morita0}
Let $H$ be a closed normal subgroup of a linear algebraic group $G$ and let
$F = G/H$. Let $f: X \to Y$ be a morphism in $\Sm^G_k$ 
which is an $H$-torsor for the restricted action. Then there is an
isomorphism $X_G \simeq Y_F$ in $\sH(k)$.
\end{prop}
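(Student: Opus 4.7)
My plan is to pull back an admissible gadget for $F$ along the quotient $G \twoheadrightarrow F$ to obtain an admissible gadget for the $G$-action on $X$ (in the sense of Section 2.3.1), and then observe that the mixed quotients in $G$ and $F$ already coincide at each finite level. The comparison in $\sH(k)$ is then supplied by the invariance statement of Proposition~\ref{prop:Rep-ind-V}.

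Concretely, I would choose an admissible gadget $\rho' = (V'_j, U'_j)_{j \ge 1}$ for $F$ coming from a good pair $(V', U')$ for $F$, and view each $V'_j = (V')^{\oplus j}$ as a $G$-representation via $G \twoheadrightarrow F$; then $U'_j \subseteq V'_j$ is automatically $G$-invariant. The codimension conditions (1)--(3) in the definition of admissible gadget are inherited from the corresponding facts for the $F$-action, so the only point to verify is that $X \times U'_j \in \Sch^G_{{free}/k}$. Freeness of the diagonal $G$-action on $X \times U'_j$ holds in two steps: if $g(x,u) = (x,u)$, then $[g] \in F$ fixes $u \in U'_j$, forcing $[g] = e$ because $F$ acts freely on $U'_j$, whence $g \in H$; and then $gx = x$ with $g \in H$ acting freely on $X$ (since $X \to Y$ is an $H$-torsor) forces $g = e$. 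Quasi-projectivity of the quotient is obtained by applying Lemma~\ref{lem:sch} in two stages: first $(X \times U'_j)/H = Y \times U'_j$ since $H$ acts trivially on $U'_j$ and $X/H = Y$, and then $(Y \times U'_j)/F$ is quasi-projective by Lemma~\ref{lem:sch} applied to the free linear $F$-action on $U'_j$ with quasi-projective quotient $U'_j/F$.

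The very same two-step quotient computation yields the scheme-level identity
\[
X \stackrel{G}{\times} U'_j \;=\; \bigl((X \times U'_j)/H\bigr)\big/F \;=\; (Y \times U'_j)/F \;=\; Y \stackrel{F}{\times} U'_j
\]
for every $j \ge 1$. Passing to the colimit in $j$, the right-hand side is $Y_F(\rho') = Y_F$ by the definition of the motivic Borel space, while Proposition~\ref{prop:Rep-ind-V}, applied to $X$ and the admissible gadget $\rho'$ for the $G$-action on $X$, identifies the left-hand side canonically with $X_G$ in $\sH(k)$ (any ordinary admissible gadget for $G$ is also an admissible gadget for the $G$-action on $X$, as remarked after Definition~\ref{defn:Add-Gad}). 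Chaining these isomorphisms produces the desired $X_G \cong Y_F$ in $\sH(k)$.

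The only genuinely delicate point — and the reason one cannot invoke the original definition of admissible gadget for $G$ — is that when $V'_j$ is regarded as a $G$-representation, $H$ acts trivially on it, so $G$ does not act freely on $U'_j$ itself. This is precisely where the freeness of the $H$-action on $X$ compensates, and why the more general notion of admissible gadget for the $G$-action on $X$ introduced in Section~\ref{subsubsection:Add-Sc}, together with Proposition~\ref{prop:Rep-ind-V}, is the correct framework for making the argument go through.
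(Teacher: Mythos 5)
Your proposal is correct and follows essentially the same route as the paper: pull the admissible gadget for $F$ back along $G \twoheadrightarrow F$, use freeness of $H$ on $X$ and of $F$ on the $U'_j$ to see it is an admissible gadget for the $G$-action on $X$, identify $X \stackrel{G}{\times} U'_j$ with $Y \stackrel{F}{\times} U'_j$ at each finite level, and conclude via Proposition~\ref{prop:Rep-ind-V}. The extra details you supply (the freeness check and the two-step quotient argument via Lemma~\ref{lem:sch}) are exactly the points the paper leaves implicit.
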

\begin{proof}
We first observe from \cite[Corollary~12.2.2]{Springer} that
$F$ is also a linear algebraic group over the given ground field $k$.
Let $\rho = (V_i, U_i)_{i \ge 1}$ be an admissible gadget for $F$. The 
morphism $G \to F$ makes each $V_i$ a $k$-rational representation
of $G$ such that the open subset $U_i$ is $G$-invariant,
even though $G$ may not act freely on $U_i$.
In particular, $G$ acts on the product $X \times U_i$ via the diagonal
action.
Since $H$ acts freely on $X$ and $F$ acts freely on $U_i$, 
it follows that the map $X \times U_i \to X \stackrel{G}{\times} U_i$
is a $G$-torsor and hence $\rho =(V_i, U_i)_{i \ge 1}$ is an admissible gadget
for the $G$-action on $X$.

Since the map $X \stackrel{G}{\times} U_i \to 
Y \stackrel{F}{\times} U_i$ is an isomorphism for every $i \ge 1$,
we conclude from Proposition~\ref{prop:Rep-ind-V} that
$X_G \simeq \colim_i \ (X\stackrel{G}{\times}U_i) 
\xrightarrow{\simeq} Y_F$ in $\sH(k)$.
\end{proof}

\begin{cor}[Morita isomorphism]\label{cor:Morita1}
Let $H$ be a closed subgroup of a linear algebraic group $G$ and let
$X \in \Sm^H_k$. Let $Y$ denote the space $X \stackrel{H}{\times} G$
for the action $h \cdot (x, g) = (h\cdot x, gh^{-1})$. 
Then there is an isomorphism $X_H \simeq Y_G$ in $\sH(k)$.
\end{cor}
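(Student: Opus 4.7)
The plan is to reduce to the variant of Proposition~\ref{prop:Rep-ind-V} by producing a single admissible gadget that computes both $X_H$ and $Y_G$ and identifying the level-$i$ mixed quotients scheme-theoretically. Take an admissible gadget $\rho = (V_i, U_i)_{i \ge 1}$ for $G$ in the sense of Definition~\ref{defn:Add-Gad}. By restriction of scalars along $H \inj G$, each $V_i$ becomes an $H$-representation and each $U_i$ a $G$-invariant (hence $H$-invariant) open subset on which $H$ acts freely. Since $G/H$ is a smooth quasi-projective scheme (\cite[Theorem~6.8]{Borel}), the map $U_i/H \to U_i/G$ is a Zariski-locally trivial $G/H$-bundle, so $U_i/H$ is quasi-projective; together with Lemma~\ref{lem:sch} applied to $X$ acted on by $H$, this shows $X \times U_i \in \Sch^H_{{free}/k}$. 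The remaining conditions (1)--(3) of an admissible gadget are purely geometric and survive restriction, so $\rho$ is an admissible gadget for the $H$-action on $X$ in the sense of \S\ref{subsubsection:Add-Sc}.

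Next, I would establish for every $i \ge 1$ a canonical isomorphism of quasi-projective $k$-schemes
\[
Y \stackrel{G}{\times} U_i \;\cong\; X \stackrel{H}{\times} U_i.
\]
Unwinding, $Y \stackrel{G}{\times} U_i = (X \times G \times U_i)/(H \times G)$ with $(h,g_0) \cdot (x, g, u) = (hx,\, g_0 g h^{-1},\, g_0 u)$. Taking the $G$-quotient first, the map $[g, u] \mapsto g^{-1} u$ identifies $G \stackrel{G}{\times} U_i$ with $U_i$, and a direct check shows that the residual $H$-action on $X \times U_i$ becomes the diagonal one $h \cdot (x, u) = (hx, hu)$; quotienting by $H$ then yields $X \stackrel{H}{\times} U_i$. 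These isomorphisms are natural in $i$, since the transition maps on both sides are induced by the same closed inclusions $U_i \inj U_{i+1}$ of $(H \times G)$-schemes.

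Assembling the level-$i$ isomorphisms into a single isomorphism of colimits and invoking Proposition~\ref{prop:Rep-ind-V} on both sides to identify these colimits with $Y_G$ and $X_H$ in $\sH(k)$, we obtain the desired isomorphism $X_H \cong Y_G$. The main obstacle is the careful tracking of the two commuting actions through the iterated-quotient identification in the second paragraph, together with ensuring the restricted gadget $\rho$ really satisfies condition (4); once these are in place, the rest is formal given the machinery already developed in Proposition~\ref{prop:Rep-ind-V} and Lemma~\ref{lem:sch}.
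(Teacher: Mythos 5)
Your proposal is correct in substance, but it takes a more direct route than the paper. The paper deduces the corollary from Proposition~\ref{prop:Morita0} applied twice: it introduces the auxiliary group $H \times G$ acting on $X \times G$ by $(h,g)\cdot(x,g') = (hx, gg'h^{-1})$, notes that $X \times G \to X$ is a $G$-torsor and $X \times G \to X \stackrel{H}{\times} G$ is an $H$-torsor, and obtains $X_H \cong (X\times G)_{H\times G} \cong Y_G$. You instead work directly with the non-normal inclusion $H \subseteq G$: you restrict an admissible gadget for $G$ to an admissible gadget for the $H$-action on $X$, identify the finite-level approximations $Y \stackrel{G}{\times} U_i \cong X \stackrel{H}{\times} U_i$ by explicitly untwisting the $G$-factor via $[x,g,u] \mapsto [x, g^{-1}u]$ (your computation of the residual diagonal $H$-action is correct), and then invoke Proposition~\ref{prop:Rep-ind-V}. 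This is structurally the same idea as the paper's proof of Proposition~\ref{prop:Morita0} itself, but applied once to the corollary's situation, so you avoid the doubling trick and the intermediate space $(X \times G)_{H\times G}$ at the cost of the explicit identification and of verifying condition (4) for the restricted gadget. Both arguments ultimately rest on Lemma~\ref{lem:sch} and Proposition~\ref{prop:Rep-ind-V}, so the approaches are of comparable strength; yours is arguably more transparent, the paper's is more modular (Proposition~\ref{prop:Morita0} is reused elsewhere).

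One justification in your first paragraph is faulty, though the conclusion you need is true and provable with the paper's tools: the map $U_i/H \to U_i/G$ is in general only {\'e}tale- (or fppf-) locally trivial, not Zariski-locally trivial, since the torsor $U_i \to U_i/G$ need not be Zariski-locally trivial; and even granting local triviality, quasi-projectivity of the total space is not automatic. The correct argument is to observe that $U_i/H \cong (G/H)\stackrel{G}{\times} U_i$ and to apply Lemma~\ref{lem:sch} with the group $G$ acting freely on $U_i$ (with quasi-projective quotient $U_i/G$) and with the smooth quasi-projective, hence normal, $G$-scheme $G/H$; this gives existence and quasi-projectivity of $U_i/H$, after which your application of Lemma~\ref{lem:sch} to the $H$-scheme $X$ yields $X \times U_i \in \Sch^H_{{free}/k}$ as desired. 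With this repair the rest of your argument goes through.
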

\begin{proof}
Define an action of $H \times G$ on $X \times G$ by
\begin{equation}\label{eqn:MoritaI**1}
(h, g) \cdot (x, g') = \left(hx, gg'h^{-1}\right)
\end{equation}
and an action of $H \times G$ on $X$ by $(h, g) \cdot x = hx$. 
Then the projection map $X \times G \xrightarrow{p} X$ is 
$\left(H \times G\right)$-equivariant and a 
$G$-torsor. Hence there is an isomorphism $X_H \simeq
(X \times G)_{H \times G}$ in $\sH(k)$ by Proposition~\ref{prop:Morita0}.

On the other hand, the projection map $X \times G 
\to X \stackrel{H}{\times} G$ is $\left(H \times G\right)$-equivariant and 
an $H$-torsor. Hence there is an isomorphism 
$(X \times G)_{H \times G} \simeq Y_G$ in $\sH(k)$ again by 
Proposition~\ref{prop:Morita0}. Combining these two isomorphisms, we get
$X_H \simeq Y_G$ in $\sH(k)$.
\end{proof}

Recall that a unipotent group $U$ over $k$ is called {\sl split} if it
has a filtration 
$\{e\} = U_0 \subseteq U_1 \subseteq \cdots \subseteq 
U_n = U$ by closed normal $k$-subgroups such that each
quotient group ${U_j}/{U_{j-1}}$ is isomorphic to the algebraic group $\G_a$
({\sl cf.} \cite[\S~3.4]{Springer}).

\begin{prop}\label{prop:Unip}
Let $G$ be a possibly non-reductive group over $k$.
Assume that $G$ has a Levi decomposition $G = L \ltimes G^u$ such
that $G^u$ is split over $k$ (e.g., if $k$ has characteristic zero).
Then for any $X \in \Sm^G_k$, the map $X_L \to X_G$ is an isomorphism
in $\sH(k)$.
\end{prop}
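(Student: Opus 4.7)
The plan is to exhibit $X_L \to X_G$ as a filtered colimit of maps of smooth schemes, each of which is an iterated affine bundle, and then invoke $\A^1$-invariance of affine bundles.

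First, I would fix an admissible gadget $\rho = (V_i, U_i)_{i \ge 1}$ for $G$. Since $L \subset G$ is a closed subgroup, the $G$-free action on $U_i$ restricts to an $L$-free action, and the codimension conditions in Definition~\ref{defn:Add-Gad} depend only on the underlying schemes, so $\rho$ is simultaneously an admissible gadget for $L$. By Proposition~\ref{prop:Rep-ind-V}, $X_L \cong \colim_i X^i_L$ and $X_G \cong \colim_i X^i_G$ in $\sH(k)$, where $X^i_L := X \stackrel{L}{\times} U_i$ and $X^i_G := X \stackrel{G}{\times} U_i$. Since filtered colimits preserve $\A^1$-weak equivalences by \cite[Corollary~1.1.21]{MV}, it suffices to prove that for each $i \ge 1$, the natural map $\pi_i : X^i_L \to X^i_G$ is an $\A^1$-weak equivalence of smooth schemes.

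Next, using that $G^u$ is split over $k$, by refining a characteristic series of $G^u$ (for instance the derived series, which is $L$-stable since it is characteristic) I would select an $L$-stable filtration
\[
1 = N_0 \subsetneq N_1 \subsetneq \cdots \subsetneq N_r = G^u
\]
by closed subgroups of $G^u$ with each quotient $N_j/N_{j-1}$ an elementary unipotent (vector) group $\G_a^{m_j}$. Each $N_j$ is then normal in $G = L \ltimes G^u$, being normal in $G^u$ and $L$-stable, so $H_j := L \cdot N_j$ is a closed subgroup of $G$. This yields a tower of closed subgroups $L = H_0 \subseteq H_1 \subseteq \cdots \subseteq H_r = G$ and a factorization
\[
X^i_L = X^i_{H_0} \to X^i_{H_1} \to \cdots \to X^i_{H_r} = X^i_G,
\]
so it is enough to show that each map $X^i_{H_{j-1}} \to X^i_{H_j}$ is an $\A^1$-weak equivalence.

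For each $j$, I would identify $X^i_{H_{j-1}}$ with the associated bundle $(X \times U_i) \stackrel{H_j}{\times} (H_j/H_{j-1})$ over $X^i_{H_j}$, using the argument in the proof of Lemma~\ref{lem:elem}. Since $L \cap G^u = \{1\}$ in the Levi decomposition, the inclusion $N_j \hookrightarrow H_j$ descends to a variety isomorphism $N_j/N_{j-1} \xrightarrow{\cong} H_j/H_{j-1}$; thus $H_j/H_{j-1} \cong \A^{m_j}$, and $H_j$ acts on it by affine transformations (translation by the $N_j$-component composed with the $L$-representation $\rho : L \to \GL_{m_j}$ on $N_j/N_{j-1}$). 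Hence $\pi_i$ at the $j$-th step is a Zariski-locally trivial affine $\A^{m_j}$-bundle over the smooth scheme $X^i_{H_j}$, and such bundles are $\A^1$-weak equivalences by the standard result (cf.\ \cite[\S4, Proposition~2.3]{MV}). The main obstacle is the middle step: arranging the $L$-stable, $G$-normal filtration of $G^u$ with vector group quotients, and then verifying that each successive map really has an affine-bundle structure with Zariski-local trivializations---an issue because $H_{j-1}$ need not be normal in $H_j$ when $L$ acts nontrivially on $N_j/N_{j-1}$, so the coset space $H_j/H_{j-1}$ must be handled as a homogeneous $H_j$-variety rather than a quotient group.
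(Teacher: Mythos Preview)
Your proposal is correct and follows the same route as the paper: factor through the chain $X_L = X_{H_0} \to \cdots \to X_{H_r} = X_G$ induced by a normal series of the split unipotent radical, and argue that each step is an $\A^1$-weak equivalence. You supply considerably more detail than the paper's terse argument (including the $L$-stability of the filtration, which the paper leaves implicit); for the Zariski-local triviality you flag as the main obstacle, note that since $N_{j-1}$ is normal in $H_j$, the map $X^i_{H_{j-1}} \to X^i_{H_j}$ is in fact a torsor under the \emph{vector bundle} $(X\times U_i)\times^{H_j}(N_j/N_{j-1})$ over $X^i_{H_j}$ (with $H_j$ acting linearly through $H_j/N_j \cong L$), and torsors under vector bundles are Zariski-locally trivial.
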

\begin{proof}
Let us denote the unipotent radical $G^u$ by $U$ and
let $\{e\} = U_0 \subseteq U_1 \subseteq \cdots \subseteq U_n = U$
the filtration of $U$ as above. Set $G_i = LU_i$ for $0 \le i \le n$.

We have a sequence of morphisms $X_L \xrightarrow{p_0} X_{G_1} 
\xrightarrow{p_1} \cdots \xrightarrow{p_{n-1}} X_G$ in ${\bf {Spc}}$ such that 
each $p_i$ is an $\A^1$-weak equivalence. Hence the map
$X_L \to X_G$ is an $\A^1$-weak equivalence. 
\end{proof}

\section{Algebraic $\sK$-theory of Borel spaces}
\label{section:IOS}
Having defined the motivic Borel spaces, our next objective is to define
the $K$-theory and $\sK$-theory of these spaces. In order to do so,
we need the following isomorphism of two ind-objects in the stable
homotopy category. Recall from \S~\ref{subsection:Note} that for 
$X \in \Spc(k)$, $\Sigma^{\infty}_T X$ denotes the $T$-spectrum 
$\Sigma^{\infty}_T(X_{+})$.

\begin{prop}\label{prop:Stable-Ind}
Let $\rho = \left(V_i, U_i\right)_{i \ge 1}$ and 
$\rho' = \left(V'_i, U'_i\right)_{i \ge 1}$ be two admissible gadgets for $G$.
Then for any  $X \in \Sm^G_k$ and $p \in \Z$, there is a canonical isomorphism 
\[
``{{\underset{i}\varinjlim}}" \ \Sigma^{\infty}_T X^i(\rho)[p]
\xrightarrow{\simeq}
``{{\underset{i}\varinjlim}}" \ \Sigma^{\infty}_T X^i(\rho')[p]
\]
of ind-objects in ${\sS \sH}(k)$.
\end{prop}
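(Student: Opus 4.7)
I would mirror the proof of Proposition~\ref{prop:Rep-ind}, upgrading the common-refinement construction to the stable motivic category. Retaining the notation $\sV_{i,j} = (X \times U_i \times V'_j)/G$ and $\sU_{i,j} = (X \times U_i \times U'_j)/G$ with natural projections $\pi_{i,j}: \sU_{i,j} \to X^i_G(\rho)$ and $\pi'_{i,j}: \sU_{i,j} \to X^j_G(\rho')$, each realizing its source as an open subscheme of a vector bundle over its target, restriction to the diagonal and application of $\Sigma^\infty_T$ produces a zigzag of morphisms of ind-objects in $\sS\sH(k)$:
\[
\bigl\{\Sigma^\infty_T X^i_G(\rho)\bigr\}_i \xleftarrow{\phi} \bigl\{\Sigma^\infty_T \sU_{i,i}\bigr\}_i \xrightarrow{\phi'} \bigl\{\Sigma^\infty_T X^i_G(\rho')\bigr\}_i.
\]
By symmetry it suffices to prove $\phi$ is an isomorphism in $\text{Ind-}\sS\sH(k)$; the desired canonical isomorphism is then $\phi' \circ \phi^{-1}$.

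For fixed $i$, by the argument of Proposition~\ref{prop:Rep-ind} the sequence $(\sV_{i,j}, \sU_{i,j})_j$ is an admissible gadget over $X^i_G(\rho)$ in the sense of \cite[Definition~4.2.1]{MV}, and \cite[Proposition~4.2.3]{MV} yields an $\A^1$-weak equivalence $\colim_j \sU_{i,j} \xrightarrow{\sim} X^i_G(\rho)$. The task is to promote this colimit equivalence to the ind-level. Factoring $\pi_{i,j}$ as $\sU_{i,j} \hookrightarrow \sV_{i,j} \to X^i_G(\rho)$ with the second map an $\A^1$-equivalence (vector bundle projection), the cofiber of $\Sigma^\infty_T \pi_{i,j}$ agrees with the cofiber of the open inclusion $\sU_{i,j} \hookrightarrow \sV_{i,j}$. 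Stratifying the complement $\sV_{i,j} \setminus \sU_{i,j}$ by smooth locally closed strata and iterating Morel--Voevodsky homotopy purity, this cofiber is assembled as an iterated cone of Thom spaces of normal bundles of rank at least $\codim_{\sV_{i,j}}(\sV_{i,j} \setminus \sU_{i,j})$, a codimension which by admissible gadget condition~(3) for $\rho'$ tends to infinity with $j$. Such Thom spaces lie in arbitrarily deep stages of Voevodsky's slice filtration on $\sS\sH(k)$, so for any fixed compact object $C = \Sigma^\infty_T Y_+$ with $Y$ smooth of bounded dimension, maps from $C$ into them vanish once the slice depth exceeds $\dim(Y)$. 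A sufficiently rapid diagonal reindexing $\lambda(i) \gg i$ then makes $\phi$ (and symmetrically $\phi'$) induce isomorphisms on $\colim_i \Hom_{\sS\sH(k)}(C, -)$ for every such compact $C$, equivalently an isomorphism of ind-objects.

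\textbf{Main obstacle.} The central difficulty is the slice-connectivity argument: the complement $\sV_{i,j} \setminus \sU_{i,j}$ is typically singular, forcing a careful stratification of the complement and bookkeeping of the iterated cofiber assembled from the Thom spaces of the normal bundles of the smooth strata. One must formulate a sufficiently precise vanishing statement in $\sS\sH(k)$ -- roughly ``maps from effective compact objects into deep-slice objects vanish" -- so that the diagonal reindexing produces an honest isomorphism in $\text{Ind-}\sS\sH(k)$ rather than merely a colimit-level equivalence; this is the real content beyond the unstable statement of Proposition~\ref{prop:Rep-ind}.
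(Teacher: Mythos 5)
Your reduction is the same as the paper's: form the common refinement $X^i_{i'}=(X\times U_i\times U'_{i'})/G$, pass to the diagonal system $\{\Sigma^{\infty}_T\sU_{i,i}\}$, and reduce by symmetry to showing that one leg of the zigzag is an isomorphism of ind-objects. The divergence is at the step you yourself flag as the main obstacle, and there the proposal has a genuine gap. The vanishing you invoke is not correct as formulated: slice depth alone cannot force $[\Sigma^{\infty}_T Y_+,E]=0$, because arbitrary simplicial desuspensions of a $c$-effective object are still $c$-effective, so effectivity does not bound the groups of maps out of a fixed compact object. What the Thom spaces of the normal bundles actually give is connectivity in Morel's homotopy $t$-structure, and converting that into the needed vanishing requires Morel's stable connectivity theorem together with a Nisnevich descent/cohomological-dimension argument and bookkeeping of the simplicial shifts of the test objects. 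Moreover, stratifying the (typically singular) complement $\sV_{i,j}\setminus\sU_{i,j}$ by smooth locally closed strata, so that homotopy purity applies, needs a perfect ground field, an assumption the proposition does not make. So the crux of your argument is both unproved and, as stated, false, and it would import hypotheses absent from the statement.

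The paper's own proof shows that no such estimate is needed; the promotion from the colimit level to the ind-level is purely formal. Since $\Sigma^{\infty}_T$ preserves colimits, Proposition~\ref{prop:Rep-ind} already gives that $\psi:\colim_i\,\Sigma^{\infty}_T\sU_{i,i}\to\colim_i\,\Sigma^{\infty}_T X^i_G(\rho)$ is an isomorphism in ${\sS\sH}(k)$. Each term of both systems is the suspension spectrum of a smooth quasi-projective scheme, hence a compact object of ${\sS\sH}(k)$ ({\sl cf.} \cite[Proposition~5.5]{Voev1}); therefore the composite of $\Sigma^{\infty}_T X^i_G(\rho)\to\colim_i\,\Sigma^{\infty}_T X^i_G(\rho)$ with $\psi^{-1}$ factors through a finite stage $\Sigma^{\infty}_T\sU_{j,j}$, and the two triangle identities needed to make these maps an inverse morphism of ind-objects (after reindexing) hold for $j\gg i$ because two maps from a compact object into a filtered colimit that agree in the colimit already agree at some finite stage. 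No purity, stratification, or slice/connectivity input enters. If you do want to pursue your quantitative route, the statement to prove is a connectivity vanishing (for $Y$ smooth of dimension $d$ and $E$ $c$-connective in the homotopy $t$-structure with $c>d+n$, one has $[\Sigma^{\infty}_T(S^n_s\wedge Y_+),E]=0$), not a slice-filtration one, and you would still need the compactness mechanism at the end to pass from isomorphisms on $\colim_i\Hom(C,-)$ to an isomorphism of ind-objects; given that, the formal argument above supersedes the estimate entirely.
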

\begin{proof}
Since the shift functor $(-)[p]$ on ${\sS \sH}(k)$ commutes with 
$``{{\underset{i}\varinjlim}}"(-)$, it is enough to prove the
proposition without using the shift.
For $i, i' \ge 1$, we set 
$X^i_{i'} = {\left(X \times U_i \times U'_{i'}\right)}/G$.
For any $j \ge i, j' \ge i'$, there is a locally closed immersion
$\theta^{i,j}_{i',j'}: X^i_{i'} \to X^{j}_{j'}$ and this induces a natural
map $\sigma^{i,j}_{i',j'} = \Sigma^{\infty}_T\theta^{i,j}_{i',j'}: 
\Sigma^{\infty}_T X^i_{i'} \to \Sigma^{\infty}_T X^{j}_{j'}$ in ${\sS \sH}(k)$.
Set $\sU = \colim_i \ \colim_{i'} \ X^i_{i'}$ as a motivic space.
Since for any $i, i' \ge 1$, the inclusion $X^i_{i'} \inj \sU$ factors
through the inclusions $X^i_{i'} \inj X^l_{l} \inj \sU$ where
$l = {\rm max}(i, i')$, we see that $\sU$ is also the colimit
of the direct system of smooth schemes $\left\{\sU_{i}, \lambda_{i,j}\right\}$
if we let $\sU_i = X^i_i$ and $\lambda_{i,j} = \theta^{i,j}_{i,j}$.

There are natural projections $p_i: \sU_i \to X^i(\rho)$ and
$q_{i}: \sU_i \to X^i(\rho')$. These maps combine together to give us the
following morphisms of ind-objects in ${\sS \sH}(k)$:

\begin{equation}\label{eqn:Stable-Ind0}
``{{\underset{i}\varinjlim}}" \ \Sigma^{\infty}_T X^i(\rho) 
\xleftarrow{\wh{p}} 
``{{\underset{i}\varinjlim}}" \ \Sigma^{\infty}_T \sU_i \xrightarrow{\wh{q}}
``{{\underset{i}\varinjlim}}" \ \Sigma^{\infty}_T X^i(\rho').
\end{equation}
  
It suffices to show that these two morphisms of ind-objects are 
isomorphisms. We shall show that $\wh{p}$ is an isomorphism and exactly the
same proof works for $\wh{q}$.
Let $\gamma_{i,j}: X^i(\rho) \to X^j(\rho)$ denote the structure maps
of $``{{\underset{i}\varinjlim}}" \ X^i(\rho)$.

To show that $\wh{p}$ is an isomorphism in ${\rm ind}{\sS \sH}(k)$,
what we need to show is that for every $i \ge 1$, there exists $j =
\beta(i) \gg i$ and a morphism $\beta_i: \Sigma^{\infty}_T X^i(\rho) \to 
\Sigma^{\infty}_T \sU_j$
in ${\sS \sH}(k)$ such that the diagram

\begin{equation}\label{eqn:Stable-Ind1}
\xymatrix@C2.5pc{
\Sigma^{\infty}_T \sU_i \ar[r]^<<<<<{\Sigma^{\infty}_Tp_i} \ar[d] &  
\Sigma^{\infty}_T X^i(\rho) \ar[d] \ar[dl]_{\beta_i} \\
\Sigma^{\infty}_T \sU_j \ar[r]_<<<<<{\Sigma^{\infty}_Tp_j} & 
\Sigma^{\infty}_T X^j(\rho)}
\end{equation} 
commutes. 
In this case, $\beta: \N^{+} \to \N^{+}$ yields the inverse of
$\wh{p}$ in ${\rm ind}{\sS \sH}(k)$.

We have shown in Proposition~\ref{prop:Rep-ind} that the map
$p = \colim_i \ p_i: \sU \to X(\rho)$ of colimits is an $\A^1$-weak
equivalence. Since $\Sigma^{\infty}_T$ preserves colimits (the colimit of
$T$-spectra is taken level-wise), this implies in particular that the map
$\psi = \Sigma^{\infty}_T p : \colim_i \ \Sigma^{\infty}_T \sU_i \to 
\colim_i \ \Sigma^{\infty}_T X^i(\rho)$ is an isomorphism in ${\sS \sH}(k)$.
Let $\phi: \colim_i \ \Sigma^{\infty}_T X^i(\rho) \to 
\colim_i \ \Sigma^{\infty}_T \sU_i$ be the inverse of $\psi$.

Since each $\Sigma^{\infty}_T X^i(\rho)$ is a compact object of 
${\sS \sH}(k)$ ({\sl cf.} \cite[Proposition~5.5]{Voev1}), the composite map
$\Sigma^{\infty}_T X^i(\rho) \to \colim_i \ \Sigma^{\infty}_T X^i(\rho) 
\xrightarrow{\phi} \colim_i \ \Sigma^{\infty}_T \sU_i$ factors through
a map $\Sigma^{\infty}_T X^i(\rho) \xrightarrow{\beta_i} 
\Sigma^{\infty}_T \sU_{i'} \to \colim_i \ \Sigma^{\infty}_T \sU_i$ for 
some $i' \gg i$.
For every $j \ge i'$, let $\beta_{i,j}$ denote the composite map
\[
\beta_{i,j}: \Sigma^{\infty}_T X^i(\rho) \xrightarrow{\beta_i} 
\Sigma^{\infty}_T \sU_{i'} \xrightarrow{\Sigma^{\infty}_T \lambda_{i',j}} 
\Sigma^{\infty}_T \sU_{j}.
\]

We claim that the diagram
\begin{equation}\label{eqn:Stable-Ind2}
\xymatrix@C2.9pc{
& \Sigma^{\infty}_T X^i(\rho) \ar[dl]_{\beta_{i,j}} 
\ar[d]^{\Sigma^{\infty}_T \gamma_{i,j}} 
\\
\Sigma^{\infty}_T \sU_{j} \ar[r]_<<<<<<<{\Sigma^{\infty}_T p_j} & 
\Sigma^{\infty}_T X^j(\rho)}
\end{equation}
commutes for all $j \gg i$.
To prove this claim, we consider the bigger diagram
\begin{equation}\label{eqn:Stable-Ind3}
\xymatrix@C2.9pc{
\Sigma^{\infty}_T X^i(\rho) \ar[r]^{\beta_{i,j}} 
\ar[dr]_{\Sigma^{\infty}_T \gamma_{i,j}} & \Sigma^{\infty}_T \sU_{j} 
\ar[d]^{\Sigma^{\infty}_T p_j} \ar[r]^{\Sigma^{\infty}_T \lambda_j} & 
\Sigma^{\infty}_T \sU \ar[d]_{\simeq}^{\psi} \\
& \Sigma^{\infty}_T X^j(\rho) \ar[r]_{\Sigma^{\infty}_T \gamma_j} 
& \Sigma^{\infty}_T X(\rho).}
\end{equation}

For every $j \ge i'$, the square on the right clearly commutes and the 
outer trapezium commutes by the construction of $\beta_{i,j}$ and the
fact that ${\phi}^{-1} = \psi$.
In particular, the maps $\Sigma^{\infty}_T p_j \circ \beta_{i,j}$ and
$\Sigma^{\infty}_T \gamma_{i,j}$ become same when we go all the way to
the colimit $\Sigma^{\infty}_T X(\rho)$. 
Since $\Sigma^{\infty}_T X^i(\rho)$ is compact, the map
\[
\colim_{j \ge i} \ \Hom_{{\sS \sH}(k)}\left(\Sigma^{\infty}_T X^i(\rho),
\Sigma^{\infty}_T X^j(\rho)\right) \to
\Hom_{{\sS \sH}(k)}\left(\Sigma^{\infty}_T X^i(\rho), 
\Sigma^{\infty}_T X(\rho)\right)
\]
is an isomorphism of sets.
We conclude that the maps $\Sigma^{\infty}_T p_j \circ \beta_{i,j}$ and
$\Sigma^{\infty}_T \gamma_{i,j}$ must become same when $j \gg i'$.
This proves the claim.

Our next claim is that the diagram
\begin{equation}\label{eqn:Stable-Ind4}
\xymatrix@C2.9pc{
\Sigma^{\infty}_T \sU_{i} \ar[r]^<<<<<<<{\Sigma^{\infty}_T p_i} 
\ar[d]_{\Sigma^{\infty}_T \lambda_{i,j}} & 
\Sigma^{\infty}_T X^i(\rho) \ar[dl]^{\beta_{i,j}} \\
\Sigma^{\infty}_T \sU_{j}  & }
\end{equation}
commutes for all $j \gg i$.
To do this, we consider for every $j \ge i'$, the diagram
\begin{equation}\label{eqn:Stable-Ind5}
\xymatrix@C2.9pc{
\Sigma^{\infty}_T \sU_{i} \ar[d]_{\Sigma^{\infty}_T p_i} 
\ar[r]^{\Sigma^{\infty}_T \lambda_{i,j}} & \Sigma^{\infty}_T \sU_{j}  
\ar[r]^{\Sigma^{\infty}_T \lambda_j} \ar[d]^{\Sigma^{\infty}_T p_j} & 
\Sigma^{\infty}_T \sU \\
\Sigma^{\infty}_T X^i(\rho) \ar[ur]^{\beta_{i,j}} 
\ar[r]_{\Sigma^{\infty}_T \gamma_{i,j}} & 
\Sigma^{\infty}_T X^j(\rho) \ar[r]_{\Sigma^{\infty}_T \gamma_j} &
\Sigma^{\infty}_T X(\rho) \ar[u]_{\phi}^{\simeq}.} 
\end{equation}

By the construction of $\beta_{i,j}$, we know that
$\Sigma^{\infty}_T \lambda_j \circ \beta_{i,j} = 
\phi \circ \Sigma^{\infty}_T \gamma_i$. On the other hand, we also know that
\begin{equation}\label{eqn:Stable-Ind6}
\begin{array}{lll}
\psi \circ \Sigma^{\infty}_T \lambda_j \circ \Sigma^{\infty}_T \lambda_{i,j}
& = & \psi \circ \Sigma^{\infty}_T \lambda_i \\ 
& = & \Sigma^{\infty}_T \gamma_i \circ \Sigma^{\infty}_T p_i \\
& = & \Sigma^{\infty}_T \gamma_j \circ \Sigma^{\infty}_T \gamma_{i,j} \circ
\Sigma^{\infty}_T p_i.
\end{array}
\end{equation}

Equivalently, we get 
\[
\begin{array}{lll}
\Sigma^{\infty}_T \lambda_j \circ \beta_{i,j} \circ \Sigma^{\infty}_T p_i 
& = & \phi \circ \Sigma^{\infty}_T \gamma_i \circ \Sigma^{\infty}_T p_i \\
& = & \phi \circ \Sigma^{\infty}_T \gamma_j 
\circ \Sigma^{\infty}_T \gamma_{i,j} \circ \Sigma^{\infty}_T p_i \\
& = & \Sigma^{\infty}_T \lambda_j \circ \Sigma^{\infty}_T \lambda_{i,j},
\end{array}
\]
where the last equality follows from ~\eqref{eqn:Stable-Ind6}.
Since $\Sigma^{\infty}_T \sU_i$ is a compact object of ${\sS \sH}(k)$, 
the same
argument as above shows that we must have
$\beta_{i,j} \circ \Sigma^{\infty}_T p_i = \Sigma^{\infty}_T \lambda_{i,j}$
for all $j \gg i$. The two claims together prove ~\eqref{eqn:Stable-Ind1}
and hence the proposition.
\end{proof}

\subsection{Algebraic $K$-theory of schemes and spaces}
\label{subsection:KTSS}
For any $X \in \Sch_k$, let $G_*(X)$ (resp. $K_*(X)$) denote the Quillen 
$K$-theory of coherent sheaves (vector bundles) on $X$. One knows that
the functor $X \mapsto G_*(X)$ is covariant for proper maps and contravariant
for maps of finite Tor-dimension ({\sl cf.} \cite[\S~5.10]{Srinivas}). 
In particular, for finite Tor-dimension morphisms $X \xrightarrow{f} Y
\xrightarrow{g} Z$, one has

\begin{equation}\label{eqn:FTD}
(g \circ f)^* = f^* \circ g^* : G_*(Z) \to G_*(X).
\end{equation}

The functor $X \mapsto K_*(X)$ is contravariant for all
maps and covariant for proper maps of finite Tor-dimension.
Moreover, $K$-theory and $G$-theory satisfy the projection 
formulas whenever the pull-back and the push-forward maps are defined.

\subsubsection{$K$-theory of motivic spaces}
\label{subsubsection:KTMS}
Recall from \cite[\S 6.2]{Voev1} that there is a fibrant 
$T$-spectrum ${\bf{BGL}}$
in ${\sS \sH}(k)$ which represents the algebraic $K$-theory.
For any $A \in {\sS \sH}(k)$, one defines the {\sl algebraic $K$-theory}
of $A$ by 
\begin{equation}\label{eqn:KMS0}
K^{a,b}(A) = 
\Hom_{{\sS \sH}(k)}\left(A, \Sigma^{a,b}{\bf{BGL}}\right).
\end{equation}

For $X \in {\bf {Spc}}$, the algebraic $K$-theory 
$K^{a,b}(\Sigma^{\infty}_T X_{+})$
is denoted by $K^{a,b}(X)$.
Using the Bott-periodicity isomorphism ${\bf{BGL}} \simeq T \wedge {\bf{BGL}}$
({\sl cf.} \cite[Theorem~6.8]{Voev1}),
one finds that  
\begin{equation}\label{eqn:KMS}
\begin{array}{lll}
K^{a,b}(X) & = & 
\Hom_{{\sS \sH}(k)}\left(\Sigma^{\infty}_T X [2b-a], 
{\bf{BGL}}\right) \\
& = & 
\Hom_{{\sS \sH}(k)}\left(\Sigma^{\infty}_T (S^{2b-a}_s \wedge X_{+}), 
{\bf{BGL}}\right)
\end{array}
\end{equation}
for $X \in {\bf {Spc}}$. It is shown in \cite[Theorem~6.9]{Voev1} that the 
last term is same as the 
Quillen-Thomason $K$-theory $ K_{2b-a}(X)$, if $X$ is a smooth scheme of
finite type over $k$.  
This allows us to define the algebraic $K$-theory of a motivic space $X$
by 
\begin{equation}\label{eqn:KMS*}
K_p(X) := \Hom_{{\sS \sH}(k)}\left(\Sigma^{\infty}_TX[p],
{\bf{BGL}}\right).
\end{equation}
\enlargethispage{50 pt}
The result below lists some basic properties of the $K$-theory
of motivic spaces.

\begin{prop}\label{prop:Basic-K}
The assignment $X \mapsto K_*(X)$ is a contravariant functor on ${\sH}(k)$.
This coincides with the Quillen $K$-theory of algebraic vector bundles if
$X \in \Sm_k$. It has the following other properties.
\begin{enumerate}
\item
If $H \subseteq G$ is a closed subgroup of a linear algebraic group $G$ over $k$
and if $X \in \Sm^H_k$, then the map $K_*(Y_G) \to K_*(X_H)$ is an isomorphism, 
where $Y = X \stackrel{H}{\times} G$. 
\item 
For $X \in \Sm^G_{{free}/k}$, the map $K_*(X/G) \to K_*(X_G)$ is an isomorphism.
\item
If $G$ has a Levi decomposition $G = L \ltimes G^u$ such
that $G^u$ is split over $k$, then $K_*(X_L) \simeq K_*(X_G)$ for any
$X \in \Sm^G_k$.
\end{enumerate}
\end{prop}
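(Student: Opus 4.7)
The plan is to derive every assertion from a corresponding motivic-space-level isomorphism already established in Section~\ref{section:MBS}, combined with the contravariant functoriality of $K_*$. That functoriality on $\sH(k)$ is built into the definition~\eqref{eqn:KMS*}: the unpointing functor $(-)_+\colon \sH(k) \to \sH_\bullet(k)$, the smash with $S^i_s$, and $\Hom_{\sH_\bullet(k)}(-, BGL \times \Z)$ compose to a contravariant functor on $\sH(k)$. Agreement with Quillen--Thomason $K$-theory on $\Sm_k$ is the Morel--Voevodsky representability theorem \cite[Theorem~4.3.13]{MV} already invoked in~\eqref{eqn:KMS*}.

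For~(1), \corref{cor:Morita1} gives a canonical isomorphism $X_H \cong Y_G$ in $\sH(k)$, and functoriality yields the desired isomorphism on $K_*$. For~(3), \propref{prop:Unip} provides $X_L \cong X_G$ in $\sH(k)$, and again functoriality finishes the argument.

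The only step requiring a small manipulation is~(2). I would apply \propref{prop:Morita0} in the extreme case $H = G$, $F = G/G = \{e\}$: since $X \in \Sm^G_{{free}/k}$, the quotient map $X \to X/G$ is a $G$-torsor in $\Sm^G_k$ once $X/G$ is endowed with the trivial $G$-action, so the proposition supplies $X_G \cong (X/G)_{\{e\}}$ in $\sH(k)$. Choosing any admissible gadget for the trivial group (for instance $V_i = \A^{2i}$, $U_i = V_i \setminus \{0\}$, which satisfies the codimension conditions of \defref{defn:Add-Gad}), the colimit $E_{\{e\}}$ is $\A^1$-contractible by \cite[Proposition~4.2.3]{MV}, whence $(X/G)_{\{e\}} = (X/G) \times E_{\{e\}} \simeq X/G$ in $\sH(k)$; independence of the gadget is \propref{prop:Rep-ind}. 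I do not expect a real obstacle here: the proposition is essentially a translation of the Borel-space results of Section~\ref{section:MBS} into $K$-theoretic language, and the hard inputs --- the Morita-type isomorphisms and the motivic Borel construction --- have already been carried out.
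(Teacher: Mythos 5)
Your argument is correct, and for the contravariance, the agreement with Quillen $K$-theory, and parts (1) and (3) it is exactly the paper's proof (functoriality of $K_*$ on $\sH(k)$ applied to \corref{cor:Morita1} and \propref{prop:Unip}). The only divergence is in part (2): the paper deduces it from \lemref{lem:elem} together with \cite[Lemma~4.2.9]{MV}, which says precisely that for a free action the projection $X \stackrel{G}{\times} E_G \to X/G$ is an $\A^1$-weak equivalence, whereas you rederive this inside the paper's framework by applying \propref{prop:Morita0} in the degenerate case $H=G$, $F=\{e\}$, and then contracting $E_{\{e\}}$. That route is legitimate: the proof of \propref{prop:Morita0} only uses \propref{prop:Rep-ind-V} and \cite[Proposition~4.2.3]{MV}, so there is no circularity, and your trivial-group gadget does satisfy \defref{defn:Add-Gad}. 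What it buys is independence from the external reference \cite[Lemma~4.2.9]{MV}; what it costs is two small verifications you should make explicit: first, the step $(X/G)\times E_{\{e\}} \simeq X/G$ needs either the fact that $-\times Y$ preserves $\A^1$-weak equivalences or, more directly, an application of \cite[Proposition~4.2.3]{MV} to the admissible gadget $\left((X/G)\times V_i, (X/G)\times U_i\right)$ over the base $X/G$; second, since the statement concerns the specific map $K_*(X/G)\to K_*(X_G)$, one should note that all the comparison maps occurring in \propref{prop:Morita0} and \propref{prop:Rep-ind-V} are maps over $X/G$, so your composite isomorphism $X_G \cong X/G$ in $\sH(k)$ is indeed induced by the natural projection, and hence yields that natural map on $K$-theory. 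Both points are routine, and both proofs share the same implicit hypothesis that $X\to X/G$ is a $G$-torsor for $X\in\Sm^G_{{free}/k}$, so there is no gap peculiar to your version.
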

\begin{proof}
The contravariance follows from the fact that $A \mapsto K_*(A)$ is
a generalized cohomology theory on ${\sS \sH}(k)$. The isomorphism
with Quillen $K$-theory for smooth schemes is proven in 
\cite[Theorem~6.9]{Voev1}. The property (1) follows from 
Corollary~\ref{cor:Morita1}. 
The property (3) follows from Proposition~\ref{prop:Unip}.

To prove (2), let $(V_i, U_i)_{i \ge 1}$ be an admissible gadget for $G$.
This gives a compatible sequence of maps $X \stackrel{G}{\times} U_i \inj
X \stackrel{G}{\times} V_i \to X/G$, where the first map is an open immersion
and the second map is a vector bundle projection. In particular, this map is
an $\A^1$-weak equivalence ({\sl cf.} \cite[Example~3.2.2]{MV}).
We conclude that the map $\colim_{i} (X \stackrel{G}{\times} V_i) \to X/G$
is an $\A^1$-weak equivalence.

On the other hand, it follows from the definition of an admissible gadget
in Definition~\ref{defn:Add-Gad} that map 
$\colim_{i} (X \stackrel{G}{\times} U_i) \to 
\colim_{i} (X \stackrel{G}{\times} V_i)$ is an isomorphism of motivic spaces.
It follows that the map $X_G \simeq \colim_{i} (X \stackrel{G}{\times} U_i)
\to X/G$ is an $\A^1$-weak equivalence and this proves (2).
\end{proof}

\begin{thm}\label{thm:Stable-Ind**}
Let $\rho$ and $\rho'$ be two admissible gadgets for $G$. Then for any  
$X \in \Sm^G_k$ and $p \ge 0$, there is a canonical isomorphism 
\begin{equation}\label{eqn:Stable-Ind**0}
``{{\underset{i}\varprojlim}}" \ K_p\left(X^i(\rho)\right) 
\xrightarrow{\simeq}
``{{\underset{i}\varprojlim}}" \ K_p\left(X^i(\rho')\right)
\end{equation}
of pro-abelian groups. In particular, 
\begin{enumerate}
\item
${\underset{i}{\varprojlim}^{m}} \ K_p\left(X^i(\rho)\right) \
\simeq \ {\underset{i}{\varprojlim}^{m}} \ K_p\left(X^i(\rho')\right)$
for $m \ge 0$.
\item
$``{{\underset{i}\varprojlim}}" \ K_p\left(X^i(\rho)\right)$ satisfies the 
Mittag-Leffler condition if and only if so does
$``{{\underset{i}\varprojlim}}" \ K_p\left(X^i(\rho')\right)$.
\end{enumerate}
\end{thm}
\begin{proof}
Since a contravariant functor on ${\sS \sH}(k)$ takes an isomorphism
of ind-objects to an isomorphism of pro-objects, the isomorphism
~\eqref{eqn:Stable-Ind**0} 
follows directly from Propositions~\ref{prop:Stable-Ind} and
~\ref{prop:Basic-K}. 
The second assertion follows from the 
isomorphism ~\eqref{eqn:Stable-Ind**0} and
the elementary fact that the derived limits of two isomorphic pro-abelian
groups are isomorphic ({\sl cf.} \cite[Corollary~7.3.7]{Pros}). 
Also, a pro-abelian group $``{{\underset{i}\varprojlim}}" \ A_i$ satisfies the
Mittag-Leffler condition if and only if it is isomorphic to a pro-abelian
group $``{{\underset{i}\varprojlim}}" \ B_i$ such that the map $B_j \to B_i$ 
is surjective for all $j \ge i$. 
\end{proof}

\begin{remk}\label{remk:Stable-Sing*}
The use of $\A^1$-homotopy theory techniques prevents us from extending 
Theorem~\ref{thm:Stable-Ind**} to singular schemes with $G$-action.
However, we shall use a different and more geometric argument in 
\S~\ref{section:NON-P} to prove this for the $G_0$-functor.
\end{remk}

\subsubsection{$\sK$-theory of Borel spaces}\label{subsubsection:Kth-Borel}
Atiyah \cite{Atiyah} (see also \cite[\S 4.6]{AH}) introduced the $\sK$-theory 
for infinite $CW$-complexes to study the connection between the 
representation ring and the cohomology of the classifying spaces of 
compact Lie groups.
This is defined in terms of the projective limit of the usual topological
$K$-theory of the various skeleta of the given $CW$-complex
and it plays a very important role in understanding the topological
$K$-theory of such complexes. The above Theorem allows us to define an 
algebraic analogue of this theory for motivic spaces $X_G$. 

\begin{defn}\label{defn:Limit-K}
Let $G$ be a linear algebraic group over $k$ and let $X \in \Sm^G_k$. We define
\[
\sK_p(X_G) \ : = \ {\underset{i}\varprojlim} \ 
K_p(X^i(\rho))
\]
where $\rho = \left(V_i, U_i\right)$ is any admissible gadget for $G$.
\end{defn}

It follows from Theorem~\ref{thm:Stable-Ind**} that $\sK_p(X_G)$ is
well-defined for every $p \ge 0$.
Since each $K_p(X^i(\rho))$ is an $R(G)$-module via the isomorphism 
$K_p(X^i(\rho)) \xrightarrow{\simeq} K^G_p(X \times U_i)$ and since
the maps $K^G_p(X \times U_{i+1}) \to K^G_p(X \times U_i)$ are
$R(G)$-linear (see \S~\ref{subsubsection:Eq-KT}), we see that 
$\sK_p(X_G)$ is the limit of an inverse system of $R(G)$-linear
maps. In particular, it is an $R(G)$-module.

Since the structure maps of the pro-$R(G)$-module 
$``{{\underset{i}\varprojlim}}" \ K_p\left(X^i(\rho)\right)$ are defined in 
terms of the pull-back maps, one checks easily from the various properties
of the ordinary algebraic $K$-theory that the functor 
$X \mapsto \sK_*(X_G)$ on $\Sm^G_k$ satisfies all the properties of an
oriented cohomology theory except possibly the localization sequence.
However, it is true that this theory satisfies the localization sequence
as well. A proof of this will appear elsewhere.

For any $X \in \Sm^G_k$, the inclusions $X^i_G(\rho) \inj X_G(\rho)$
induce a natural map
\begin{equation}\label{eqn:Limit-K0}
\tau^G_X: K_*(X_G) \to \sK_*(X_G)
\end{equation}
which is surjective. This can be seen from ~\eqref{eqn:KMS*} and the
resulting Milnor exact sequence ({\sl cf.} \cite[Proposition~7.3.2]{Hovey}) 
\begin{equation}\label{eqn:MES}
0 \to {\underset{i}{\varprojlim}^1} \
K_{p+1}\left(X^i(\rho)\right) \to K_p\left(X(\rho)\right) \to
{\underset{i}\varprojlim} \ K_{p}\left(X^i(\rho)\right) \to 0.
\end{equation}

\section{Equivariant $K$-theory of filtrable schemes}
\label{section:EKT}
In this section, we prove some decomposition theorems for the 
equivariant and ordinary $K$-theory of a certain class of
schemes which are called filtrable. These decomposition theorems
will be used to prove our main results when the underlying group
is a torus. We begin with a review of equivariant $K$-theory, which is
the third main object in our study of the completion problem.

\subsection{Equivariant $K$-theory}\label{subsubsection:Eq-KT}
We recall the equivariant $K$-theory of group scheme actions from
\cite{Thomason2}, \cite{Thomason3} and \cite{Thomason1}.
Let $G$ be a linear algebraic group over $k$. For $X \in \Sch^G_k$,
$G^G(X)$ (resp. $K^G(X)$) is the $K$-theory spectrum of the $G$-equivariant  
coherent sheaves (resp. vector bundles) on $X$. 
For $p \ge 0$, $G^G_p(X)$ and $K^G_p(X)$ denote the stable homotopy groups
of the corresponding spectra.
For $X \in \Sm^G_k$, the map $K^G(X) \to G^G(X)$
is a weak equivalence of spectra. For $X \in \Sch^G_{{free}/k}$, the natural 
map $G(X/G) \to G^G(X)$ is a
weak equivalence ({\sl cf.} \cite[\S 3.2]{EG1}).

$K^G(X)$ is a ring spectrum with respect to the tensor product of 
equivariant vector bundles. $G^G(X)$ is a module spectrum over 
$K^G(X)$ with respect to the tensor product of equivariant vector bundles and
coherent sheaves.
This module structure induces a functorial map
$K^G_p(X) \otimes_{\Z} G^G_q(X) \to G^G_{p+q}(X)$.
The ring $K^G_0(k)$ is also denoted by $R(G)$ and is called the representation
ring of $G$.
$G^G(X)$ is a contravariant
functor in the group $G$, it is contravariant with respect to equivariant
flat maps, and is covariant with respect to equivariant proper maps.
 
Any map $f : X \to Y$ in $\Sch^G_k$ induces a morphism 
$f^*: K^G(Y) \to K^G(X)$ in the category of ring spectra.
Since $G^G(X)$ is a module spectrum over $K^G(X)$,
we see that the map $f$ makes $G^G(X)$ into a module spectrum over $K^G(Y)$.
In particular, the structure map $X \to \Spec(k)$ makes
$G^G_p(X)$ an $R(G)$-module for all $p \ge 0$ and $K^G_0(X)$ an
$R(G)$-algebra. 
Moreover, $f^*: K^G_p(Y) \to K^G_p(X)$ and $f^*: G^G_p(Y) \to G^G_p(X)$
(if $f$ is flat) are $R(G)$-linear.
 
The projection formula holds for equivariant proper maps. That is,
for an equivariant proper map $f:X \to Y$, $f_* : G^G(X) \to G^G(Y)$ is
a morphism of $K^G(Y)$-module spectra. In particular, the maps
$f_* : G^G_p(X) \to G^G_p(Y)$ is also $R(G)$-linear for all $p \ge 0$.
We refer to \cite[\S~1]{Thomason2} to verify the above properties.

Let $\epsilon: R(G) \to \Z$ denote the augmentation map which takes
any virtual representation to its rank. The kernel $I_G$ of this map
is called the augmentation ideal of $G$.
It is known that $R(G)$ is a commutative noetherian ring if $G$ is split
reductive ({\sl cf.} \cite[Lemmas~3.9, 9.2]{Krishna1}).
The $I_G$-adic completion of the ring $R(G)$ will be denoted by
$\wh{R(G)}$.
The equivariant $K$-theory also satisfies the following Morita equivalence
by a result of Thomason.

\begin{thm}[{\cite[Theorem~3.8]{JK}}, 
{\cite[Theorem~1.10]{Thomason1}}]\label{thm:Morita}
Let $G$ be a linear algebraic group over $k$ and let $H$ be a closed subgroup 
of $G$. For any $X \in \Sch^H_k$, there is an isomorphism 
$G^G_*(X \stackrel{H}{\times} G) \xrightarrow{\simeq} G^H_*(X)$ 
of $R(G)$-modules which is natural in $X$.
If $G$ has a Levi decomposition $G = L \ltimes G^u$ such that
$G^u$ is $k$-split, then $G^G_*(X) \simeq G^L_*(X)$ for any
$X \in \Sch^G_k$. These are ring isomorphisms if $X \in \Sm^H_k$.
\end{thm}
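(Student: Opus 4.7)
\medskip

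\noindent\textbf{Proof plan.} The first assertion is a Morita/induction isomorphism and my plan is to establish it at the level of exact categories of equivariant coherent sheaves and then apply Quillen's machinery. Set $Y = X\stackrel{H}{\times} G$, where $H$ acts on $X\times G$ by $h\cdot(x,g) = (hx, gh^{-1})$ and the residual $G$-action on $Y$ is by left translation on the second factor. The quotient map $q : X\times G \to Y$ is an $H$-torsor, and the section $\sigma : X \to Y$, $x\mapsto [x,1]$, is an $H$-equivariant closed immersion (with $H$ acting on $Y$ through $H\subseteq G$). I would define $\sigma^*:\mathbf{Coh}^G(Y)\to\mathbf{Coh}^H(X)$ by pullback, and construct an inverse as follows: given an $H$-equivariant coherent sheaf $\sF$ on $X$, pull back along the first projection $\pi:X\times G\to X$ to obtain a $(H\times G)$-equivariant sheaf $\pi^*\sF$ (the $G$ acting on the second factor), and then descend along the $H$-torsor $q$ to a $G$-equivariant sheaf on $Y$. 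The standard descent for torsors under a smooth affine group scheme (applied to $H$) yields that these two functors are mutually inverse exact equivalences. Applying the Quillen $K$-functor gives the isomorphism $G^G_*(Y)\xrightarrow{\cong} G^H_*(X)$, and the $R(G)$-module structure together with naturality in $X$ are transparent from the construction.

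\medskip

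For the second assertion, the plan is to iterate the first part along the filtration of $G$ coming from the Levi decomposition. Fix the filtration $\{e\}=U_0\subsetneq U_1\subsetneq\cdots\subsetneq U_n = G^u$ with $U_j/U_{j-1}\cong\G_a$ and set $L_j = L\cdot U_j$, so $L_0 = L$ and $L_n = G$. It suffices to show $G^{L_{j+1}}_*(X)\cong G^{L_j}_*(X)$ for every $j$, and then chain the isomorphisms. For a single step, apply part (1) with $G$ replaced by $L_{j+1}$ and $H$ by $L_j$ to obtain
\[
G^{L_{j+1}}_*\bigl(X\stackrel{L_j}{\times} L_{j+1}\bigr)\ \cong\ G^{L_j}_*(X).
\]
Since $X$ already carries an $L_{j+1}$-action, the map $[x,g]\mapsto (gx, gL_j)$ gives an $L_{j+1}$-equivariant isomorphism $X\stackrel{L_j}{\times} L_{j+1}\cong X\times (L_{j+1}/L_j)$. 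The quotient $L_{j+1}/L_j\cong U_{j+1}/U_j\cong\G_a$, and the projection $X\times (L_{j+1}/L_j)\to X$ is an $L_{j+1}$-equivariant $\G_a$-torsor (indeed, a Zariski-locally trivial $\A^1$-bundle).

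\medskip

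At this point, the key input is equivariant homotopy invariance: for any $L_{j+1}$-equivariant torsor $E\to X$ under a split unipotent group, the pullback $G^{L_{j+1}}_*(X)\to G^{L_{j+1}}_*(E)$ is an isomorphism. Applying this to the $\G_a$-torsor above gives $G^{L_{j+1}}_*(X\times L_{j+1}/L_j)\cong G^{L_{j+1}}_*(X)$, and combining with the Morita isomorphism yields $G^{L_{j+1}}_*(X)\cong G^{L_j}_*(X)$. Iterating over $j$ produces the desired isomorphism $G^G_*(X)\cong G^L_*(X)$.

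\medskip

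\noindent\textbf{Main obstacle.} The descent step in part (1) is genuinely the subtle ingredient: one must know that $H$-equivariant descent along the $H$-torsor $X\times G\to Y$ is effective for coherent sheaves and is compatible with the induced $G$-action on the second factor. Part (2) is comparatively routine once equivariant homotopy invariance for $\G_a$-torsors is available; in characteristic zero one could alternatively use Proposition~\ref{prop:Unip} at the motivic level, but here we need it at the level of equivariant $K$-theory of possibly singular schemes, which requires a genuine input beyond the motivic formalism developed so far.
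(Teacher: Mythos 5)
The paper never proves this statement: it is imported verbatim from Thomason (\cite[Theorem~1.13]{Thomason1}) and used as a black box, for instance in Theorem~\ref{thm:PPEquiv} and in the proof of Theorem~\ref{thm:Alg-AS-All}. So there is no in-paper argument to compare against; what you have written is essentially a reconstruction of Thomason's own proof, and it is sound in outline. For the first assertion, descent along the smooth affine $H$-torsor $X\times G\to X\stackrel{H}{\times}G$ is effective for quasi-coherent sheaves and preserves coherence, and your two functors (restriction along $\sigma$, respectively pullback along the projection followed by descent) are quasi-inverse equivalences between the abelian category of $G$-equivariant coherent sheaves on $X\stackrel{H}{\times}G$ and that of $H$-equivariant coherent sheaves on $X$; equivalently one passes through $(H\times G)$-equivariant sheaves on $X\times G$ using that the projection to $X$ is a trivial $G$-torsor. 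Exactness is automatic for an equivalence of abelian categories, so Quillen's $K$-functor gives the isomorphism, and $R(G)$-linearity (with $R(G)$ acting on $G^H_*(X)$ through $R(G)\to R(H)$) and naturality in $X$ are visible from the construction. Note that this argument works for arbitrary, possibly singular, $X\in\Sch^H_k$, which is exactly why the statement cannot be deduced from the motivic Proposition~\ref{prop:Morita0}.

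For the second assertion two details deserve care, though neither is fatal. First, for $L_j=L\cdot U_j$ to be a closed subgroup you need $L$ to normalize $U_j$, so the filtration of $G^u$ must be chosen $G$-stable (e.g.\ refined from the descending central series); the paper makes the same tacit choice in Proposition~\ref{prop:Unip}. Second, the successive quotients are vector groups $\G_a^{n_j}$ rather than $\G_a$, and $L_{j+1}$ acts on $L_{j+1}/L_j\cong U_{j+1}/U_j$ by an affine, not linear, action; the correct input is Thomason's equivariant homotopy invariance for torsors under equivariant vector bundles (\cite{Thomason3}), applied to $X\times\left(L_{j+1}/L_j\right)\to X$ viewed as a torsor under the equivariant vector bundle $X\times\left(U_{j+1}/U_j\right)$ on which $L_{j+1}$ acts linearly through conjugation. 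With these adjustments your induction over the filtration gives $G^G_*(X)\cong G^L_*(X)$ exactly along Thomason's lines.
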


\subsection{Filtrable schemes}\label{subsection:Filt}
Let $G$ be a linear algebraic group over $k$ and let $X \in \Sch^G_k$. 
Following \cite[\S~3]{Brion2}, we shall say that $X$ is 
$G$-{\sl filtrable} 
if the fixed point subscheme $X^G$ is smooth and projective, 
there is an ordering $X^G = \stackrel{n}{\underset{m=0}{\coprod}}
Z_m$ of the connected components of the fixed point subscheme, a 
filtration of $X$ by $G$-invariant closed subschemes
\begin{equation}\label{eqn:filtration-BB}
{\emptyset} = X_{-1} \subsetneq X_0 \subsetneq \cdots \subsetneq X_n = X
\end{equation}
with $Z_m \subseteq  W_m := (X_m \setminus X_{m-1})$ and maps 
${\phi}_m : W_m \to Z_m$ for $0 \le m \le n$ 
which are all $G$-equivariant vector bundles such that the inclusions
$Z_m \inj W_m$ are the 0-section embeddings.
It is important to note that
the closed subschemes $X_m$'s may not be smooth even if $X$ is so.
Observe also that if $X$ is $G$-filtrable, then each closed subscheme 
$X_m$ is also $G$-filtrable. If $X$ is a smooth $G$-filtrable scheme, the 
associated motivic Borel space $X_G$ will be called filtrable.

We shall say that a $k$-scheme $X$ is filtrable if there are closed subschemes 
$\{Z_0, \cdots , Z_m\}$ of $X$ which are connected, smooth and projective, a 
filtration of $X$ by closed subschemes
\begin{equation}\label{eqn:filtration-BBO}
{\emptyset} = X_{-1} \subsetneq X_0 \subsetneq \cdots \subsetneq X_n = X
\end{equation}
with $Z_m \subseteq  W_m := (X_m \setminus X_{m-1})$ and maps 
${\phi}_m : W_m \to Z_m$ for $0 \le m \le n$ 
which are all vector bundles such that the inclusions
$Z_m \inj W_m$ are the 0-section embeddings.
It is clear that a $G$-filtrable scheme is also filtrable.
We prove the following decomposition formula for the 
equivariant $G$-theory of filtrable schemes.

\begin{thm}\label{thm:filter-Gen}
Let $G$ be a linear algebraic group over $k$ and let $X \in \Sch^G_k$ be 
$G$-filtrable as in ~\eqref{eqn:filtration-BB}.
Then there is an $R(G)$-module decomposition
\begin{equation}\label{eqn:filtration-BB0}
 G^G_*(X) \xrightarrow{\simeq} \
\stackrel{n}{\underset{m=0}{\oplus}} G^G_*(Z_m).
\end{equation}
\end{thm}  
\begin{proof}
We prove the theorem by induction on the length of the filtration.
For $n = 0$, the inclusion $Z_0 \inj X_0$ is the
0-section embedding of the $G$-equivariant vector bundle
$X = X_0 \xrightarrow{{\phi}_0} Z_0$. Hence, the theorem follows from the
homotopy invariance. 

We now assume by induction that $1 \le m \le n$ and 
we have an $R(G)$-module decomposition 
\begin{equation}\label{eqn:split0}
G^G_*\left(X_{m-1}\right) \xrightarrow{\simeq} \
\stackrel{m-1}{\underset{j=0}{\oplus}} G^G_*\left(Z_j\right).
\end{equation}

The localization exact sequence for the inclusions $i_{m-1} : X_{m-1} \inj X_m$ 
and $j_{m} : W_{m} \inj X_m$ of the $G$-invariant closed and open 
subschemes yields a long exact sequence of $R(G)$-linear maps

\begin{equation}\label{eqn:split*1}
\cdots \to G^G_p\left(X_{m-1}\right) \xrightarrow{i_{(m-1)*}}
G^G_p\left(X_m\right) \xrightarrow{j^*_m}
G^G_p\left(W_m\right) \xrightarrow{\partial} G^G_{p-1}\left(X_{m-1}\right) \to
\cdots.
\end{equation}
Using ~\eqref{eqn:split0}, it suffices now to construct an
$R(G)$-linear splitting 
of the pull-back $j^*_m$ in order to prove the theorem.  

Let $V_m \subset W_m \times Z_m$ be the graph of the projection 
$W_m \xrightarrow{{\phi}_m} Z_m$ and let $Y_m$ 
denote the closure of $V_m$ in $X_m \times Z_m$.  
Then $Y_m$ is a $G$-invariant closed subset of $X_m \times Z_m$ and $V_m$
is $G$-invariant and open in $Y_m$. 
We consider the composite maps 
\begin{equation}\label{eqn:split01}
p_m : V_m \inj W_m \times Z_m \to W_m, \ \ 
q_m : V_m \inj W_m \times Z_m \to Z_m \ \ {\rm and} 
\end{equation}
\[
{\ov{p}}_m : Y_m \inj X_m \times Z_m \to X_m, \ \ 
{\ov{q}}_m : Y_m \to X_m \times Z_m \to Z_m 
\]
in $\Sch^G_k$. Note that ${\ov{p}}_m$ is a projective morphism since $Z_m$ is 
projective. The map $q_m$ is smooth and $p_m$ is an isomorphism. 

We consider the diagram 
\begin{equation}\label{eqn:split1}
\xymatrix{
{G^G_*\left(Z_m\right)} \ar[r]^{{\ov{q}}^*_m} 
\ar[d]_{{\phi}^*_m}^{\simeq} &
{G^G_*\left(Y_m\right)} \ar[d]^{{{\ov{p}}_m}_*} \\
{G^G_*\left(W_m\right)} & 
{G^G_*\left(X_m\right)} \ar[l]^{j^*_m}.}
\end{equation} 
The map ${\ov{q}}^*_m$ is the composite $K^G_*(Z_m) \to K^G_*(Y_m) \to 
G^G_*(Y_m)$, where one identifies $K^G_*(Z_m)$ and $G^G_*(Z_m)$ since $Z_m$ is
smooth. The map ${{\phi}^*_m}$ is an isomorphism by the homotopy invariance. 
It suffices to show that this diagram commutes. For, the map 
$s_m : = {{\ov{p}}_m}_* \circ {\ov{q}}^*_m \circ
{{\phi}^*_m}^{-1}$ will then give the desired splitting of the map
$j^*_m$. Note that $s_m$ is $R(G)$-linear since so are all the maps in
~\eqref{eqn:split1}.

We now consider the commutative diagram in $\Sch^G_k$:
\[
\xymatrix@C3pc{
X_m & W_m \ar[l]_{j_m}& \\
Y_m \ar[u]^{{\ov{p}}_m} \ar[dr]_{{\ov{q}}_m} & V_m \ar[u]_{p_m} \ar[d]^{q_m}
\ar[l]_{{\ov{j}}_m} & W_m \ar[ul]_{id} 
\ar[l]^{(id, {\phi}_m)} \ar[dl]^{{\phi}_m} \\
& Z_m. & }
\]
Since the top left square is Cartesian with ${\ov{p}}_m$ projective and 
$j_m$ an open immersion, it follows that $j^*_m \circ {{\ov{p}}_m}_*  = 
{p_m}_* \circ {\ov{j}}^*_m$. 
Now, using the fact that $(id, {\phi}_m)$ is an isomorphism, 
we get 
\[
\begin{array}{lllll}
j^*_m \circ {{\ov{p}}_m}_* \circ {\ov{q}}^*_m & = & 
{p_m}_* \circ {\ov{j}}^*_m \circ {\ov{q}}^*_m & = &  
{p_m}_* \circ q^*_m \\
& = & {p_m}_* \circ {(id, {\phi}_m)}_* \circ {(id, {\phi}_m)}^* \circ
q^*_m & = & {id}_* \circ {\phi}^*_m \\
& = &  {\phi}^*_m. & &  
\end{array} 
\]
This proves the commutativity of ~\eqref{eqn:split1} and hence the theorem.
\end{proof}

We end this section with the following elementary result which will be used 
often in this text without an explicit mention. 
For a regular closed immersion $Y \inj X$, let $N_{Y/X}$ denote the 
normal bundle of $Y$ under this embedding.
Consider a Cartesian diagram of schemes
\begin{equation}\label{eqn:Tor-ind-K0}
\xymatrix@C.5pc{
Y' \ar[r]^{f'} \ar[d]_{g'} & X' \ar[d]^{g} \\
Y \ar[r]_{f} & X.}
\end{equation}

We shall say that this square is transverse, if the horizontal
maps are regular closed immersions and the map $N_{{Y'}/{X'}} \to
(g')^*(N_{{Y}/{X}})$ is an isomorphism.
One example of transverse squares that we shall often encounter in this
text is when $f$ is the 0-section embedding of a vector bundle
$\phi: X \to Y$ and $X' = (g')^*(X)$.

\begin{lem}\label{lem:Tor-ind-K}
Given a commutative square as in ~\eqref{eqn:Tor-ind-K0}, the following are
true.
\begin{enumerate}
\item
If ~\eqref{eqn:Tor-ind-K0} is a Cartesian square in $\Sch^G_{{free}/k}$, then
the resulting square of quotients is also Cartesian. If any of the maps
in this diagram is proper (resp. flat), then the map of 
quotients is also proper (resp. flat).  
\item
If $g$ is proper and either $f$ is  
flat or the square is transverse, then it is Tor-independent.
In particular, one has $f^* \circ g_* = g'_* \circ f'^*$ in
$\Hom(G_*(X'), G_*(Y))$.
\end{enumerate}
\end{lem}
\begin{proof}
The first part is elementary and follows easily from the smooth descent
for proper and flat maps.
Recall that the square  ~\eqref{eqn:Tor-ind-K0} is Tor-independent if
the sheaf $\Tor^{\sO_X}_i(\sO_{X'}, \sO_Y) = 0$ for $i > 0$.
This is obviously true if $f$ is flat.
Suppose now that ~\eqref{eqn:Tor-ind-K0} is transverse, and
let $\sF_{\bullet} \to \sO_Y$ be the Koszul resolution of $\sO_Y$ as
an $\sO_X$-module. It is then easy to check using transversality that
$\sF_{\bullet} {\underset{\sO_X}\otimes}\ \sO_{X'} \to \sO_{Y'}$
is the Koszul resolution of $\sO_{Y'}$ as an $\sO_{X'}$-module and this
implies Tor-independence. The claim about the
equality of two composite maps in $G$-theory now follows from
\cite[Proposition~5.13]{Srinivas}. 
\end{proof}

\section{$\sK$-theory of filtrable Borel spaces}
\label{section:EKT-S}
Let $G$ be any linear algebraic group over $k$ and let 
$\rho = \left(V_i, U_i\right)_{i \ge 1}$ be an admissible gadget for $G$
given by a good pair $(V,U)$.
Then for any $X \in \Sch^G_k$ and $i \ge 1$, 
there are natural maps $X \stackrel{G}{\times} U_i \stackrel{s_i}{\inj} 
X \stackrel{G}{\times} (U_i \oplus V) \stackrel{t_i}{\inj} 
X \stackrel{G}{\times} U_{i+1}$, where the first map is the 0-section of a
vector bundle and the second map is an open immersion.
In particular, the map $f_{X,i} :X^i(\rho) \inj X^{i+1}(\rho)$
is the composition of a 0-section embedding and an open inclusion.

It follows that there is a pro-$R(G)$-module 
$``{{\underset{i}\varprojlim}}" \ G_p(X^i(\rho))$ with structure maps
$f^*_{X,i}: G_p(X^{i+1}(\rho)) \to G_p(X^i(\rho))$ for any $p \ge 0$.
Moreover, it follows from Lemma~\ref{lem:Tor-ind-K} that the functor $X \mapsto 
``{{\underset{i}\varprojlim}}" \ G_p(X^i(\rho))$ 
is covariant for proper maps and contravariant for flat
maps in $\Sch^G_k$. The  pro-$R(G)$-module  
$``{{\underset{i}\varprojlim}}" \ K_p(X^i(\rho))$ is contravariant
for all maps in $\Sch^G_k$ and covariant for smooth and proper maps.
We define the {\sl $\rho$-dependent} $\sK$-theory and $\sG$-theory by 
\begin{equation}\label{eqn:LocalSK}
\sG^{\rho}_p(X_G) \ := \ {\underset{i}\varprojlim} \ G_p\left(X^i(\rho)\right); 
\ \ \ 
\sK^{\rho}_p(X_G) \ := \ {\underset{i}\varprojlim} \ K_p\left(X^i(\rho)\right).
\end{equation}

We specify the admissible pair $\rho$ because
we do not know if these are independent of $\rho$ if $X$ is not smooth.
We see that the assignment $X \mapsto \sG^{\rho}_*(X_G)$ is covariant for
proper maps and contravariant for flat maps in $\Sch^G_k$.
We want to prove an analogue of Theorem~\ref{thm:filter-Gen} for
these groups.

Let $\rho = \left(V_i, U_i\right)$ be an admissible gadget for $G$ such that
each ${U_i}/G$ is projective. Let $X \in \Sch^G_k$ be a $G$-filtrable scheme 
with the filtration given by ~\eqref{eqn:filtration-BB}. We set 
\[
X^i = X \stackrel{G}{\times} U_i, \ X^i_m = X_m \stackrel{G}{\times} U_i, \
W^i_m = W_m \stackrel{G}{\times} U_i \ {\rm and} \ 
Z^i_m = Z_m \stackrel{G}{\times} U_i.
\]

Given the $G$-equivariant filtration of $X$ as in ~\eqref{eqn:filtration-BB}, 
it is easy to see that for each $i \ge 1$, there is an 
associated system of filtrations 
\begin{equation}\label{eqn:filter-Equiv1}
{\emptyset} = X_{-1}^i \subsetneq X_0^i \subsetneq \cdots \subsetneq X_n^i = X^i
\end{equation}
with $Z^i_m \subseteq W^i_m = (X_m^i \setminus X_{m-1}^i)$ and maps 
${\phi}^i_m : W_m^i \to Z_m^i$ for 
$0 \le m \le n$ which are all vector bundles. Moreover, 
as $G$ acts trivially on each $Z_m$, we have that 
$Z_m^i \simeq Z_m \times \left({U_i}/G\right)$ is smooth and projective
since $Z_m$ is smooth and projective. We conclude that the filtration
~\eqref{eqn:filter-Equiv1} of each $X^i$ satisfies all the conditions of 
Theorem~\ref{thm:filter-Gen}. In other words, each $X^i_m$ is filtrable.
In particular, there are split exact sequences 
\begin{equation}\label{eqn:ft*}
0 \to G_*\left(X_{m-1}^i\right) \to  G_*\left(X_{m}^i\right)
\to  G_*\left(W_{m}^i\right) \to 0
\end{equation}
for all $0 \le m \le n$ and $i \ge 1$ by Theorem~\ref{thm:filter-Gen}.
Let $Y_m \inj X_m \times Z_m$ be as in ~\eqref{eqn:split01}.
One can check that the map $Y^i_m \to X^i_m$ is projective ({\sl cf.}
\cite[Lemma~5.1]{Krishna3}).

\enlargethispage{50pt}
\begin{lem}\label{lem:filter-commute}
Let $X \in \Sch^G_k$ be a $G$-filtrable scheme. Then for all $0 \le m \le n$ and
$i \ge 1$, the diagram
\begin{equation}\label{eqn:filt-comm1}
\xymatrix@C.5pc{
{G_*\left(Z_m^{i+1}\right)} 
\ar@/^1pc/[rr]^{{\ov{q}}^*_m} 
\ar[dd]_{{\phi}^*_m}^{\simeq} \ar[dr]^{f^*_{Z_m,i}} & &
{G_*\left(Y^{i+1}_m\right)} \ar[dd]^{{{\ov{p}}_m}_*} \ar[dr]^{f^*_{Y_m,i}} & \\
& {G_*\left(Z_m^i\right)} \ar@/_1pc/[rr]_<<<<<<<<{{\ov{q}}^*_m} 
\ar[dd]_{{\phi}^*_m}^{\simeq} & & {G_*\left(Y_m^i\right)} 
\ar[dd]^{{{\ov{p}}_m}_*} \\
{G_*\left(W_m^{i+1}\right)} \ar[dr]_{f^*_{W_m,i}} & &
{G_*\left(X^{i+1}_m\right)} \ar@/^1pc/[ll]^<<<<{j^*_m} \ar[dr]^{f^*_{X_m,i}} & \\
& {G_*\left(W_m^i\right)} & & {G_*\left(X^i_m\right)}
\ar@/^1pc/[ll]^{j^*_m}}
\end{equation} 
commutes.
\end{lem}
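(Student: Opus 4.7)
The plan is to regard the diagram as a prism whose two horizontal faces are copies of the square \eqref{eqn:split1} applied to the filtrable schemes $X^{i+1}$ and $X^i$ respectively, and whose four vertical faces are connected by the pullback maps $f^*_{-,i}$ coming from the inclusions induced by $U_i \hookrightarrow U_i \oplus V \hookrightarrow U_{i+1}$. The two horizontal squares commute by Theorem~\ref{thm:filter-Gen}, so it suffices to verify the commutativity of the four vertical faces.

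Three of the four vertical faces involve only pullback maps. Indeed, $\phi^*_m$ is the flat pullback along the vector bundle $W^{\cdot}_m \to Z^{\cdot}_m$, $j^*_m$ is the pullback along the open immersion $W^{\cdot}_m \hookrightarrow X^{\cdot}_m$, and $\overline{q}^*_m$ is the pullback along the smooth morphism $Y^{\cdot}_m \to Z^{\cdot}_m$; while each $f^*_{-,i}$ is by construction the pullback $s^*_i \circ t^*_i$ along a $0$-section composed with an open immersion. Since the underlying squares of $k$-schemes commute on the nose (they are obtained by applying the operation $(-) \stackrel{G}{\times} (-)$ termwise), contravariant functoriality of pullback on $G$-theory gives the commutativity of these three faces.

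The only nontrivial face is the one involving the proper pushforward $(\overline{p}_m)_*$, where we must show
\[
f^*_{X_m,i} \circ (\overline{p}_m)_* \ = \ (\overline{p}_m)_* \circ f^*_{Y_m, i}.
\]
For this I would invoke base change along the Cartesian square
\[
\xymatrix@C1.4pc@R1.4pc{
Y^i_m \ar[r] \ar[d]_{\overline{p}_m} & Y^{i+1}_m \ar[d]^{\overline{p}_m} \\
X^i_m \ar[r] & X^{i+1}_m,
}
\]
in which the horizontal arrows factor as a $0$-section of a vector bundle followed by an open immersion. Base change along the open immersion is automatic by flatness, and base change along the $0$-section holds because the relevant vector bundle on $X^{i+1}_m$ pulls back along $\overline{p}_m$ to the corresponding vector bundle on $Y^{i+1}_m$ (so the $0$-section and $\overline{p}_m$ are Tor-independent, the $0$-section being a regular closed immersion of finite Tor-dimension).

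The main obstacle is this base change step for the pushforward; everything else is bookkeeping with functoriality of pullback. Once these four vertical squares are in hand, concatenating them with the two horizontal squares from Theorem~\ref{thm:filter-Gen} yields the commutativity of the entire prism, which is exactly the assertion of the lemma.
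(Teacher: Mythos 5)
Your overall strategy is the paper's own: regard \eqref{eqn:filt-comm1} as a cube whose front and back faces are the square \eqref{eqn:split1} at levels $i+1$ and $i$ (commuting by the proof of Theorem~\ref{thm:filter-Gen}), and then check the remaining faces by compatibility of the structure maps $f^*_{-,i}$ with pullbacks and with the proper pushforward $({\ov{p}}_m)_*$. Your base-change treatment of the pushforward face (flat base change for the open immersion, Tor-independence for the $0$-section, or equivalently base change against the vector bundle projection whose inverse defines $s_i^*$) is exactly what the paper packages into the assertion that the inverse systems $\{G_*((-)^i)\}$ are covariant for proper maps, so that part is sound.

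There is, however, one genuine flaw: you justify the face involving ${\ov{q}}^*_m$ by declaring $Y^{\cdot}_m \to Z^{\cdot}_m$ to be a smooth morphism and invoking contravariant functoriality of pullback on $G$-theory. That is not correct. The smooth map is $q_m : V_m \to Z_m$; the scheme $Y_m$ is the closure of the graph $V_m$ inside $X_m \times Z_m$, and the projection ${\ov{q}}_m : Y_m \to Z_m$ need not be smooth, nor even flat (and $Y_m$, like $X_m$, may be singular). Hence there is no flat pullback ${\ov{q}}^*_m$ on $G$-theory to which your functoriality argument could apply; in the paper ${\ov{q}}^*_m$ is by definition the composite $G_*(Z^i_m) \cong K_*(Z^i_m) \to K_*(Y^i_m) \to G_*(Y^i_m)$, which uses only the smoothness of $Z^i_m$ and the contravariance of $K$-theory for arbitrary maps. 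The face in question does commute, but the correct justification is the contravariant functoriality of the inverse systems $\{K_*((-)^i)\}$ together with the compatibility of the natural transformation $K_* \to G_*$ and of the identification $K_*(Z^i_m) \cong G_*(Z^i_m)$ with the structure maps $f^*_{-,i}$. With that repair your argument coincides with the paper's proof.
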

\begin{proof}
We have shown in the proof of Theorem~\ref{thm:filter-Gen}
({\sl cf.} ~\eqref{eqn:split1}) that the front and the back squares commute.
The right, left and the bottom squares commute by the covariant and
contravariant functoriality of the inverse systems $\left\{G_*(X^i_m)\right\}$
({\sl cf.} Lemma~\ref{lem:Tor-ind-K}).
The top square commutes by the contravariant property of
the inverse systems $\left\{K_*(X^i_m)\right\}$ and the functorial
isomorphism $K_*(Z^i_m) \xrightarrow{\simeq} G_*(Z^i_m)$.
\end{proof}

Let $X \in \Sch^G_k$ be as above and consider the commutative diagram
({\sl cf.} Lemma~\ref{lem:Tor-ind-K})
\begin{equation}\label{eqn:filt-comm4}
\xymatrix@C1.8pc{
0 \ar[r] & G_*\left(X^{i+1}_{m-1}\right) 
\ar[r]^{\left({\iota^{i+1}_{m-1}}\right)_*} \ar[d]_{f^*_{X_{m-1},i}} &
G_*\left(X^{i+1}_m\right) \ar[r]^{\left({j^{i+1}_{m}}\right)^*} 
\ar[d]^{f^*_{X_{m},i}} & G_*\left(W^{i+1}_m\right) \ar[d]^{f^*_{W_{m},i}} \ar[r] & 0 \\
0 \ar[r] & G_*\left(X^{i}_{m-1}\right) 
\ar[r]_{\left({\iota^{i}_{m-1}}\right)_*} &
G_*\left(X^{i}_m\right) \ar[r]_{\left({j^{i}_{m}}\right)^*} 
& G_*\left(W^{i}_m\right) \ar[r] & 0} 
\end{equation}
of split short exact sequences ~\eqref{eqn:ft*}, where the map
${\left({j^{i}_{m}}\right)^*}$
is split by $s^i_m : = \left({\ov{p}}^i_m\right)_* \circ 
\left({\ov{q}}^i_m\right)^* \circ \left(\left(\phi^i_m\right)^*\right)^{-1}$
for each $i \ge 1$ ({\sl cf.} diagram~\eqref{eqn:filt-comm1}). We now show that 
\begin{lem}\label{lem:filt-comm5}
$s^i_m \circ f^*_{W_m,i} =  f^*_{X_m,i} \circ s^{i+1}_m$.
\end{lem}
\begin{proof}
This is equivalent to showing that
\begin{equation}\label{eqn:filt-comm6}
 f^*_{X_m,i} \circ \left({\ov{p}}^{i+1}_m\right)_* \circ 
\left({\ov{q}}^{i+1}_m\right)^* = 
\left({\ov{p}}^i_m\right)_* \circ 
\left({\ov{q}}^i_m\right)^* \circ \left(\left(\phi^i_m\right)^*\right)^{-1} 
\circ  f^*_{W_m,i} \circ \left(\phi^{i+1}_m\right)^*.
\end{equation}

However, it follows from Lemma~\ref{lem:filter-commute} that
\[
\left(\left(\phi^i_m\right)^*\right)^{-1} 
\circ f^*_{W_m,i} \circ \left(\phi^{i+1}_m\right)^*
= \left(\left(\phi^i_m\right)^*\right)^{-1} 
\circ \left(\phi^i_m\right)^* \circ f^*_{Z_m,i} 
=  f^*_{Z_m,i}.
\]

Applying Lemma~\ref{lem:filter-commute} once again, we get
\[
\begin{array}{lll}
\left({\ov{p}}^i_m\right)_* \circ 
\left({\ov{q}}^i_m\right)^* \circ \left(\left(\phi^i_m\right)^*\right)^{-1} 
\circ f^*_{W_m,i} \circ \left(\phi^{i+1}_m\right)^*
& = & 
\left({\ov{p}}^i_m\right)_* \circ 
\left({\ov{q}}^i_m\right)^* \circ f^*_{Z_m,i} \\
& = & \left({\ov{p}}^i_m\right)_* \circ f^*_{Y_m,i} \circ
\left({\ov{q}}^{i+1}_m\right)^* \\
& = & f^*_{X_m,i} \circ \left({\ov{p}}^{i+1}_m\right)_* \circ 
\left({\ov{q}}^{i+1}_m\right)^*.
\end{array}
\]
This shows ~\eqref{eqn:filt-comm6} and hence the lemma.
\end{proof}
The following is an analogue of Theorem~\ref{thm:filter-Gen}
for filtrable Borel spaces.

\begin{thm}\label{thm:filter-Equiv}
Let $G$ be a linear algebraic group and let 
$\rho = \left(V_i, U_i\right)_{i \ge 1}$
be an admissible gadget for $G$ such that each ${U_i}/G$ is projective.
Let $X \in \Sch^G_k$ be a $G$-filtrable scheme with the filtration given by
~\eqref{eqn:filtration-BB}. 
Then for every $0 \le m \le n$, the inclusion $X_{m-1} \inj X_m$
induces a split exact sequence of $R(G)$-modules

\begin{equation}\label{eqn:filter-Equiv*}
0 \to \sG^{\rho}_*\left((X_{m-1})_G\right) \xrightarrow{\iota_{m-1}^*} 
\sG^{\rho}_*\left((X_{m})_G\right) \xrightarrow{j_{m}^*}
\sG^{\rho}_*\left((W_{m})_G\right) \to 0.
\end{equation}
In particular, there is an $R(G)$-module decomposition
\begin{equation}\label{eqn:filter-Equiv*1-1}
\sG^{\rho}_*(X_G) \xrightarrow{\simeq} \
\stackrel{n}{\underset{m = 0}\oplus} \ \sG^{\rho}_*\left((Z_m)_G\right).
\end{equation}
\end{thm}
\begin{proof}
It follows from ~\eqref{eqn:filt-comm4} and the left exactness of the inverse 
limit that ~\eqref{eqn:filter-Equiv*} is exact except possibly at
the right end. But Lemma~\ref{lem:filt-comm5} shows that $j^*_m \circ s^*_m$ is 
identity on $\sG^{\rho}_*\left((W_{m})_G\right)$. This yields
the split short exact sequence ~\eqref{eqn:filter-Equiv*}.

One deduces ~\eqref{eqn:filter-Equiv*1-1} from 
~\eqref{eqn:filter-Equiv*} using induction on the length of the 
filtration. For $n = 0$, the map $X_0 \to Z_0$ is a $G$-equivariant vector 
bundle and hence each map $X^i_0 \to Z^i_0$ is a vector bundle and this induces
isomorphism on the $G$-theory. We can now argue as in the proof of
Theorem~\ref{thm:filter-Gen} to complete the proof.
\end{proof}

\subsection{Filtrable schemes for torus action}\label{subsection:TCase}
Recall that a torus $T$ over $k$ is called {\sl split} if it is isomorphic
to a finite product of the multiplicative group $\G_m$ as a $k$-group
scheme. The number of copies of $\G_m$ in $T$ is called the rank of $T$.
In this subsection, $T$ will denote a split torus over $k$.
In order to apply the above structural results, we shall use
the following fundamental result, proven by Bialynicki-Birula 
\cite{BB} when $k$ is algebraically closed and by Hesselink
\cite{Hessel} when $k$ is any field. 

\begin{thm}[Bialynicki-Birula, Hesselink]\label{thm:BBH}
Let $X$ be a smooth projective scheme with an action of a split torus $T$
over $k$. Then $X$ is $T$-filtrable.
\end{thm}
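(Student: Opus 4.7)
The plan is to invoke the classical Bialynicki-Birula construction, in the form refined by Hesselink for a split torus over a general field. The starting point is that, since $T$ acts on the smooth scheme $X$, the fixed-point subscheme $X^T$ is smooth (Iversen), and since $X$ is projective, $X^T$ is a closed subscheme of $X$, hence itself smooth and projective. Decompose $X^T = \coprod_{m=0}^{n} Z_m$ into its connected components; each $Z_m$ is a smooth projective $k$-scheme on which $T$ acts trivially.

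Next, because $T$ is split and the $T$-weights occurring in the tangent spaces $T_{Z_m} X$ at the (finitely many) fixed components form a finite subset of the character lattice of $T$, one can choose a generic one-parameter subgroup $\lambda : \G_m \to T$ with $X^{\lambda} = X^T$. For each point $x \in X$, the orbit morphism $\G_m \to X$, $t \mapsto \lambda(t)\cdot x$, extends uniquely to a morphism $\A^1 \to X$ by the valuative criterion applied to the projective scheme $X$; denote the value at $0$ by $\lim_{t \to 0} \lambda(t)\cdot x$, which necessarily lies in $X^{\lambda} = X^T$. Define the attracting set $W_m = \{x \in X \mid \lim_{t \to 0} \lambda(t) \cdot x \in Z_m\}$, equipped with the natural morphism $\phi_m : W_m \to Z_m$ sending $x$ to its limit. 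Each $W_m$ is $T$-invariant and contains $Z_m$ as its $\lambda$-fixed subset.

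The heart of the argument is to verify that $W_m$ is locally closed in $X$, that $\phi_m : W_m \to Z_m$ is a $T$-equivariant vector bundle whose zero section is precisely $Z_m \hookrightarrow W_m$, and that the $Z_m$ can be ordered so that the unions $X_m = \bigsqcup_{j \le m} W_j$ are $T$-invariant \emph{closed} subschemes of $X$. The bundle structure is obtained by smoothness: the normal bundle $N_{Z_m/X}$ decomposes under $\lambda$ into positive, zero, and negative weight subbundles; the positive part $N_{Z_m/X}^{+}$ gets identified with $W_m$ via a $T$-equivariant morphism constructed using the deformation to the normal cone together with the $\G_m$-contraction provided by $\lambda$. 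The ordering is read off from the partial order on $\{Z_m\}$ generated by $Z_{m'} \prec Z_m$ whenever $\overline{W_m} \cap Z_{m'} \neq \emptyset$; the absence of cycles is a general fact about $\G_m$-actions on proper schemes, and any linear extension to a total order makes the successive unions closed. The main obstacle is this last technical package—bundle structure, local closedness, and closedness of the $X_m$—which is precisely what Bialynicki-Birula proved in the algebraically closed case and Hesselink extended to an arbitrary field using the splitness of $T$; once it is granted, the conditions spelled out for $T$-filtrability in the paper are immediate.
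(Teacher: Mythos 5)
The paper offers no proof of this theorem at all: it is imported as a known result, with the attribution to Bialynicki-Birula (for $k$ algebraically closed) and Hesselink (for arbitrary $k$) serving as the proof, and your sketch ultimately defers the same technical core (local closedness of the attracting sets, the vector bundle structure $\phi_m : W_m \to Z_m$, and the closedness of the partial unions) to exactly those references, so your route is essentially the paper's. One small caveat: your aside that acyclicity of the closure relation is ``a general fact about $\G_m$-actions on proper schemes'' is not accurate --- filtrability can fail for smooth complete non-projective varieties (Konarski's example), and it is precisely projectivity, e.g.\ via a $T$-linearized ample line bundle whose weights on the fixed components order the $Z_m$, that Bialynicki-Birula uses to obtain the filtration by closed subschemes.
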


\subsubsection{Good admissible gadgets}\label{subsubsection:CAG}
Let $T$ be a split torus of rank $r$. 
Given a character $\chi$ of $T$, let $L_{\chi}$ denote the one-dimensional 
representation of $T$ on which it acts via $\chi$. Given a basis
$\{\chi_1, \cdots , \chi_r\}$ of the character group $\wh{T}$ of $T$ and given
$i \ge 1$, we set $V_i = \stackrel{r}{\underset{j = 1} \prod} 
L^{\oplus i}_{\chi_j}$ and $U_i = \stackrel{r}{\underset{j = 1} \prod} 
(L^{\oplus i}_{\chi_j} \setminus \{0\})$.
Then $T$ acts on $V_i$ by $(t_1, \cdots , t_r)(x_1, \cdots , x_r)
= \left(\chi_1(t_1)(x_1), \cdots , \chi_r(t_r)(x_r)\right)$.
It is then easy to see that $\rho = \left(V_i, U_i\right)_{i \ge 1}$ is an 
admissible gadget for $T$ such that ${U_i}/T \simeq \left(\P^{i-1}\right)^r$.
The line bundle $L_{\chi_j} \stackrel{T_j}{\times} 
(L^{\oplus i}_{\chi_j}\setminus \{0\}) \to \P^{i-1}$ is the line bundle
$\sO(\pm 1)$ for each $1 \le j \le r$.
An admissible gadget for $T$ of this form will be called a 
{\sl good} admissible gadget.

Let $X \in \Sch^T_k$ be a $T$-filtrable scheme with the filtration given by 
~\eqref{eqn:filtration-BB}.
Given a good admissible gadget $\rho = \left(V_i, U_i\right)_{i \ge 1}$ for 
$T$, we see that each $X^i$ has a filtration as
in ~\eqref{eqn:filter-Equiv1} such that  
${U_i}/T = \left(\P^{i-1}_k\right)^r$ for each $i \ge 1$.
In particular, a good admissible gadget satisfies the 
hypothesis of Theorem~\ref{thm:filter-Equiv}. This implies the following.

\begin{cor}\label{cor:filter-Equiv-T}
Let $X \in \Sm^T_k$ be a $T$-filtrable scheme with the filtration given by
~\eqref{eqn:filtration-BB}. 
Then there is an $R(T)$-module decomposition
\begin{equation}\label{eqn:filter-Equiv*1}
\sK_*(X_T) \xrightarrow{\simeq} \ \stackrel{n}{\underset{m = 0}\oplus} \
\sK_*\left((Z_m)_T\right).
\end{equation}
\end{cor}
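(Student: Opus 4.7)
The plan is to deduce this corollary directly from Theorem~\ref{thm:filter-Equiv} by choosing a good admissible gadget for $T$ and then translating between the local $\sG^\rho$-theory that appears in that theorem and the $\sK$-theory of the present statement.

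First, since $T$ is a split torus of rank $r$, I would take $\rho = (V_i, U_i)_{i \ge 1}$ to be a good admissible gadget as constructed in \S~\ref{subsubsection:CAG}. For such a $\rho$, each quotient $U_i/T \cong (\P^{i-1}_k)^r$ is smooth and projective, so the projectivity hypothesis of Theorem~\ref{thm:filter-Equiv} is automatic. Applying that theorem to the $T$-filtrable scheme $X$ with filtration \eqref{eqn:filtration-BB} yields the canonical $R(T)$-linear isomorphism
\[
\bigoplus_{m=0}^n \sG^\rho_*((Z_m)_T) \xrightarrow{\cong} \sG^\rho_*(X_T).
\]

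Second, I would identify the $\sG^\rho$-groups with the $\sK$-groups. Since $X \in \Sm^T_k$ and each $U_i \in \Sm^T_{\mathrm{free}/k}$, Lemma~\ref{lem:sch} implies that each mixed quotient $X^i_T = X \stackrel{T}{\times} U_i$ is a smooth scheme, so the natural map $K_p(X^i_T) \to G_p(X^i_T)$ is an isomorphism. These isomorphisms are compatible with the structure maps $f^*_i$ of the inverse systems, since on smooth schemes both $K$- and $G$-theoretic pullbacks are defined for arbitrary morphisms and agree. Passing to the inverse limit therefore gives $\sK^\rho_*(X_T) \cong \sG^\rho_*(X_T)$, and Corollary~\ref{cor:Stable-Ind**} identifies this common object with the well-defined $\sK_*(X_T)$ of Definition~\ref{defn:Limit-K}. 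The same argument applies to each $Z_m$: it is smooth and projective, and since $T$ acts trivially on it, $Z_m^i = Z_m \times (U_i/T)$ is smooth, so $\sG^\rho_*((Z_m)_T) \cong \sK_*((Z_m)_T)$.

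Combining these identifications with the decomposition supplied by Theorem~\ref{thm:filter-Equiv} produces the desired isomorphism, with $R(T)$-linearity inherited directly from that theorem. I do not expect any real obstacle here: the genuine content, namely the compatibility \eqref{eqn:filt-comm5} of the splittings $s^i_m$ with the transition maps in the inverse system, has already been established in the proof of Theorem~\ref{thm:filter-Equiv}. The corollary's role is simply to specialize to the split torus case, where good admissible gadgets with projective quotients $U_i/T$ are available on the nose, and to observe that smoothness of $X$ and of the fixed-point components lets one pass freely between $G$-theory and $K$-theory in the inverse limit.
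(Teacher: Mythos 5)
Your proposal is correct and follows essentially the same route as the paper: the paper's justification is precisely the remark that a good admissible gadget for a split torus has projective quotients ${U_i}/T \cong (\P^{i-1})^r$, so Theorem~\ref{thm:filter-Equiv} applies. Your additional step identifying $\sG^{\rho}_*$ with $\sK_*$ via smoothness of the mixed quotients $X^i_T$ and the $\rho$-independence from Corollary~\ref{cor:Stable-Ind**} only makes explicit what the paper leaves implicit.
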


We end this section with the following result which compares the
$K$-theory and $\sK$-theory of motivic Borel spaces with torus action.
This will be generalized to the case of all connected split reductive
groups in \S~\ref{section:KBR}.

\begin{prop}\label{prop:MLTorus}
Let $X \in \Sm^T_k$ be $T$-filtrable and let $p \ge 0$.
Then for any admissible gadget 
$\rho = \left(V_i, U_i\right)_{i \ge 1}$ for $T$, the pro-abelian group
$``{{\underset{i}\varprojlim}}" \ K_p\left(X^i_T(\rho)\right)$
satisfies the Mittag-Leffler condition. In particular, the map 
\[
\tau^T_X :K_p(X_T) \to \sK_p(X_T) 
\]
is an isomorphism.
\end{prop}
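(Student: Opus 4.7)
The plan is to reduce the Mittag-Leffler claim to a computation controlled by the projective bundle formula. By Corollary~\ref{cor:Stable-Ind**}(2), the Mittag-Leffler property of the system $\{K_p(X^i_T(\rho))\}_{i \ge 1}$ is independent of the choice of admissible gadget, so I may work with a good admissible gadget $\rho$ as constructed in \S\ref{subsubsection:CAG}. For such a $\rho$ one has $U_i/T = (\P^{i-1})^r$, and unwinding the definitions the composite
$U_i/T \hookrightarrow (U_i \oplus V)/T \hookrightarrow U_{i+1}/T$
is (after collapsing the vector bundle $(U_i\oplus V)/T \to U_i/T$ by $\A^1$-homotopy) the product of standard hyperplane inclusions $(\P^{i-1})^r \hookrightarrow (\P^i)^r$.

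Since $X$ is smooth, each $X^i_T$ is smooth, and by the remarks preceding Corollary~\ref{cor:filter-Equiv-T} each $X^i_T$ is filtrable with strata $W^i_m$ that are vector bundles over $Z^i_m = Z_m \times (\P^{i-1})^r$. The non-equivariant case of Theorem~\ref{thm:filter-Gen} therefore supplies a canonical decomposition
\[
K_p(X^i_T) \ \cong \ \stackrel{n}{\underset{m=0}{\bigoplus}} K_p\bigl(Z_m \times (\P^{i-1})^r\bigr),
\]
and the strict compatibility of these decompositions with the transitions $f^*_i$ of the tower is exactly what is assembled in Lemma~\ref{lem:filter-commute} together with the identity~\eqref{eqn:filt-comm5}. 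Hence the pro-abelian group $\{K_p(X^i_T)\}_{i \ge 1}$ is a finite direct sum of the pro-abelian groups $\{K_p(Z_m \times (\P^{i-1})^r)\}_{i \ge 1}$, and it is enough to verify the Mittag-Leffler condition on each of these summands.

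For each $m$, the transition in the $m$-th subsystem is the pullback along the embedding $Z_m \times (\P^{i-1})^r \hookrightarrow Z_m \times (\P^i)^r$ identified in the first paragraph. Iterating the projective bundle formula presents $K_p(Z_m \times (\P^i)^r)$ as a free $K_p(Z_m)$-module with basis the monomials $[\sO(1)]^{a_1} \otimes \cdots \otimes [\sO(1)]^{a_r}$ with $0 \le a_j \le i$, and the pullback sends each such monomial to the corresponding monomial in $K_p(Z_m \times (\P^{i-1})^r)$, where $[\sO(1)]^i$ becomes zero. This is plainly surjective, so every transition in the tower is surjective and the Mittag-Leffler condition holds. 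Consequently ${\underset{i}\varprojlim}^1 \ K_{p+1}(X^i_T) = 0$, and the Milnor exact sequence~\eqref{eqn:MES} then yields the asserted isomorphism $\tau^T_X: K_p(X_T) \xrightarrow{\cong} \sK_p(X_T)$.

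The only subtle point is guaranteeing that the finite-level decomposition of Theorem~\ref{thm:filter-Gen} genuinely respects the tower structure of the Borel spaces; once this compatibility has been arranged via Lemma~\ref{lem:filter-commute} and displays~\eqref{eqn:filt-comm5}--\eqref{eqn:filt-comm6}, the remaining argument is a direct application of the projective bundle formula.
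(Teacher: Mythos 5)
Your proof is correct and is essentially the paper's own argument: both reduce to a good admissible gadget via Corollary~\ref{cor:Stable-Ind**}, both rest on the compatibility of the filtration splittings with the tower (Lemma~\ref{lem:filter-commute} and~\eqref{eqn:filt-comm5}), homotopy invariance, and the projective bundle formula on $Z_m\times(\P^{i-1})^r$ to obtain surjective transition maps, and both conclude with the Milnor sequence~\eqref{eqn:MES}; the paper merely organizes the surjectivity step as an induction on the length of the filtration (diagram~\eqref{eqn:MLTorus1}) rather than as a direct-sum decomposition of the pro-system. (One cosmetic slip: $[\sO(1)]^i$ is not zero in $K_0(\P^{i-1})$ --- the relation there is $(1-[\sO(-1)])^i=0$ --- but the restriction map is surjective regardless, so your conclusion stands.)
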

\begin{proof}
In view of Theorem~\ref{thm:Stable-Ind**}, it is enough to prove the
proposition when $\rho$ is a good admissible gadget for $T$.
We shall prove in this case the stronger assertion that for any 
$T$-filtrable scheme
$X \in \Sch^T_k$, each map $f^*_{X,i}$ in the inverse system
$\left\{G_p\left(X^i_T(\rho)\right), f^*_{X,i} \right\}_{i \ge 1}$ is surjective.

Let us consider the $T$-equivariant filtration of $X$ as in 
~\eqref{eqn:filtration-BB} and the associated system of filtrations 
~\eqref{eqn:filter-Equiv1} for each $X^i = X^i_T(\rho)$. 
We prove our surjectivity 
assertion by induction on the length of the filtration.

For $0 \le m \le n$, there is a commutative diagram 
of $G$-theory of smooth schemes
\begin{equation}\label{eqn:MLTorus0}
\xymatrix@C2.9pc{
G_*\left(Z^{i+1}_m\right) \ar[r]^{f^*_{Z_m,i}} \ar[d]_{\simeq} & 
G_*\left(Z^{i}_m\right) \ar[d]^{\simeq} \\
G_*\left(W^{i+1}_m\right) \ar[r]_{f^*_{W_m,i}} & 
G_*\left(W^{i}_m\right)}
\end{equation}
where all the vertical arrows
are isomorphisms by the homotopy invariance. 

Next, we observe that $T$ acts trivially on each $Z_m$ and hence
there is an isomorphism 
$Z_m^i \simeq Z_m \times ({U_i}/T) \simeq Z_m \times \left(\P^{i-1}\right)^r$.
Hence the projective bundle formula for the $G$-theory of smooth schemes
implies that the map
$G_*\left(Z^{i+1}_m\right) \xrightarrow{f^*_{Z_m,i}} G_*\left(Z^{i}_m\right)$ is 
surjective. We conclude from ~\eqref{eqn:MLTorus0} that each $f^*_{W_m,i}$ is 
surjective.
Taking $m =0$, we see in particular that the map
$G_*(X^{i+1}_0) \to G_*(X^i_0)$ is surjective for all $i \ge 1$.

Assume now that $m \ge 1$ and that the surjectivity assertion holds for all
$j < m$. We look at the commutative diagram~\eqref{eqn:filt-comm4}.
We have shown above that the right vertical arrow in that diagram is 
surjective. The left vertical arrow is surjective by induction. We 
conclude that the middle vertical arrow in diagram~\eqref{eqn:filt-comm4}
is also surjective.
This completes the proof of the Mittag-Leffler condition.
The isomorphism of $\tau^T_X$ follows from the Mittag-Leffler
condition and the Milnor exact sequence ~\eqref{eqn:MES}.
\end{proof}

\section{$K$-theory of Borel spaces for reductive groups}
\label{section:KBR}
Our aim in this section is to prove the analogue of 
Proposition~\ref{prop:MLTorus} for any connected split reductive group.
This is achieved by reducing the problem to the case of torus using 
the push-pull operators which we now discuss.

\subsection{Push-pull operators in Equivariant $K$-theory}\label{PPO}
Let $G$ be a connected reductive group with a (not necessarily split) maximal 
torus $T$.
Let $B$ be a Borel subgroup of $G$ containing $T$.  
For any $X \in \Sch^G_k$, we know that there is
a functorial {\sl restriction} map $r^G_{T,X} : G^G_*(X) \to G^T_*(X)$ and it is
known ({\sl cf.} \cite[Theorem~1.13]{Thomason1})
that there is an isomorphism

\begin{equation}\label{eqn:Borel-T}
r^B_{T,X}: G^B_*(X) \xrightarrow{\simeq} G^T_*(X).
\end{equation}

Since $X$ is a $G$-scheme, the map
$X \stackrel{B}{\times} G \to X \times G/B$ is an isomorphism of $G$-schemes.
In particular, there is a $G$-equivariant smooth and projective map
$p_X: X \stackrel{B}{\times} G \to X$. 
The projective push-forward yields a natural {\sl induction} map
$s^G_{T,X} : G^T_*(X) \to G^G_*(X)$. 
Note that the restriction map $r^G_{T,X}$ is same as the flat pull-back
$G^G_*(X) \to G^G_*(X \stackrel{B}{\times} G) \simeq G^T(X)$.
In particular, $r^G_{T,X}$ and $s^G_{T,X}$
satisfy the usual projection formula and hence they are $K^G_*(X)$-linear.
The following result was proven by Thomason.

\begin{thm}[{\cite[Theorem~1.13]{Thomason1}}]\label{thm:PPEquiv}
For $X \in \Sch^G_k$, the composite map 
\[
G^G_*(X) \xrightarrow{ r^G_{T,X}} G^T_*(X) \xrightarrow{s^G_{T,X}}
G^G_*(X)
\]
is identity.
\end{thm}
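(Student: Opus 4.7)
The plan is to identify the composite $s^G_{T,X} \circ r^G_{T,X}$ with the pushforward--pullback operator $p_{X*} \circ p_X^*$ along the smooth projective morphism $p_X \colon X \stackrel{B}{\times} G \to X$, then evaluate this composite by the projection formula together with the vanishing of the higher cohomology of the structure sheaf of the flag variety $G/B$.

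First I would set up the identifications of the two maps. Using the Morita equivalence of Theorem~\ref{thm:Morita}, the category of $G$-equivariant sheaves on $X \stackrel{B}{\times} G \cong X \times G/B$ corresponds to the category of $B$-equivariant sheaves on $X$, and under this identification the pullback $p_X^*$ agrees with the forgetful restriction $G^G_*(X) \to G^B_*(X)$; composing with the isomorphism ~\eqref{eqn:Borel-T} yields $r^G_{T,X}$. Dually, the pushforward $p_{X*}$ along the $G$-equivariant smooth projective morphism $p_X$ is identified, under Morita, with the induction $s^G_{T,X} \colon G^T_*(X) \cong G^B_*(X) \to G^G_*(X)$. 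Hence the composite in question is literally $p_{X*} \circ p_X^*$.

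Next I would apply the projection formula, which is available because $p_X$ is $G$-equivariant proper of finite Tor-dimension: for any $\alpha \in G^G_*(X)$,
\[
p_{X*}\bigl(p_X^*(\alpha)\bigr) \; = \; p_{X*}\bigl(p_X^*(\alpha) \cdot [\sO_{X \stackrel{B}{\times} G}]\bigr) \; = \; \alpha \cdot p_{X*}\bigl([\sO_{X \stackrel{B}{\times} G}]\bigr)
\]
in $G^G_*(X)$, where the $K^G_0(X)$-module structure is being used. Thus everything reduces to computing the single class $p_{X*}(1) \in K^G_0(X)$.

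Finally, since $p_X$ is obtained by base change from the structure map $G/B \to \Spec k$, flat base change gives $R^i p_{X*} \sO_{X \stackrel{B}{\times} G} \cong H^i(G/B, \sO_{G/B}) \otimes_k \sO_X$ for every $i \ge 0$. The classical fact that $H^0(G/B, \sO_{G/B}) = k$ and $H^i(G/B, \sO_{G/B}) = 0$ for $i > 0$ (a consequence of the Bruhat cell decomposition, or of Kempf vanishing) then forces $p_{X*}(1) = [\sO_X] = 1$, so that $s^G_{T,X} \circ r^G_{T,X} = \mathrm{id}$. The main obstacle is the cleanness of the Morita-theoretic identification of the two maps with $p_X^*$ and $p_{X*}$, since one must verify that the $R(G)$-linear restriction and induction maps defined abstractly really correspond to the pullback and projective pushforward along $p_X$; once this bookkeeping is done the cohomological input on $G/B$ finishes the proof immediately.
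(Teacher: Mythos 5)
The paper itself offers no proof of this statement: it is quoted directly from Thomason \cite[Theorem~1.13]{Thomason1}, so the only comparison available is with Thomason's original argument, and your proposal is essentially that argument. The identification of the composite with $p_{X*}\circ p_X^*$ along $p_X : X\stackrel{B}{\times}G \cong X\times G/B \to X$ requires no work here, since in \S~\ref{PPO} the paper \emph{defines} $s^G_{T,X}$ as the projective push-forward along $p_X$ (after the identifications $G^T_*(X)\cong G^B_*(X)\cong G^G_*(X\stackrel{B}{\times}G)$ of ~\eqref{eqn:Borel-T} and Theorem~\ref{thm:Morita}) and notes that $r^G_{T,X}$ is the flat pull-back $p_X^*$; so the bookkeeping you flag as the main obstacle is already built into the definitions. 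Two points in your reduction deserve care. First, since $X$ may be singular, the class $\alpha$ lies in $G^G_*(X)$ rather than $K^G_*(X)$, so the projection formula you need is the bivariant one, $p_{X*}\bigl(\gamma\cdot p_X^*(\alpha)\bigr)=p_{X*}(\gamma)\cdot\alpha$ with $\gamma\in K^G_*(X\stackrel{B}{\times}G)$ pushed forward in $K$-theory (legitimate because $p_X$ is smooth and projective, hence proper of finite Tor-dimension) and $\alpha\in G^G_*(X)$; read literally, your display treats the $G$-theory class $\alpha$ as the module element, which is the wrong-way version, though taking $\gamma=[\sO]$ in the correct formula gives exactly $p_{X*}p_X^*(\alpha)=p_{X*}([\sO])\cdot\alpha$ as you intend. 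Second, the vanishing $H^i(G/B,\sO_{G/B})=0$ for $i>0$ follows from the Bruhat decomposition only via Hodge theory in characteristic zero; in positive characteristic it is precisely Kempf's vanishing theorem, which you correctly invoke, and the resulting isomorphism $Rp_{X*}\sO\cong\sO_X$ is automatically $G$-equivariant because it is induced by the unit map $\sO_X\to Rp_{X*}p_X^*\sO_X$. With these clarifications your proof is complete and agrees with the proof in Thomason's paper that the present text cites.
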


\enlargethispage{50pt}

\subsection{Push-pull operators in $\sK$-theory}\label{PPO-S}
Our next goal is to construct the above restriction and induction maps
for the $\sK$-theory of the Borel spaces. 

\begin{lem}\label{lem:ELM-Com}
Let $f:X \to Y$ be a morphism in $\Sch^G_{{free}/k}$. Then the diagram
\begin{equation}\label{eqn:ELM-0}
\xymatrix@C.9pc{
X/B \ar[r] \ar[d] & X/G \ar[d] \\
Y/B \ar[r] & Y/G}
\end{equation}
 of quotients is Cartesian such that the horizontal maps are smooth and 
projective
in $\Sch_k$. If $f$ is a regular closed immersion (resp. of
finite Tor-dimension), then so are the  vertical maps in
~\eqref{eqn:ELM-0}.
\end{lem}
\begin{proof}
Since every principal bundle is {\'e}tale locally trivial, 
the top horizontal map is an {\'e}tale locally trivial smooth fibration with 
fiber $G/B$. Hence this map is proper by the descent property of properness.
Since this map is also quasi-projective, it must be projective. The same holds
for the bottom horizontal map. 
Proving the other properties is elementary and can be shown
using the commutative diagram
\begin{equation}\label{eqn:ELM-Com0}
\xymatrix@C1.6pc{
X \ar[r] \ar[d]_{f} & X/B \ar[r] \ar[d] & X/G \ar[d] \\
Y \ar[r] & Y/B \ar[r] & Y/G.}
\end{equation}

One easily checks that the left and the big outer squares are Cartesian
such that the vertical maps are regular closed immersions
(resp. of finite Tor-dimension). 
Since all the horizontal maps are smooth and surjective, the right square
must have the similar property. The claim about the vertical maps
in ~\eqref{eqn:ELM-0} follows from the fppf-descent property
of regular closed immersions and maps of finite Tor-dimension. 
\end{proof}

\begin{prop}\label{prop:Push-Pull**}
Let $\rho = \left(V_i, U_i\right)_{i \ge 1}$ be an admissible gadget for $G$.
Then for any $X \in \Sch^G_k$ and $p \ge 0$,
there are strict morphisms of pro-$R(G)$-modules
\[
``{{\underset{i}\varprojlim}}" \ G_p(X^i_G(\rho)) \xrightarrow{{\wt{r}}^G_{T,X}}
``{{\underset{i}\varprojlim}}" \ G_p(X^i_T(\rho)) \xrightarrow{{\wt{s}}^G_{T,X}}
``{{\underset{i}\varprojlim}}" \ G_p(X^i_G(\rho)) 
\]
such that the composite ${\wt{s}}^G_{T,X} \circ {\wt{r}}^G_{T,X}$ is identity.
Furthermore, these morphisms are contravariant for flat maps (all maps
in $\Sm^G_k$) and covariant for 
proper maps in $\Sch^G_k$. These maps satisfy the projection formula
${\wt{s}}^G_{T,X}\left(x \cdot {\wt{r}}^G_{T,X}(y)\right) = {\wt{s}}^G_{T,X}(x) 
\cdot y$ for $x \in ``{{\underset{i}\varprojlim}}" \ G_p(X^i_T(\rho))$ and 
$y \in ``{{\underset{i}\varprojlim}}" \ K_p(X^i_G(\rho))$.
\end{prop}
\begin{proof}
Since the admissible gadget $\rho$ is fixed, we drop this from our
notations in this proof. 
We first observe that for any $i \ge 1$, the map $G_*(X^i_B) \to 
G_*(X^i_T)$
is an isomorphism by the homotopy invariance. Hence, we can replace the
torus $T$ with the Borel subgroup $B$ everywhere in the proof.

\enlargethispage{50pt}

For every $i \ge 1$, there are smooth and projective maps 
$\pi^i_X:X^i_B \to X^i_G$. 
These maps induce the push-forward and pull-back maps
$(\pi^i_X)_*$ and $(\pi^i_X)^*$ on the ordinary  and
equivariant $G$-theory such that we have a commutative diagram

\begin{equation}\label{eqn:Push-Pull**0} 
\xymatrix@C2.2pc{
G_*(X^i_G) \ar[r]^{(\pi^i_X)^*} \ar[d]_{\simeq} & 
G_*(X^i_B) \ar[r]^{(\pi^i_X)_*} \ar[d]_{\simeq} &
G_*(X^i_G) \ar[d]^{\simeq} \\
G^G_*(X \times U_i) \ar[r]_{(\pi^i_X)^*} & G^B_*(X \times U_i) \ar[r]_{(\pi^i_X)_*} &
G^G_*(X \times U_i).}
\end{equation}
The functorial properties of the flat pull-back map on the $G$-theory yield
a strict morphism of the inverse systems
$\left\{G_*(X^i_G)\right\} \xrightarrow{{\wt{r}}^G_{T,X}}
\left\{G_*(X^i_B)\right\}$.

For any $i \ge 1$ and $j = i+1$, Lemma~\ref{lem:ELM-Com} yields 
a Cartesian square
\begin{equation}\label{eqn:ELM-Com1}
\xymatrix@C1.6pc{
X^i_B \ar[r]^{\pi^i_X} \ar[d]_{s^{i,j}_{B, X}} &
X^i_G \ar[d]^{s^{i,j}_{G, X}} \\
X^j_B \ar[r]_{\pi^j_X} & X^j_G}
\end{equation}
which is transverse and the horizontal maps are smooth and projective.
It follows from Lemma~\ref{lem:Tor-ind-K} that push-forward maps
$(\pi^i_X)_*$ give rise to a strict morphism of inverse systems 
$\left\{G_*(X^i_B)\right\} \xrightarrow{{\wt{s}}^G_{T,X}} 
\left\{G_*(X^i_G)\right\}$. 
The assertion that ${\wt{s}}^G_{T,X} \circ {\wt{r}}^G_{T,X}$ is identity
follows from the diagram ~\eqref{eqn:Push-Pull**0} because
the composite map on the bottom row is identity by Theorem~\ref{thm:PPEquiv}.

The covariant and contravariant functoriality of ${\wt{r}}^G_{T,X}$ and
${\wt{s}}^G_{T,X}$ follow directly from Lemmas~\ref{lem:ELM-Com} and 
~\ref{lem:Tor-ind-K}.
The projection formula for ${\wt{s}}^G_{T,X}$ and ${\wt{r}}^G_{T,X}$ follows from 
the projection formula for the maps $X^i_B \xrightarrow{\pi^i_X} X^i_G$. 
\end{proof}

As an immediate consequence of Proposition~\ref{prop:Push-Pull**}, we
obtain the following analogue of Theorem~\ref{thm:PPEquiv} for
the $\sK$-theory of the motivic Borel spaces.

\begin{cor}\label{cor:Push-Pull}
For any $X \in \Sm^G_k$ and $p \ge 0$, there are restriction and induction maps
\[
{\wt{r}}^G_{T,X}: \sK_p(X_G) \to \sK_p(X_T); 
\ \ \ \ {\wt{s}}^G_{T,X}: \sK_p(X_T) \to \sK_p(X_G)
\]
which are contravariant for all maps and covariant for proper maps 
in $\Sm^G_k$. These maps satisfy the projection formula
${\wt{s}}^G_{T,X}\left(x \cdot {\wt{r}}^G_{T,X}(y)\right) = {\wt{s}}^G_{T,X}(x)
\cdot y$ for $x \in \sK_p(X_T)$ and $y \in \sK_p(X_G)$.
Moreover, the composite map ${\wt{s}}^G_{T,X} \circ {\wt{r}}^G_{T,X}$ is 
identity.
\end{cor}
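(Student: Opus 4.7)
The plan is to derive this corollary as a formal consequence of Proposition~\ref{prop:Push-Pull**} by applying the inverse limit functor. Since $\sK_p(X_G) = \varprojlim_i K_p(X^i_G)$ and $\sK_p(X_T) = \varprojlim_i K_p(X^i_T)$ by definition (for an admissible gadget $\rho$ for $G$, which after restriction along $T \hookrightarrow G$ also serves as an admissible gadget for $T$), and since Proposition~\ref{prop:Push-Pull**} already produces the maps as \emph{strict} morphisms of inverse systems, I would simply define $\wt{r}^G_{T,X}$ and $\wt{s}^G_{T,X}$ on $\sK$-theory to be $\varprojlim_i$ of the corresponding strict morphisms of inverse systems. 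The inverse limit of a strict morphism of inverse systems is well-defined since $\varprojlim$ is functorial on the category of inverse systems over a directed index set.

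For the identity $\wt{s}^G_{T,X} \circ \wt{r}^G_{T,X} = \id$, I would note that Proposition~\ref{prop:Push-Pull**} gives this identity at every finite level $i$, and since $\varprojlim$ preserves identity morphisms, the composite remains the identity after passing to the limit.

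For the functoriality assertions, any morphism $f:X \to Y$ in $\Sm^G_k$ induces compatible pullback maps $f^*: K_p(Y^i_G) \to K_p(X^i_G)$ and $f^*: K_p(Y^i_T) \to K_p(X^i_T)$ at each level, and these commute with both $\wt{r}^G_{T,\bullet}$ and $\wt{s}^G_{T,\bullet}$ at each finite level by Proposition~\ref{prop:Push-Pull**}. Taking limits gives the contravariance of the maps on $\sK$-theory. The covariance for proper maps follows similarly from the level-wise covariance. The projection formula at the limit follows immediately from the projection formula at each finite level, together with the fact that the $R(G)$-module (and ring) structure on $\sK_p(X_G)$ is induced compatibly from the $K_p(X^i_G)$-module structures through the inverse system.

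The argument is essentially formal once Proposition~\ref{prop:Push-Pull**} is in hand, and I do not foresee a serious obstacle; the only point requiring a moment of care is checking that a single admissible gadget for $G$ can simultaneously be used to compute $\sK_p(X_G)$ and $\sK_p(X_T)$ (so that the strict morphisms of Proposition~\ref{prop:Push-Pull**} really do compute both inverse limits). This holds because any admissible gadget for $G$ restricts to one for the closed subgroup $T$, and by Corollary~\ref{cor:Stable-Ind**} the inverse limit defining $\sK_p$ is independent of the chosen admissible gadget.
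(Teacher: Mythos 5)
Your argument is correct and is exactly how the paper obtains this result: Corollary~\ref{cor:Push-Pull} is stated there as an immediate consequence of Proposition~\ref{prop:Push-Pull**}, obtained by applying $\varprojlim_i$ to the strict morphisms of inverse systems, with the level-wise identity, functoriality and projection formula passing to the limit. Your added remark that one fixed admissible gadget for $G$ also serves as one for $T$ (so that the same towers compute both $\sK_p(X_G)$ and $\sK_p(X_T)$, using Corollary~\ref{cor:Stable-Ind**} for gadget-independence) is precisely the point the paper leaves implicit.
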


As another consequence of Proposition~\ref{prop:Push-Pull**}, we
obtain the following main result of this section.

\begin{thm}\label{thm:MLGeneral} 
Let $G$ be a connected reductive group over $k$ with a split maximal torus $T$.
Let $X \in \Sm^G_k$ be such that it is $T$-filtrable. 
Then for any admissible gadget 
$\rho = \left(V_i, U_i\right)_{i \ge 1}$ for $G$ and any $p \ge 0$,
the inverse system
$\left\{K_p\left(X^i_G(\rho)\right)\right\}_{i \ge 1}$
satisfies the Mittag-Leffler condition. In particular, the map 
\[
\tau^G_X :K_p(X_G) \to \sK_p(X_G) 
\]
is an isomorphism and hence $K_p(X_G)$ is naturally an $R(G)$-module.
\end{thm}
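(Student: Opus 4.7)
The plan is to leverage the retract structure from Proposition~\ref{prop:Push-Pull**} to transport the Mittag-Leffler property established for the torus in Proposition~\ref{prop:MLTorus} to the reductive group case, and then to conclude via the Milnor exact sequence~\eqref{eqn:MES}.

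First I would fix an admissible gadget $\rho = (V_i, U_i)_{i \ge 1}$ for $G$ and verify that $\rho$ is simultaneously an admissible gadget for the closed maximal torus $T \subseteq G$: conditions (1)--(3) of Definition~\ref{defn:Add-Gad} concern $G$-invariant open subsets and codimensions and so are inherited on restricting the action to $T$, while the $T$-action on $U_i$ remains free, and the quotient $U_i/T$ is quasi-projective because it is isomorphic to the mixed quotient $U_i \stackrel{G}{\times} (G/T)$, which is quasi-projective by Lemma~\ref{lem:sch}. Since $X$ is $T$-filtrable by hypothesis, Proposition~\ref{prop:MLTorus} applied to this admissible gadget for $T$ yields that the inverse system $\{K_p(X^i_T(\rho))\}_{i \ge 1}$ satisfies the Mittag-Leffler condition.

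Next I would invoke Proposition~\ref{prop:Push-Pull**}, which supplies strict morphisms of inverse systems $\{K_p(X^i_G)\} \xrightarrow{\wt{r}^G_{T,X}} \{K_p(X^i_T)\} \xrightarrow{\wt{s}^G_{T,X}} \{K_p(X^i_G)\}$ whose composite is the identity, realizing $\{K_p(X^i_G(\rho))\}$ as a levelwise retract of $\{K_p(X^i_T(\rho))\}$. The crucial (elementary) lemma I would record is that Mittag-Leffler passes to retracts: writing $A_i = K_p(X^i_G)$, $B_i = K_p(X^i_T)$, with transition maps $\phi^A_{ij}, \phi^B_{ij}$ and retract data $r_i, s_i$ satisfying $s_i \circ r_i = \id_{A_i}$ together with the intertwining relations $s_i \circ \phi^B_{ij} = \phi^A_{ij} \circ s_j$ and $r_i \circ \phi^A_{ij} = \phi^B_{ij} \circ r_j$ coming from strictness, a short computation gives $\im(\phi^A_{ij}) = s_i(\im(\phi^B_{ij}))$. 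Stabilization of the descending chain on the $B$-side as $j \to \infty$ therefore forces stabilization of the chain on the $A$-side, proving Mittag-Leffler for $\{K_p(X^i_G(\rho))\}$.

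Finally, since the preceding argument applies uniformly for every $p \ge 0$, applying it at degree $p+1$ and feeding the result into the Milnor sequence~\eqref{eqn:MES}
\[
0 \to {\underset{i}{\varprojlim}^{1}}\, K_{p+1}\!\left(X^i_G(\rho)\right) \to K_p\!\left(X_G(\rho)\right) \xrightarrow{\tau^G_X} \sK_p(X_G) \to 0
\]
forces the $\varprojlim^1$ term to vanish, giving the desired isomorphism. For the $R(G)$-module structure, the identification $K_p(X^i_G) \cong K^G_p(X \times U_i)$ from Proposition~\ref{prop:Basic-K}(2) equips each term of the inverse system with an $R(G)$-module structure whose transition maps are $R(G)$-linear, so $\sK_p(X_G)$ acquires a natural $R(G)$-module structure that transports to $K_p(X_G)$ via $\tau^G_X$. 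The main obstacle is essentially the retract bookkeeping in the middle step; no input beyond the machinery of Sections~\ref{section:EKT} through~\ref{section:KBR} is needed.
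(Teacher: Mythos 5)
Your proposal is correct and follows essentially the same route as the paper: the paper likewise transfers the Mittag-Leffler property from $\{K_p(X^i_T)\}$ (Proposition~\ref{prop:MLTorus}) to $\{K_p(X^i_G)\}$ along the strict split surjection ${\wt{s}}^G_{T,X}$ of Proposition~\ref{prop:Push-Pull**}, and then concludes via the Milnor sequence~\eqref{eqn:MES}. Your explicit check that $\rho$ remains an admissible gadget for $T$ and the retract bookkeeping $\im(\phi^A_{ij}) = s_i(\im(\phi^B_{ij}))$ merely spell out details the paper leaves implicit.
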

\begin{proof}
By \propref{prop:ML-lim-1} and ~\eqref{eqn:MES}, we only need to show the 
Mittag-Leffler condition. It follows from 
Proposition~\ref{prop:Push-Pull**} that there is a strict $R(G)$-linear
morphism $\left\{K_p(X^i_T(\rho))\right\} \xrightarrow{{\wt{s}}^G_{T,X}}
\left\{K_p(X^i_G(\rho))\right\}$ of inverse systems which is surjective.
On the other hand, the inverse system $\left\{K_p(X^i_T)\right\}$ satisfies
the Mittag-Leffler condition by Proposition~\ref{prop:MLTorus}.
Hence, this condition must hold for $\left\{K_p(X^i_G(\rho))\right\}$ as well.
\end{proof}

\section{The Atiyah-Segal completion theorem}
\label{section:Easy}
In this section, we formulate the algebraic analogue of the 
Atiyah-Segal completion theorem and prove it for filtrable schemes.
We first dispose of the baby cases of free and trivial actions. 
For a commutative ring $A$ and an $A$-module $M$, the symbol
$M[[t_1, \cdots ,t_r]]$ will denote the 
set of all formal `power series' in variables
$\{t_1, \cdots , t_r\}$ with coefficients in $M$. 
Notice that $M[[t_1, \cdots ,t_r]]$ is a module over 
the formal power series ring $A[[t_1, \cdots, t_r]]$, but it
is not necessarily same as $M {\underset{A}\otimes} \ A[[t_1, \cdots, t_r]]$.

\subsection{Case of free action}\label{subsection:Free-1}
To deal with the case of free action, we first observe the following fact.

\begin{lem}\label{lem:Nilpotent}
Let $G$ be a connected linear algebraic group over $k$. Then for any 
$X \in \Sch^G_{{free}/k}$ and $p \ge 0$, $I^m_G G^G_p(X) = 0$  for $m \gg 0$.
In particular, $G^G_p(X)$ is $I_G$-adically complete.
\end{lem}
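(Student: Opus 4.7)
The aim is to reduce the nilpotence claim to a statement about the augmentation ideal of $K_0$ on the quotient. Since $G$ acts freely on $X$ with quasi-projective quotient $Y := X/G$, descent along the $G$-torsor $X \to Y$ identifies $G$-equivariant coherent sheaves on $X$ with coherent sheaves on $Y$ and yields the natural isomorphism $G^G_p(X) \cong G_p(Y)$ recalled in \S\ref{subsubsection:Eq-KT}. Under this identification the $R(G)$-module structure translates into the restriction to $G_p(Y)$ of the natural $K_0(Y)$-module structure through the ring homomorphism
\[
\phi : R(G) \to K_0(Y), \qquad [V] \longmapsto \bigl[X \stackrel{G}{\times} V\bigr],
\]
sending a representation $V$ to the class of its associated vector bundle on $Y$. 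Because $\phi$ preserves ranks, $\phi(I_G)$ lies in the augmentation ideal $F^1 K_0(Y) := \ker\bigl(\rank : K_0(Y) \to \prod_\alpha \Z\bigr)$, with the product over the connected components of $Y$. It is therefore enough to show that some power $\bigl(F^1 K_0(Y)\bigr)^N$ annihilates $G_p(Y)$.

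For this I would proceed by noetherian induction on $Y$, which is legitimate because $Y$ is quasi-projective over $k$, hence noetherian of finite Krull dimension $d := \dim Y$. The base case $d = 0$ is immediate, since then $F^1 K_0(Y) = 0$. For the inductive step choose a dense open $U \subseteq Y$ with $U$ regular and set $Z := Y \setminus U$, so $\dim Z < d$. The $G$-theory localization sequence gives a surjection $G_p(Y) \twoheadrightarrow G_p(U)$ whose kernel is the image of $i_* : G_p(Z) \to G_p(Y)$ for $i : Z \hookrightarrow Y$. On the regular open $U$ the identification $G_*(U) = K_*(U)$ places us in the classical setting of SGA~6, where the gamma filtration on the $K_0$ of a regular noetherian scheme of finite Krull dimension is nilpotent; hence $\bigl(F^1 K_0(U)\bigr)^{N_U} = 0$ for some $N_U$. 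Since pullback to $U$ sends $F^1 K_0(Y)$ into $F^1 K_0(U)$, any $N_U$-fold product from $F^1 K_0(Y)$ acts as zero on $G_p(U)$, so such a product carries $G_p(Y)$ into $i_* G_p(Z)$. By the projection formula $\beta \cdot i_* \gamma = i_*(i^* \beta \cdot \gamma)$, the action of $F^1 K_0(Y)$ on this image factors through that of $F^1 K_0(Z)$ on $G_p(Z)$, and the inductive hypothesis supplies an $N_Z$ with $\bigl(F^1 K_0(Z)\bigr)^{N_Z} \cdot G_p(Z) = 0$. Setting $N := N_U + N_Z$ then yields $\bigl(F^1 K_0(Y)\bigr)^N \cdot G_p(Y) = 0$.

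The completeness assertion is then formal: once $I_G^m$ annihilates $G^G_p(X)$ for some $m$, the $I_G$-adic tower is eventually constant and $G^G_p(X)$ agrees with its own $I_G$-adic completion.

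The main obstacle is the inductive step above, and specifically citing a sufficiently sharp \emph{integral} nilpotence statement for the augmentation ideal of $K_0$ on a regular quasi-projective $k$-scheme of finite Krull dimension. While this is classical via the gamma filtration, making it precise with integer (rather than rational) exponents warrants some care; note however that for the conclusion $I_G^m G^G_p(X) = 0$ for $m \gg 0$ only existence of such an $N$ is needed, not any uniform bound.
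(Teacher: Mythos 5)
Your first half coincides with the paper's proof: identify $G^G_p(X)$ with $G_p(Y)$ for $Y=X/G$, observe that $I_G$ acts through the rank-zero (augmentation) ideal $\mathfrak{m}_Y\subset K_0(Y)$, and reduce to showing that a power of $\mathfrak{m}_Y$ kills $G_p(Y)$. The divergence, and the genuine gap, is in how that killing is achieved. Your d\'evissage (restrict to a dense regular open $U$, use localization plus the projection formula to push the problem to the complement, induct on dimension) is sound as a skeleton, but it hangs entirely on the asserted-but-unproved statement that $\bigl(F^1K_0(U)\bigr)^{N}=0$ for $U$ regular quasi-projective, i.e.\ an \emph{integral} vanishing of the gamma filtration of $K_0(U)$ in large degrees. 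What is straightforwardly classical is only the rational statement: $F^n_\gamma K_0(U)\otimes\Q$ agrees with the coniveau filtration and vanishes for $n>\dim U$, so integrally one gets for free only that $F^n_\gamma$ is torsion; the integral containment $F^n_\gamma\subseteq F^n_{\rm top}$ that you need is a moving-lemma--type fact which is exactly the content of the result the paper invokes. Indeed, by \cite[Theorem~4]{GS} there is a pairing $F^{m}_{\gamma}K_0(Y)\otimes G_p(Y)\to F_{d-m}G_p(Y)$, where $F_\bullet G_p(Y)$ is the filtration by dimension of support and $d=\dim Y$, valid for arbitrary (possibly singular) quasi-projective $Y$ and all $p$. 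Since $\mathfrak{m}^m_Y\subseteq F^m_\gamma K_0(Y)$, taking $m\ge d+1$ gives $\mathfrak{m}^m_Y\,G_p(Y)=0$ in one stroke, with no regularity assumption and no induction. Your special case ($p=0$, $U$ regular) is essentially an instance of that very theorem, so once it is cited the whole d\'evissage becomes superfluous; without it, the core step of your argument is missing rather than merely delicate, as your own closing caveat half-concedes.

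Two smaller points. If $Y$ is non-reduced there is no dense regular open subscheme, so your induction step needs the (easy, but unstated) reduction to $Y_{\rm red}$ via $G_p(Y)\cong G_p(Y_{\rm red})$, compatibly with the $K_0$-module structures and with the rank-zero ideals. Also, the paper uses connectedness of $G$ only to reduce to connected $X$; your componentwise rank map $K_0(Y)\to\prod_\alpha\Z$ handles this point equally well, so that part of your setup is fine.
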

\begin{proof}
Since $G$ is connected, it keeps each connected component of $X$ invariant
and hence it suffices to consider the case when $X$ is connected.
Setting $Y = X/G$ and letting $\mathfrak{m}_Y$ denote the ideal of
$K_0(Y)$ consisting of virtual bundles of rank zero, one has that
$I^m_G G^G_p(X) \subseteq \mathfrak{m}^m_Y G_p(Y)$ once we identify
$G^G_p(X)$ with $G_p(Y)$. Thus it suffices to show that 
$\mathfrak{m}^m_Y G_p(Y) = 0$ for $m \gg 0$.

If we consider the filtration of $G_p(Y)$ by the subgroups
\[
F_qG_p(Y) = {\underset{\dim(Z) \le q} \cup} \
{\rm Ker}\left(G_p(Y) \to G_p(Y \setminus Z)\right),
\]
then it follows from \cite[Theorem~4]{GS} that there is a pairing
\[
F^{m}_{\gamma}K_0(Y) \otimes G_p(Y) \to F_{d-m}G_p(Y),
\]
where $d = \dim(Y)$ and $F^{\bullet}_{\gamma}K_0(Y)$ is the gamma filtration on 
$K_0(Y)$. Since $\mathfrak{m}^m_Y \subseteq F^{m}_{\gamma}K_0(Y)$, we get the
desired vanishing for $m \ge d+1$.
\end{proof}

\begin{remk}\label{remk:Free**1}
Using the ideas of \cite[Proposition~4.3]{ASegal}, one can in fact show that
$G^G_*(X)$ is $I_G$-adically complete if and only if $G$ acts freely on $X$.
\end{remk}

Suppose now that $G$ is a connected linear algebraic group over $k$ and 
$X \in \Sm^G_{{free}/k}$. In this case, we identify $K^G_*(X)$ with $K_*(X/G)$.
It follows from Lemma~\ref{lem:Nilpotent} that
$K^G_*(X) \simeq \wh{{K^G_*(X)}_{I_G}}$.
Moreover, it follows from Proposition~\ref{prop:Basic-K}(2) that the map
$K_*(X/G) \to K_*(X_G)$ is an isomorphism.  
Hence, for any $p \ge 0$, we have 
\begin{equation}\label{eqn:Free0}
K^G_p(X) \xrightarrow{\simeq} \wh{{K^G_p(X)}_{I_G}}
\xrightarrow{\simeq} K_p(X_G).
\end{equation}

It follows from ~\eqref{eqn:MES} that the map $K_p(X_G) \to \sK_p(X_G)$
is surjective. To show that it is injective, we use 
Proposition~\ref{prop:Approx}.
It suffices to show using Proposition~\ref{prop:Rep-ind}
that for any admissible gadget $\rho = \left(V_i, U_i\right)$ for $G$,
the map $K_p(X/G) \to K_p\left(X^i_G(\rho)\right)$ is injective for
all large $i$. However, as $X \stackrel{G}{\times} V_i \to X/G$ 
is a vector bundle and $X^i_G(\rho) \subset X \stackrel{G}{\times} V_i$
is open, this injectivity follows from the definition of admissible gadgets
and Proposition~\ref{prop:Approx}.
We have thus shown that the maps
\begin{equation}\label{eqn:Free1}
K^G_p(X) \xrightarrow{\simeq} \wh{{K^G_p(X)}_{I_G}}
\xrightarrow{\simeq} K_p(X_G) \xrightarrow{\simeq} \sK_p(X_G)
\end{equation}
are all isomorphisms for all $p \ge 0$ if $X \in \Sm^G_{{free}/k}$.

\subsection{The case of trivial action}\label{subsection:TRT}
The case of trivial action of a split torus can be handled using some
results of Thomason. We show how this works. Let $T$ be a split torus
over $k$ of rank $r$ acting trivially on a smooth scheme $X$.
Given an admissible gadget $\rho = \left(V_i, U_i\right)$ for $T$, there is a
natural map of inverse systems 
$\left\{K^T_p(X)\right\} \to \left\{K_p(X^i_T)\right\}$
of $R(T)$-modules and this induces a map 
$K^T_p(X) \to \sK_p(X_T)$.

\begin{lem}\label{lem:Trivial-AS}
The maps $K^T_p(X) \to \sK_p(X_T) \xrightarrow{\iota^T_X} \wh{{\sK_p(X_T)}_{I_T}}$
induce isomorphisms of $\wh{R(T)}$-modules
\begin{equation}\label{eqn:Trivial-AS*1}
\wh{{K^T_p(X)}_{I_T}} \xrightarrow{\simeq} 
\sK_p(X_T) \xrightarrow{\simeq} \wh{{\sK_p(X_T)}_{I_T}} 
\xleftarrow{\simeq} K_p(X_T).
\end{equation}
\end{lem}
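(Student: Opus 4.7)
The strategy is to use that, when $T$ acts trivially, the Borel construction reduces to an iterated projective bundle, and to compute both sides explicitly as formal power series rings.

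First I would fix a good admissible gadget $\rho=(V_i,U_i)_{i\ge 1}$ for $T$ as in Section~\ref{subsubsection:CAG}, so that $U_i/T\cong(\P^{i-1})^r$ and the line bundle $L_{\chi_j}\stackrel{T}{\times}(L^{\oplus i}_{\chi_j}\setminus\{0\})$ on the $j$-th projective factor is $\sO(\pm 1)$. Since $T$ acts trivially on $X$, one has $X^i_T\cong X\times(\P^{i-1})^r$, and iterating the projective bundle formula in algebraic $K$-theory gives a canonical isomorphism
\[
K_p(X^i_T)\;\cong\;K_p(X)[t_1,\ldots,t_r]/(t_1^i,\ldots,t_r^i),
\]
where $t_j=1-[\sO(-1)_j]$. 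The transition maps $f^*_i$ are the obvious quotient maps and hence surjective, so the inverse system $\{K_p(X^i_T)\}$ satisfies the Mittag-Leffler condition. Combined with the Milnor exact sequence~\eqref{eqn:MES} this gives the isomorphism $K_p(X_T)\xrightarrow{\cong}\sK_p(X_T)$; since the ideals $(t_1^i,\ldots,t_r^i)$ are cofinal with the powers of the maximal ideal $(t_1,\ldots,t_r)$, one identifies $\sK_p(X_T)\cong K_p(X)[[t_1,\ldots,t_r]]$ in the notation fixed just before the lemma.

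Next I would track the $R(T)$-action. The natural map $R(T)\to\sK_0(X_T)$ sends a character $\chi_j$ to the class of the associated equivariant line bundle, which under the identifications above is a unit of the form $1\pm t_j$. Consequently the augmentation ideal $I_T=(\chi_1-1,\ldots,\chi_r-1)$ maps onto the ideal generated by unit multiples of $t_1,\ldots,t_r$, so the $I_T$-adic and $(t_1,\ldots,t_r)$-adic topologies on $\sK_p(X_T)$ coincide. Since $K_p(X)[[t_1,\ldots,t_r]]$ is tautologically complete in the latter topology, the map $\iota^T_X$ is an isomorphism.

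For the remaining isomorphism, I would invoke the Thomason identification $K^T_p(X)\cong K_p(X)\otimes_{\Z}R(T)$ valid for the trivial action. After the change of variables $t_j=1-\chi_j^{-1}$ one has $R(T)/I_T^n\cong\Z[t_1,\ldots,t_r]/(t_1,\ldots,t_r)^n$, hence $K^T_p(X)/I_T^nK^T_p(X)\cong K_p(X)\otimes_{\Z}\Z[t_1,\ldots,t_r]/(t_1,\ldots,t_r)^n$. Passing to the inverse limit identifies $\wh{K^T_p(X)}_{I_T}$ with $K_p(X)[[t_1,\ldots,t_r]]=\sK_p(X_T)$, and this identification is realized precisely by the comparison map induced from $K^T_p(X)\to\sK_p(X_T)$. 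The substantive content is the projective bundle formula together with the trivial observation that the transition maps are surjective; the rest is bookkeeping. The one point that requires care is verifying that the $R(T)$-action on $\sK_p(X_T)$ coming from the ind-structure agrees with the one on $K^T_p(X)$ under the comparison map, which follows from naturality of the projective bundle formula in the base.
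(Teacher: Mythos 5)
Your argument is correct and follows essentially the same route as the paper's proof: Thomason's identification $K^T_*(X)\cong K_*(X)\otimes_{\Z}R(T)$ for the trivial action, the projective bundle formula giving $K_p(X^i_T)\cong K_p(X)\otimes_{\Z}K_0((\P^{i-1})^r)$ with surjective transition maps (hence Mittag-Leffler plus the Milnor sequence for $K_p(X_T)\xrightarrow{\cong}\sK_p(X_T)$), and cofinality of the ideals $(t_1^i,\ldots,t_r^i)$ (the paper's $J^i_T$) with the powers of $I_T$. The only real difference is cosmetic: the paper pins down the kernel of $K^T_p(X)\to K_p(X^i_T)$ exactly as $J^i_T K^T_p(X)$ via the equivariant localization sequence for $U_i\subset V_i$ and the self-intersection formula of Vezzosi--Vistoli, whereas you bypass the kernel computation by working in the power-series model and observing that each character maps to a unit multiple of $1-t_j$, which amounts to the same identification after completion.
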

\begin{proof}
Since $T$ acts trivially on $X$, it follows from
\cite[Lemma~5.6]{Thomason2} that the functor 
${\rm Vec}(X) \times {\rm Rep}(T) \to {\rm Vec}^T(X)$
induces an isomorphism 
\begin{equation}\label{eqn:K-trivial}
K_*(X) {\underset{\Z}\otimes} \ R(T) \xrightarrow{\simeq} K^T_*(X).
\end{equation}

To show the first two isomorphisms of ~\eqref{eqn:Trivial-AS*1},
let $\rho = \left(V_i, U_i\right)$ be a good admissible gadget for $T$
and set $X^i_T = X^i_T(\rho)$.
For $i \ge 1$, let $J^i_T$ denote the ideal 
$\left(\rho^i_1, \cdots , \rho^i_r\right)$ of $R(T) = \Z[t_1, \cdots , t_r,
(t_1\cdots t_r)^{-1}]$, where $\rho_j = 1- t_j$.
Notice that $J^1_T = I_T$ and $J^i_T \subseteq I^i_T$ for each $i \ge 1$.
We claim for each $i \ge 1$ that there is a short exact sequence
\begin{equation}\label{eqn:Borel-complete-T2}
0 \to J^i_T\left(K^T_*(X)\right) \to K^T_*(X) \to 
K_*(X^i_T) \to 0.
\end{equation} 

Using ~\eqref{eqn:K-trivial} and the isomorphism $K_*(X^i_T) \simeq
K_*(X) {\underset{\Z}\otimes}  K_0((\P^{i-1})^r)$ 
(by the projective bundle formula), it is 
enough to show the exactness of the sequence
\begin{equation}\label{eqn:Borel-complete-T3}
0 \to J^i_T R(T) \to R(T) \to K_0((\P^{i-1})^r) \to 0.
\end{equation}
Since $R(T) = R(\G_m) \otimes \cdots \otimes R(\G_m)$,
$K_0\left((\P^{i-1})^r\right) = K_0(\P^{i-1}) \otimes \cdots 
\otimes K_0(\P^{i-1})$,
and $J^i_T = \stackrel{r}{\underset{j =1}\sum} (\rho^i_j)$, we can further
assume that $T$ has rank one.

In this case, we have the localization sequence
\begin{equation}\label{eqn:Borel-complete-T3*}
K^T_0(k) \to K^T_0(V_i) \to K_0(\P^{i-1}) \to 0
\end{equation}
and one knows by the self intersection formula ({\sl cf.}
\cite[Theorem~2.1]{VV}) that under the 
isomorphism $R(T) \xrightarrow{\simeq} K^T_0(V_i)$, the first map in 
~\eqref{eqn:Borel-complete-T3*} is multiplication by the top 
$T$-equivariant Chern class of vector bundle $V_i$.  
Since $V_i = V^i$ and since the first Chern class of $V$ in 
$R(T) = \Z[t, t^{-1}]$ is $1-t$, the Whitney sum formula shows that the 
exact sequence ~\eqref{eqn:Borel-complete-T3*} is same as 
~\eqref{eqn:Borel-complete-T3}. This proves the claim. 

The lemma follows immediately from the claim. The $R(T)$-module structure
on each $\sK_p(X_T)$ is given by taking the inverse limit of the $R(T)$-modules
$K_p(X^i_T)$.
It follows from ~\eqref{eqn:K-trivial} and ~\eqref{eqn:Borel-complete-T2}
that there is a strict isomorphism of pro-$R(T)$-modules
\begin{equation}\label{eqn:Borel-complete-T4}
``{{\underset{i}\varprojlim}}" \ \frac{K^T_p(X)}{J^i_T\left(K^T_p(X)\right)}
\xrightarrow{\simeq} 
``{{\underset{i}\varprojlim}}" \ K_p(X^i_T).
\end{equation} 

Since $J^i_T \subseteq I^i_T$ for $i \ge 1$ and since for any $i \ge 1$,
one has $I^j_T \subseteq J^i_T$ for $j \gg i$, we see that the
map of pro-$R(T)$-modules 
$``{{\underset{i}\varprojlim}}" \ \frac{K^T_p(X)}{J^i_T\left(K^T_p(X)\right)}
\to 
``{{\underset{i}\varprojlim}}" \ \frac{K^T_p(X)}{I^i_T\left(K^T_p(X)\right)}$
is an isomorphism.
We conclude that $\wh{{K^T_p(X)}_{I_T}} \xrightarrow{\simeq} \sK_p(X_T)$.
In particular, $\sK_p(X_T)$ is $I_T$-adically complete for each $p \ge 0$.

The isomorphism $K_*(X^i_T) \simeq K_*(X) {\underset{\Z}\otimes} \ 
K_0((\P^{i-1})^r)$
also shows that each map $K_p(X^{i+1}_T) \to K_p(X^i_T)$ is 
surjective.
We conclude from ~\eqref{eqn:MES} that the map
$K_p(X_T) \to \sK_p(X_T)$ is an isomorphism.

In fact, what the above shows is that $X_T \simeq X \times (\P^{\infty}_k)^r$ 
and the map
\begin{equation}\label{eqn:Borel-complete-T1}
\gamma_X: K_p(X)[[t_1, \cdots, t_r]] \to K_p(X_T); \ \ t_j \mapsto 1- \xi_j
\end{equation}
is an isomorphism of $\wh{R(T)}$-modules, where
$\xi_j \in K_0(X_T)$ is the class of the line
bundle $p^*_j\left(\sO(-1)\right)$ under the $j$th projection 
$p_j: X_T \to \P^{\infty}_k$.
\end{proof}

\begin{remk}\label{remk:K-N}
The case $X = \Spec(k)$ of Lemma~\ref{lem:Trivial-AS} was also proven 
independently by Knizel and Neshitov \cite{KN}.
\end{remk}

\vskip .3cm

\subsection{The general case of  the completion theorem}
\label{section:ASM}
Let $G$ be a linear algebraic group over $k$ and let
$X \in \Sm^G_k$. We have seen in \S~\ref{subsubsection:Kth-Borel} that 
$\sK_p(X_G)$ is an $R(G)$-module and
so, there is a natural map $\iota^G_X: \sK_p(X_G) \to \wh{{\sK_p(X_G)}_{I_G}}$,
where the latter is the $I_G$-adic completion of $\sK_p(X_G)$.
Notice that $\iota^G_X$ is contravariant functorial in $X$ and $G$.
We begin with the following.

\begin{prop}\label{prop:Borel-complete}
Let $G$ be a connected reductive group over $k$ with a split maximal torus $T$
and let $X \in \Sm^G_k$ be $T$-filtrable.
Then for any $p \ge 0$, the map
\[
\iota^G_X: \sK_p(X_G) \to \wh{{\sK_p(X_G)}_{I_G}}
\]
is an isomorphism.
\end{prop}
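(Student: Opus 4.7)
The strategy is to transfer the completeness question from $G$ to the split maximal torus $T$ using the push-pull splitting, and then to analyze $\sK_p(X_T)$ directly via the $T$-filtration of $X$.

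First, by \corref{cor:Push-Pull}, the restriction map $\wt{r}^G_{T,X}\colon \sK_p(X_G) \to \sK_p(X_T)$ admits an $R(G)$-linear retraction $\wt{s}^G_{T,X}$. Consequently, $\sK_p(X_G)$ is a direct summand of $\sK_p(X_T)$ as an $R(G)$-module, where $R(G)$ acts on $\sK_p(X_T)$ through the ring homomorphism $R(G) \to R(T)$. Since $I_G$-adic completion commutes with finite direct sums, and since a direct summand of an $I_G$-adically complete module is again $I_G$-adically complete, the proposition will follow once we establish that $\sK_p(X_T)$ is $I_G$-adically complete.

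Next, because $X$ is $T$-filtrable and any good admissible gadget for $T$ has each $U_i/T \cong (\P^{i-1})^r$ projective, \corref{cor:filter-Equiv-T} yields a canonical $R(T)$-linear isomorphism
\[
\sK_p(X_T) \;\cong\; \bigoplus_{m=0}^n \sK_p\bigl((Z_m)_T\bigr),
\]
with $Z_0,\dots,Z_n$ the connected components of the fixed locus $X^T$, on each of which $T$ acts trivially. By \lemref{lem:Trivial-AS}, every summand $\sK_p((Z_m)_T) \cong K_p(Z_m)[[t_1,\dots,t_r]]$ is $I_T$-adically complete as an $R(T)$-module. Since the decomposition is finite, $\sK_p(X_T)$ is itself $I_T$-adically complete.

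It remains to upgrade $I_T$-adic completeness to $I_G$-adic completeness, and this is where the main obstacle lies. The key input is the classical Atiyah--Segal observation: for a connected split reductive $G$ with split maximal torus $T$, the ring $R(T)$ is a finite $R(G)$-module, so there exists $N \ge 1$ with $I_T^N \subseteq I_G \cdot R(T)$. Together with the tautological containment $I_G \cdot R(T) \subseteq I_T$, this forces the $I_G$-adic and $I_T$-adic topologies to coincide on every $R(T)$-module, regardless of finite generation, by multiplying both sides by powers. Applying this to $\sK_p(X_T)$ shows that it is $I_G$-adically complete, and hence so is its $R(G)$-direct summand $\sK_p(X_G)$, which is exactly the assertion that $\iota^G_X$ is an isomorphism. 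The main work is in verifying the last step cleanly at the level of pro-systems so that we do not implicitly need finite generation of $\sK_p(X_T)$ as an $R(T)$-module (which would fail since it contains power-series pieces).
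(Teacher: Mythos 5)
Your argument is correct and follows essentially the same route as the paper: the push-pull retraction of \corref{cor:Push-Pull} reduces completeness of $\sK_p(X_G)$ to that of $\sK_p(X_T)$, the coincidence of the $I_G$- and $I_T$-adic topologies on $R(T)$-modules handles the change of ideal, and \corref{cor:filter-Equiv-T} together with \lemref{lem:Trivial-AS} settles the torus case. The only caveat is that the containment $I_T^N \subseteq I_G\cdot R(T)$ is not a formal consequence of finiteness of $R(T)$ over $R(G)$ alone (one also needs that $I_T$ is the unique prime over $I_G$, via $R(G)\cong R(T)^W$); this is precisely the result the paper quotes, namely \cite[Corollary~6.1]{EG1}, so it is cleaner to cite it than to rederive it.
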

\begin{proof}
It follows from Corollary~\ref{cor:Push-Pull} that the composite horizontal
maps on both rows of the commutative diagram
\begin{equation}\label{eqn:Borel-complete0}
\xymatrix@C2.8pc{
\sK_p(X_G) \ar[r]^{{\wt{r}}^G_X} \ar[d]_{\iota^G_X} &
\sK_p(X_T) \ar[r]^{{\wt{s}}^G_X} \ar[d]^{\iota^T_X} & 
\sK_p(X_G) \ar[d]^{\iota^G_X} \\
\wh{{\sK_p(X_G)}_{I_G}} \ar[r]_{{\wh{r}}^G_X} & 
\wh{{\sK_p(X_T)}_{I_G}} \ar[r]_{{\wh{s}}^G_X} &
\wh{{\sK_p(X_G)}_{I_G}}}
\end{equation}
are identity. Hence, it suffices to show that the map 
$\iota^T_X$ is an isomorphism.

It follows from \cite[Corollary~6.1]{EG1} that the $I_G$-adic and the
$I_T$-adic topologies on $R(T)$ coincide. Hence for any $R(T)$-module $M$,
its $I_G$-adic and $I_T$-adic topologies coincide. Applying this to
$\sK_p(X_T)$, we see that the map $\wh{{\sK_p(X_T)}_{I_G}} \to
\wh{{\sK_p(X_T)}_{I_T}}$ is an isomorphism. 
Thus we have reduced the problem to the case of a split torus.

Let $r$ denote the rank of $T$ and let $\rho = \left(V_i, U_i\right)$ be a 
good admissible gadget for $T$ so that 
${{U_i}/T} \simeq (\P^{i-1}_k)^r$.
Let $X \in \Sm^T_k$ be a $T$-filtrable scheme with the filtration given
by ~\eqref{eqn:filtration-BB}. By Corollary~\ref{cor:filter-Equiv-T},
it suffices to prove the result when $T$ acts trivially on $X$.
But this case follows from Lemma~\ref{lem:Trivial-AS}.
\end{proof}

\subsubsection{The Atiyah-Segal map}\label{subsection:ASmap*}
Let $G$ be a linear algebraic group over $k$ and let
$\rho = \left(V_i, U_i\right)$ be an admissible gadget for $G$.
The projection map $X \times U_i 
\xrightarrow{p_i} X$ is $G$-equivariant and hence induces the pull-back
$R(G)$-linear map 
$p^*_i: K^G_*(X) \to K^G_*\left(X \times U_i\right) = K_*(X^i_G(\rho))$. 
This map is clearly compatible with the maps $f^*_{X,i}: K_*(X^{i+1}_G(\rho)) \to
K_*(X^i_G(\rho))$. This gives us a map of pro-$R(G)$-modules
$K^G_*(X) \to ``{{\underset{i}\varprojlim}}" \ K_*(X^i_G(\rho))$.
Taking the limits, we conclude that there is a natural map
\begin{equation}\label{eqn:ASmap}
\beta^G_X :  K^G_p(X) \to \sK_p(X_G)
\end{equation}
for every $p \ge 0$. This map is contravariant functorial in $X$ and $G$
and is $R(G)$-linear. In fact, $\sK_*(X_G)$ has a natural structure of
$K^G_*(X)$-module and $\beta^G_X$ is then $K^G_*(X)$-linear. 
This map will be called the {\sl Atiyah-Segal} map in tribute to
Atiyah and Segal, who studied this map in \cite{ASegal} in the 
topological context. Since $\beta^G_X$ is a morphism of $R(G)$-modules, it 
induces a natural morphism of $I_G$-adic completions:
\begin{equation}\label{eqn:ASCom}
\wh{\beta^G_X} : \wh{{K^G_p(X)}_{I_G}} \to \wh{{\sK_p(X_G)}_{I_G}}. 
\end{equation}

Suppose now that $G$ is a connected reductive group over $k$ with a split 
maximal torus $T$ and suppose that $X \in \Sm^G_k$ is $T$-filtrable.
It follows then from Proposition~\ref{prop:Borel-complete} that the map
$\wh{\beta^G_X}$ actually lifts canonically to a map of $\wh{R(G)}$-modules
$\wh{{K^G_p(X)}_{I_G}} \to \sK_p(X_G)$. We wish to prove the following.

\begin{prop}\label{prop:Alg-AS}
Let $G$ be a connected reductive group over $k$ with a split maximal
torus $T$. Let $X \in \Sm^G_k$ be $T$-filtrable.
Then for every $p \ge 0$, the morphism 
\begin{equation}\label{eqn:Alg-AS0}
\wh{\beta^G_X} : \wh{{K^G_p(X)}_{I_G}} \to \sK_p(X_G)
\end{equation}
of $\wh{R(G)}$-modules is an isomorphism.
\end{prop}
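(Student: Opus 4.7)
The plan is to deduce this by a three-step reduction: a push-pull retract argument reduces $G$ to its split maximal torus $T$, the Bialynicki-Birula decomposition then reduces the $T$-filtrable $X$ to its smooth projective components $Z_m$ with trivial $T$-action, and finally \lemref{lem:Trivial-AS} handles those components.

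For the first step, I would combine \thmref{thm:PPEquiv} and \corref{cor:Push-Pull} with the naturality of $\beta^G_X$ in the group to produce a commutative diagram
\begin{equation*}
\xymatrix@C1.8pc{
\wh{{K^G_p(X)}_{I_G}} \ar[r] \ar[d]_{\wh{\beta^G_X}} &
\wh{{K^T_p(X)}_{I_G}} \ar[r] \ar[d]^{\wh{\beta^T_X}} &
\wh{{K^G_p(X)}_{I_G}} \ar[d]^{\wh{\beta^G_X}} \\
\sK_p(X_G) \ar[r] & \sK_p(X_T) \ar[r] & \sK_p(X_G)}
\end{equation*}
patterned on the diagram in the proof of \propref{prop:Borel-complete}, in which both horizontal composites are the identity. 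A standard retract argument (chasing kernels and images through the identities $s \circ r = \id$) then shows that if $\wh{\beta^T_X}$ is an isomorphism, so is $\wh{\beta^G_X}$. This reduces the problem to the case where $G = T$ is a split torus acting on a $T$-filtrable $X$.

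For the second step, by \cite[Corollary~6.1]{EG1} the $I_G$-adic and $I_T$-adic topologies coincide on every $R(T)$-module, so $\wh{{K^T_p(X)}_{I_G}} = \wh{{K^T_p(X)}_{I_T}}$. Now for a $T$-filtrable $X$, \thmref{thm:filter-Gen} yields the canonical decomposition $K^T_p(X) \cong \bigoplus_{m=0}^n K^T_p(Z_m)$, while \corref{cor:filter-Equiv-T} yields the parallel $\sK_p(X_T) \cong \bigoplus_{m=0}^n \sK_p((Z_m)_T)$, with each $Z_m$ smooth and projective carrying the trivial $T$-action. The crucial verification is that $\beta^T_X$ respects both decompositions: both splittings are constructed via push-forward along $\ov{p}_m: Y_m \to X_m$ from the closure of the graph of $\phi_m: W_m \to Z_m$, a construction manifestly natural with respect to the pull-back maps $p_i^*: K^T_*(X) \to K_*(X^i_T)$ defining $\beta^T_X$, thanks to the transverse base-change squares underlying the proofs of \thmref{thm:filter-Gen} and \thmref{thm:filter-Equiv}. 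Since $I_T$-adic completion commutes with finite direct sums, the problem reduces to proving the proposition for each $Z_m$.

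For the final step, on a smooth scheme $Z_m$ with trivial $T$-action, the map $\wh{\beta^T_{Z_m}} : \wh{{K^T_p(Z_m)}_{I_T}} \to \sK_p((Z_m)_T)$ is precisely the first isomorphism furnished by \lemref{lem:Trivial-AS}, completing the proof. The main obstacle I expect is the compatibility of $\beta^T_X$ with the two Bialynicki-Birula splittings: the equivariant splitting of \thmref{thm:filter-Gen} lives at the level of $X$ itself, while the $\sK$-theory splitting is assembled from the finite stages $X^i_T$, so the compatibility must be checked by tracking the splitting through each $p_i^*$ and verifying stability under passage to the inverse limit---essentially the content of \lemref{lem:filter-commute} and its consequences, applied with the additional pullback from $X$ to $X^i_T$ in play.
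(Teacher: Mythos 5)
Your proposal follows essentially the same route as the paper's own proof: the push-pull retract (Theorem~\ref{thm:PPEquiv} and Corollary~\ref{cor:Push-Pull}) reduces $G$ to $T$, \cite[Corollary~6.1]{EG1} identifies the $I_G$- and $I_T$-adic completions, the filtration decompositions of Theorems~\ref{thm:filter-Gen} and ~\ref{thm:filter-Equiv} reduce to the trivial torus action, and Lemma~\ref{lem:Trivial-AS} finishes. The only difference is that you spell out the compatibility of $\beta^T_X$ with the two splittings (via the graph-closure push-forwards and Lemma~\ref{lem:filter-commute}), a point the paper leaves implicit, and your treatment of it is correct.
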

\begin{proof}
It follows from Theorem~\ref{thm:PPEquiv} and Corollary~\ref{cor:Push-Pull} 
that the composite horizontal maps on both rows of the commutative diagram 

\begin{equation}\label{eqn:Borel-complete0-ex}
\xymatrix@C2.8pc{
\wh{{K^G_p(X)}_{I_G}} \ar[r]^{{\wh{r}}^G_X} \ar[d]_{\wh{\beta^G_X}} & 
\wh{{K^T_p(X)}_{I_G}} \ar[r]^{{\wh{s}}^G_X} \ar[d]_{\wh{\beta^T_X}} &
\wh{{K_p(X_G)}_{I_G}} \ar[d]^{\wh{\beta^G_X}} \\
\sK_p(X_G) \ar[r]_{{\wt{r}}^G_X} &
\sK_p(X_T) \ar[r]_{{\wt{s}}^G_X} & 
\sK_p(X_G)}
\end{equation}
of $\wh{R(G)}$-modules are identity. Hence, it 
suffices to show that the map $\wh{\beta^T_X}$ is an isomorphism.
The arguments in the proof of Proposition~\ref{prop:Borel-complete} show
that the map $\wh{{K^T_p(X)}_{I_G}} \to \wh{{K^T_p(X)}_{I_T}}$ is an isomorphism.
This reduces the problem to the case of a split torus.
Using Theorems~\ref{thm:filter-Gen} and ~\ref{thm:filter-Equiv}, we further
reduce to the case of trivial action of a split torus. But this case
follows from Lemma~\ref{lem:Trivial-AS}.
\end{proof}

Combining Theorem~\ref{thm:MLGeneral} and 
Proposition~\ref{prop:Alg-AS}, we get the final algebraic 
Atiyah-Segal completion theorem as follows.

\begin{thm}\label{thm:MAIN}
Let $G$ be a connected and reductive group over $k$ with a split maximal
torus $T$. Let $X \in \Sm^G_k$ be $T$-filtrable.
Then for every $p \ge 0$, the morphisms 
\begin{equation}\label{eqn:Alg-AS0*}
\wh{{K^G_p(X)}_{I_G}} \xrightarrow{\wh{\beta^G_X}} 
\sK_p(X_G) \xleftarrow{\tau^G_X} K_p(X_G) 
\end{equation}
of $\wh{R(G)}$-modules are isomorphisms.
\end{thm}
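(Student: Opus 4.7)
The plan is to recognize that the theorem is a packaged consequence of two results already proved: Theorem~\ref{thm:MLGeneral}, which establishes that $\tau^G_X$ is an isomorphism, and Proposition~\ref{prop:Alg-AS}, which establishes that $\wh{\beta^G_X}$ is an isomorphism. So I would simply invoke both. To make the sketch substantive, let me indicate the structure of each invocation.

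For the arrow $\tau^G_X : K_p(X_G) \to \sK_p(X_G)$, the key is the Milnor exact sequence~\eqref{eqn:MES}: surjectivity is automatic, and injectivity reduces to showing that the $\varprojlim^{1}$ term of the tower $\{K_{p+1}(X^i_G(\rho))\}$ vanishes. Theorem~\ref{thm:MLGeneral} supplies this by proving the stronger Mittag-Leffler property, itself obtained by using the push-pull maps of Proposition~\ref{prop:Push-Pull**} to write the tower as a strict $R(G)$-linear retract of $\{K_p(X^i_T(\rho))\}$ (via $\wt{s}^G_{T,X} \circ \wt{r}^G_{T,X} = \id$) and then invoking Proposition~\ref{prop:MLTorus} for the split-torus tower, whose surjectivity flows from the $T$-filtrable decomposition~\eqref{eqn:ft*} together with the projective bundle formula applied to $Z^i_m \cong Z_m \times (\P^{i-1})^r$.

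For the arrow $\wh{\beta^G_X} : \wh{K^G_p(X)_{I_G}} \to \sK_p(X_G)$, I would quote Proposition~\ref{prop:Alg-AS} verbatim. Its proof uses the commutative ``retract'' diagram~\eqref{eqn:Borel-complete0} between the $I_G$-completed equivariant $K$-groups and the $\sK$-theory, which, combined with the Edidin-Graham comparison of $I_G$-adic and $I_T$-adic topologies on $R(T)$-modules, reduces the claim to the split torus. There Theorem~\ref{thm:filter-Equiv} (or its smooth form Corollary~\ref{cor:filter-Equiv-T}) together with the analogous decomposition for equivariant $K$-theory (Theorem~\ref{thm:filter-Gen}) reduces further to the case of trivial $T$-action, and that case is Lemma~\ref{lem:Trivial-AS}, where the explicit isomorphism $\gamma_X$ of~\eqref{eqn:Borel-complete-T1} makes everything transparent.

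Consequently, the proof of Theorem~\ref{thm:MAIN} itself is a two-line assembly, and I would present it as such. The real work, and hence the main obstacle, lies in the two inputs: the Mittag-Leffler property for connected reductive $G$ (where the push-pull reduction to the torus is the crucial idea) and the completion statement for trivial torus action (where one must carefully compare the ideals $J^i_T$ and $I^i_T$ to identify the pro-system of equivariant $K$-groups modulo $J^i_T$ with the tower $\{K_p(X^i_T)\}$). Both have already been carried out, so at this stage only the bookkeeping remains.
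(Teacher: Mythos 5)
Your proposal is correct and coincides with the paper's own proof: the paper deduces Theorem~\ref{thm:MAIN} immediately by combining Theorem~\ref{thm:MLGeneral} (isomorphism of $\tau^G_X$ via the Mittag-Leffler property, obtained by the push-pull retraction onto the split-torus tower) with Proposition~\ref{prop:Alg-AS} (isomorphism of $\wh{\beta^G_X}$, reduced via the same retraction and the Edidin--Graham comparison of topologies to the trivial torus action case of Lemma~\ref{lem:Trivial-AS}). Your summaries of the two inputs also accurately reflect how they are proved in the paper.
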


Recall that a connected linear algebraic group $G$ over $k$
is called {\sl split}, if it contains a split maximal torus.
Since every smooth projective $T$-scheme is $T$-filtrable by 
Theorem~\ref{thm:BBH}, we get the following.

\begin{cor}\label{cor:MAIN-Proj}
Let $G$ be a connected and split reductive group over $k$
and let $X \in \Sm^G_k$ be projective.
Then for $p \ge 0$, there are isomorphisms of $\wh{R(G)}$-modules
\begin{equation}\label{eqn:Alg-AS0*1}
\wh{{K^G_p(X)}_{I_G}} \stackrel{\wh{\beta^G_X}}{\underset{\simeq}\to} 
\sK_p(X_G) \stackrel{\tau^G_X} {\underset{\simeq}\leftarrow} 
K_p(X_G).
\end{equation}
\end{cor}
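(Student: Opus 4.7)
The plan is to deduce this corollary directly from Theorem~\ref{thm:MAIN} by verifying that its hypothesis, namely $T$-filtrability, is automatic in the smooth projective setting. Concretely, I would proceed as follows.

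First, since $G$ is a connected split reductive group, it contains a split maximal torus $T$ by definition. Given $X \in \Sm^G_k$, restriction of the action along $T \hookrightarrow G$ makes $X$ into a smooth $T$-scheme, and since $X$ is projective over $k$, it is in particular smooth and projective as a $T$-scheme. I would then invoke Theorem~\ref{thm:BBH} (Bialynicki--Birula--Hesselink) to conclude that $X$ is $T$-filtrable in the sense of \S\ref{subsection:Filt}: the fixed locus $X^T$ is smooth and projective, and $X$ admits a $T$-stable filtration by closed subschemes whose successive strata are total spaces of $T$-equivariant vector bundles over the components of $X^T$. Crucially, this $T$-filtrability is an intrinsic property of the $T$-action on $X$ and makes no reference to the ambient $G$-action, so it is precisely the hypothesis required by Theorem~\ref{thm:MAIN}.

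Having verified the hypothesis, the conclusion is immediate: Theorem~\ref{thm:MAIN} asserts exactly that for $X \in \Sm^G_k$ which is $T$-filtrable (with $G$ connected reductive and $T$ a split maximal torus), both the Atiyah--Segal completion map $\wh{\beta^G_X}: \wh{K^G_p(X)_{I_G}} \to \sK_p(X_G)$ and the natural comparison map $\tau^G_X: K_p(X_G) \to \sK_p(X_G)$ are isomorphisms of $\wh{R(G)}$-modules, for every $p \ge 0$. Thus the two displayed maps in the corollary are isomorphisms, as claimed.

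There is no substantive obstacle to overcome here: all the work has already been done in Theorem~\ref{thm:MAIN} (which in turn rests on Proposition~\ref{prop:Alg-AS} and Theorem~\ref{thm:MLGeneral}), and the only new input needed is Theorem~\ref{thm:BBH} to supply the $T$-filtration. If one were to worry about anything, it would be making sure that the notion of ``split'' for $G$ used in the corollary is indeed the one granting a split maximal torus, but this is built into the definition recalled in the paragraph just before the statement. Hence the proof is a one-line reduction: $X$ smooth projective with $T$-action $\Rightarrow$ $T$-filtrable by Theorem~\ref{thm:BBH} $\Rightarrow$ Theorem~\ref{thm:MAIN} applies.
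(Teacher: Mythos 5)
Your proposal is correct and coincides with the paper's own argument: the corollary is deduced from Theorem~\ref{thm:MAIN} by noting that $G$, being split, contains a split maximal torus $T$, and that every smooth projective $T$-scheme is $T$-filtrable by Theorem~\ref{thm:BBH}. Nothing further is needed.
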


\vskip .2cm

\begin{remk}\label{remk:Non-reductive}
One can use Proposition~\ref{prop:Unip} and Theorem~\ref{thm:Morita}
to conclude that Theorem~\ref{thm:MAIN} and Corollary~\ref{cor:MAIN-Proj} 
are true for the action of any connected
and split (not necessarily reductive) linear algebraic group in
characteristic zero.
\end{remk}

\subsection{Equivariant Quillen-Lichtenbaum conjecture}
Let $X$ be a smooth scheme over $\C$ and let $K(X, {\Z}/n)$ denote 
the mod-$n$ algebraic $K$-theory spectrum of $X$ where
$n \ge 1$. Let $K^{\rm top}_*(X, {\Z}/n)$ denote the mod-$n$
topological $K$-theory of the analytic space $X(\C)$. The
celebrated Quillen-Lichtenbaum conjecture says that
the topological realization map
\begin{equation}\label{eqn:QC}
\tau_X: K_p(X, {\Z}/n) \to K^{\rm top}_{-p}(X, {\Z}/n)
\end{equation}
is an isomorphism for $p \ge \dim(X) -1$ and a monomorphism for
$p = \dim(X) - 2$.
This conjecture is now known to be true as a consequence of the
proof of the Bloch-Kato conjecture by Rost and Voevodsky  \cite{Voevodsky}.
Combining this with our results in this section,
we get the following the solution to the equivariant version the
Quillen-Lichtenbaum conjecture.

\begin{thm}\label{thm:EQLC}
Let $G$ be a connected (not necessarily reductive) linear algebraic group over 
$\C$ containing a maximal torus $T$. 
Let $M$ be a maximal compact subgroup of the complex Lie group $G(\C)$.
Let $X$ be a smooth projective scheme with a $G$-action. Then the 
topological realization map
\begin{equation}\label{eqn:EQLC-0}
\tau^G_X: K^G_p(X, {\Z}/n) \to K^{M, \rm top}_{-p}(X, {\Z}/n)
\end{equation}
is an isomorphism for $p \ge \dim(X^T) -1$ and a monomorphism for
$p = \dim(X^T) - 2$.
\end{thm}
\begin{proof}
As remarked above, we can assume that $G$ is reductive. Using 
Theorem~\ref{thm:PPEquiv} and the analogous theorem for the
equivariant topological theory for $M$-action in \cite[Proposition~3.8]{Segal},
we can reduce to the case of torus. Next, we observe that the proof of 
Theorem~\ref{thm:filter-Gen} is motivic and works
for the mod-$n$ algebraic and topological $K$-theory, mutatis mutandis.
Using this theorem, we reduce to the case of trivial action.
The result now follows from ~\eqref{eqn:K-trivial}, 
\cite[Proposition~2.2]{Segal} and the solution of the non-equivariant 
Quillen-Lichtenbaum conjecture.
\end{proof}

\section{Completion theorem for non-projective schemes}
\label{section:NON-P}
In this section, we show that the algebraic Atiyah-Segal completion theorem
holds for $G^G_0(-)$ for all $G$-schemes and for all linear 
algebraic groups $G$. 
In order to do so, we need an analogue of Theorem~\ref{thm:Stable-Ind**}
for singular schemes. This is achieved using the following general result
of independent interest. 
The proof is a straightforward translation of the
proof of a similar result for algebraic cobordism in
\cite[Proposition~15]{HL}. We only give a brief sketch.

\begin{prop}\label{prop:Approx}
Let $X$ be a $k$-scheme and let $p : E \to X$ be a vector 
bundle of rank $r \ge 1$. Then there exists a positive integer $n$ such
that the following hold.
\begin{enumerate}
\item
The integer $n$ depends only on $X$.
\item
For any closed subscheme $Y \subsetneq E$ of codimension larger than $n$, the 
restriction map $G_i(E) \to G_i(E \setminus Y)$ is injective for all $i \ge 0$.
\end{enumerate}
\end{prop}
\begin{proof}

Set $U = E \setminus Y$ and let $j:U \to Y$ be the inclusion map.
We consider the commutative diagram
\begin{equation}\label{eqn:Approx0}
\xymatrix@C.5pc{
G_i(X) \ar[d]_{p^*} \ar[dr] & \\
G_i(E) \ar[r]_{j^*} & G_i(U)}
\end{equation}
of $G_i$-groups.
Since $p^*$ is an isomorphism by the homotopy invariance, the proposition is
equivalent to showing that the composite map $j^* \circ p^*$ is injective. 
To show this, the only new case we need to consider is when $k$ is finite and
$E$ is the trivial bundle. The rest of the proof can be easily deduced from
\cite[Proposition~15]{HL}.  

If $k$ is finite and $E$ is the trivial bundle, we can find an infinite field 
extension $k \subsetneq l$ 
which is obtained as a tower of finite extensions of $k$ of relatively prime 
degrees. Since $G$-theory commutes with direct limits, one can
easily show that if there is an element $a \in G_i(E)$ which dies in 
$G_i(U)$, then we can find finite extensions $l_1, l_2 \subsetneq l$ of 
$k$ of relatively prime degrees such that $a \in G_i(E)$ dies \nolinebreak
in $G_i(E_{l_1})$ and in $G_i(E_{l_2})$. 
The projection formula implies that
$a$ must die in $G_i(E)$. 
\end{proof}

\begin{remk}\label{remk:Approx*}
Under the hypothesis of Proposition~\ref{prop:Approx}, we can not claim that
the map $G_i(E) \to G_i(E \setminus Y)$ is an isomorphism (unless $i = 0$)
even if the codimension of $Y$ is arbitrarily large.
To see this,  just take 
$X$ to be $\Spec(k)$ and $Y$ to be the origin of an affine space $\A^r$.
Then for any $i \ge 0$, we get a short exact sequence
\[
0 \to K_i(\A^r) \to K_i(\A^r \setminus \{0\}) \to K_{i-1}(k) \to 0
\]
and we know that $K_i(k)$ is not zero in general for any $i > 0$.
\end{remk}

\begin{cor}\label{cor:Approx-Sing}
Given $X \in \Sch^G_k$ and admissible gadgets $\rho$
and $\rho'$ for the $G$-action on $X$, 
there is an isomorphism 
\[
{\underset{i}\varprojlim} \ G_0\left(X^i(\rho)\right) \
\simeq \ {\underset{i}\varprojlim} \ G_0\left(X^i(\rho')\right).
\]
\end{cor}
\begin{proof}
Since $\rho$ and $\rho'$ are the admissible gadgets for the $G$-action on $X$,
$G$ acts freely on each $X \times U_i$ and on $X \times U'_j$.
Furthermore, the map
$X \stackrel{G}{\times}(U_i \oplus V'_j) \to X \stackrel{G}{\times}U_i$
is a vector bundle and hence the map
$G_0(X \stackrel{G}{\times}U_i) \to 
G_0(X \stackrel{G}{\times}(U_i \oplus U'_j))$ is surjective.
It follows from Proposition~\ref{prop:Approx} that this map is in fact an
isomorphism for all $j \gg 0$. Taking the limit, we get

\begin{equation}\label{eqn:Morita-SK1} 
{\underset{i}\varprojlim} \ G_0(X \stackrel{G}{\times}U_i)
\xrightarrow{\simeq} {\underset{i}\varprojlim} \ {\underset{j}\varprojlim} \
G_0(X \stackrel{G}{\times}(U_i \oplus U'_j)).
\end{equation}

The same argument shows that 
\begin{equation}\label{eqn:Morita-SK2}
{\underset{i}\varprojlim} \ G_0(X \stackrel{G}{\times}U'_i)
\xrightarrow{\simeq} {\underset{i}\varprojlim} \ {\underset{j}\varprojlim} \
G_0(X \stackrel{G}{\times}(U_i \oplus U'_j)).
\end{equation}
These two isomorphisms prove the corollary.
\end{proof}

As a consequence of Corollary~\ref{cor:Approx-Sing}, we can define the
$\sG_0$-theory for schemes with group actions as follows.

\begin{defn}\label{defn:SK-Sing}
For a linear algebraic group $G$ over $k$ and $X \in \Sch^G_k$,
we define
\[
\sG_0(X_G) : =  
{\underset{i}\varprojlim} \ G_0(X \stackrel{G}{\times}U_i)
\]
where $\rho = \left(V_i, U_i\right)$ is any admissible gadget for the 
$G$-action on $X$.
\end{defn}

It follows from Lemma~\ref{lem:Tor-ind-K} that the functor 
$X \mapsto \sG_0(X_G)$ is covariant for proper maps and
contravariant for flat maps in $\Sch^G_k$.

\subsection{Case of split reductive groups}\label{subsection:Split-case}
We first consider the case of the action of split reductive groups over $k$.  
Let $T$ be a split torus of rank $r$ over $k$.

\begin{lem}\label{lem:T-complete*}
For $X \in \Sch^T_k$, the map
$G^T_0(X) \to \sG_0(X_T)$ induces an isomorphism
\[
\wh{ {G^T_0(X)}_{I_T}} \xrightarrow{\simeq} \sG_0(X_T).
\]
\end{lem}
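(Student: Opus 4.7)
The plan is to compute $\sG_0(X_T)$ explicitly using a good admissible gadget together with iterated application of the equivariant localization sequence and the self-intersection formula, and then observe that the resulting filtration on $G^T_0(X)$ agrees topologically with the $I_T$-adic one.

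Choose a good admissible gadget $\rho = (V_i, U_i)$ for $T$ as in \S\ref{subsubsection:CAG}, associated to a basis $\{\chi_1, \ldots, \chi_r\}$ of $\wh{T}$, with $V_i = \bigoplus_{j=1}^r L_{\chi_j}^{\oplus i}$ and $U_i = \prod_{j=1}^r (L_{\chi_j}^{\oplus i} \setminus \{0\})$. Since $T$ acts freely on $U_i$ it acts freely on $X \times U_i$, so $\rho$ is an admissible gadget for the $T$-action on $X$ and by Thomason's theorem $G_0(X \stackrel{T}{\times} U_i) \cong G^T_0(X \times U_i)$. The problem is therefore to compute the inverse system $\{G^T_0(X \times U_i)\}_{i \ge 1}$.

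Introduce the intermediate open subschemes $Y_k = X \times \prod_{j \le k}(L_{\chi_j}^{\oplus i} \setminus \{0\}) \times \prod_{j > k} L_{\chi_j}^{\oplus i}$ of $X \times V_i$, so $Y_0 = X \times V_i$ and $Y_r = X \times U_i$. For each $k$, the closed complement $Z_k = Y_{k-1} \setminus Y_k$ is the zero section of a globally trivial $T$-equivariant vector bundle of fiber $L_{\chi_k}^{\oplus i}$ sitting inside $Y_{k-1}$. Applying the equivariant localization sequence, homotopy invariance, and the self-intersection formula \cite[Theorem~2.1]{VV} at each step, the push-forward $G^T_0(Z_k) \to G^T_0(Y_{k-1})$ becomes, after the homotopy-invariance identification $G^T_0(Y_{k-1}) \cong G^T_0(Z_k)$, multiplication by the top equivariant Chern class $\rho_k^i$, where $\rho_k = 1 - [L_{\chi_k}]$. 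Iterating for $k = 1, \ldots, r$, and noting naturality in $i$, this yields a canonical isomorphism of inverse systems of $R(T)$-modules
\[
G_0(X \stackrel{T}{\times} U_i) \;\cong\; G^T_0(X)/J^i_T\, G^T_0(X),
\qquad J^i_T = (\rho_1^i, \ldots, \rho_r^i),
\]
extending the computation from the proof of Lemma~\ref{lem:Trivial-AS} from the trivial-action, smooth case to the general case.

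Finally, since $J^i_T \subseteq I^i_T$ for every $i$ and $I^j_T \subseteq J^i_T$ for $j \gg i$, the ideals $J^i_T$ and $I^i_T$ define the same topology on $R(T)$, hence on any $R(T)$-module; taking the inverse limit gives
\[
\sG_0(X_T) \;\cong\; \varprojlim_i G^T_0(X)/I^i_T\, G^T_0(X) \;=\; \wh{{G^T_0(X)}_{I_T}},
\]
and under this identification the map $G^T_0(X) \to \sG_0(X_T)$ coming from pullback along the $T$-equivariant projection $X \times U_i \to X$ becomes exactly the canonical $I_T$-adic completion map. The main obstacle is the iterated self-intersection step when $X$ is singular: we need the equivariant excess-intersection/self-intersection calculation to survive the inductive quotient formation. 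This works because each $L_{\chi_k}^{\oplus i}$ is an external $T$-representation, so $Z_k \hookrightarrow Y_{k-1}$ is the zero section of a globally trivial equivariant vector bundle irrespective of singularities on $X$, and \cite[Theorem~2.1]{VV} applies directly at the level of $G$-theory.
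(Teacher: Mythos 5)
Your argument is correct, but it differs from the paper's proof at the decisive step. The paper never computes the kernel $J^i(X)=\ker\bigl(G^T_0(X)\to G_0(X^i_T)\bigr)$: it only records that homotopy invariance along $X\times V_i\to X$ together with right-exactness of the localization sequence exhibits $G_0(X^i_T)$ as the quotient $G^T_0(X)/J^i(X)$, and then invokes Edidin--Graham \cite[Theorem~2.1]{EG1} to identify the pro-$R(T)$-modules $\bigl\{G^T_0(X)/I^i_T G^T_0(X)\bigr\}$ and $\bigl\{G^T_0(X)/J^i(X)\bigr\}$. You instead compute the kernel explicitly, $J^i(X)=J^i_T\,G^T_0(X)$ with $J^i_T=(\rho_1^i,\dots,\rho_r^i)$, by stripping off one character block at a time and applying the equivariant self-intersection formula, which amounts to extending the computation the paper performs only for trivial actions on smooth schemes in Lemma~\ref{lem:Trivial-AS} to arbitrary $X\in\Sch^T_k$; the elementary cofinality of $J^i_T$ and $I^i_T$ in $R(T)$ then finishes the proof. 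What your route buys: it is self-contained apart from \cite[Theorem~2.1]{VV}, and it yields the sharper statement $G_0(X^i_T)\cong G^T_0(X)/J^i_T\,G^T_0(X)$, from which surjectivity of the inverse system (hence Mittag--Leffler, as used in Lemma~\ref{lem:ML-0}) falls out for free. What the paper's route buys: brevity, by delegating exactly this filtration comparison to \cite{EG1}, whose proof is in essence the computation you carry out. Two points in your write-up deserve a sentence each rather than an aside: (i) the self-intersection step is applied in $G$-theory over the possibly singular base $Z_k$ -- this is legitimate because the zero section of a $T$-equivariant vector bundle is a regular immersion and the equivariant Koszul-resolution argument gives $s^*s_*=\lambda_{-1}$ of the conormal with no regularity hypothesis on $Z_k$, but the hypotheses of the cited theorem should be checked explicitly; and (ii) the compatibility of your isomorphisms with the transition maps $f^*_i$ (open restriction followed by the inverse of the vector-bundle pullback, as in Section~\ref{section:EKT-S}) is what makes passage to the inverse limit valid, and also what identifies $G^T_0(X)\to\sG_0(X_T)$ with the completion map; it follows from functoriality of flat pullback, but it is precisely where your ``naturality in $i$'' is used.
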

\begin{proof}
Let $\rho = \left(V_i, U_i\right)$ be a good admissible gadget for $T$.
It suffices to show that the map 
$\wh{ {G^T_0(X)}_{I_T}} \xrightarrow{\simeq} \sG^{\rho}_0(X)$ is an isomorphism. 

For any $i \ge 0$, let $J^i(X) = 
{\rm Ker}\left(G^T_0(X) \to G_0(X^i_T(\rho))\right)$.
The surjection
\[
G^T_0(X) \xrightarrow{\simeq} G^G_0(X \times V_i) \surj G_0(X^i_T(\rho))
\] 
implies that the map $``{{\underset{i}\varprojlim}}" \ \frac{G^T_0(X)}{J^i(X)}
\to ``{{\underset{i}\varprojlim}}" \ G_0(X^i_T(\rho))$ is an isomorphism.
Hence, it suffices to show that the pro-$R(T)$-modules
$``{{\underset{i}\varprojlim}}" \ \frac{G^T_0(X)}{I^i_TG^T_0(X)}$ and
$``{{\underset{i}\varprojlim}}" \ \frac{G^T_0(X)}{J^i(X)}$ are isomorphic.
But this follows from \cite[Theorem~2.1]{EG1}.
\end{proof}

\begin{lem}\label{lem:ML-0}
Let $G$ be a connected split reductive group over $k$
and let $X \in \Sm^G_k$. Then the map
$K_0(X_G) \to \sK_0(X_G)$ is an isomorphism.
\end{lem}
\begin{proof}
It is enough to show that for an admissible gadget $\rho = 
\left(V_i, U_i\right)$ for $G$, the pro-$R(G)$-module 
$``{{\underset{i}\varprojlim}}" \ K_0(X^i_G(\rho))$
satisfies the Mittag-Leffler condition.
We first consider the case of a split torus $T$. By 
Theorem~\ref{thm:Stable-Ind**}, we can assume that $\rho$
is a good admissible gadget. In that case, we have shown in 
Lemma~\ref{lem:T-complete*} that there is an isomorphism 
$``{{\underset{i}\varprojlim}} \ \frac{"K^T_0(X)}{J^i(X)} \simeq 
``{{\underset{i}\varprojlim}}" \ K_0(X^i_T(\rho))$ of pro-$R(T)$-modules. 
Since the first pro-$R(T)$-module is Mittag-Leffler, 
so should be the second one.
The general case of connected split reductive groups now follows from
Proposition~\ref{prop:Push-Pull**}. 
\end{proof}

\begin{prop}\label{prop:Alg-AS-0}
Let $G$ be a connected split reductive group over $k$
and let $X \in \Sm^G_k$. Then the morphisms of $\wh{R(G)}$-modules
\begin{equation}\label{eqn:Alg-AS0*0}
\wh{{K^G_0(X)}_{I_G}} \xrightarrow{\wh{\beta^G_X}} 
\sK_0(X_G) \xleftarrow{\tau^G_X} K_0(X_G) 
\end{equation}
are isomorphisms.
For $X \in \Sch^G_k$, the map 
\begin{equation}\label{eqn:Alg-AS0*0-Sing}
\wh{{G^G_0(X)}_{I_G}} \xrightarrow{\wh{\beta^G_X}} \sG_0(X_G)
\end{equation}
is an isomorphism of $\wh{R(G)}$-modules.
\end{prop}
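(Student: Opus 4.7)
\medskip

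\noindent\textbf{Proof proposal.} The plan is to follow the template used in Proposition~\ref{prop:Alg-AS}, reducing from a connected split reductive group $G$ to a split maximal torus $T$ via push-pull operators, and then invoking Lemma~\ref{lem:T-complete*} as the torus input. The key saving is that Lemma~\ref{lem:T-complete*} (built on \cite[Theorem~2.1]{EG1}) does not require any filtrability of $X$, which is why we can drop all hypotheses beyond ``$G$ connected split reductive''. I will treat the smooth case first and the singular case afterwards, since the latter requires an honest extension of the push-pull machinery to the $\sG_0$-setting.

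For the smooth statement, the rightmost map $\tau^G_X$ is an isomorphism by Lemma~\ref{lem:ML-0} combined with the Milnor exact sequence~\eqref{eqn:MES}. For $\wh{\beta^G_X}$, I would form the diagram
\[
\xymatrix@C2.5pc{
\wh{K^G_0(X)}_{I_G} \ar[r]^{{\wh r}^G_X} \ar[d]_{\wh{\beta^G_X}} &
\wh{K^T_0(X)}_{I_G} \ar[r]^{{\wh s}^G_X} \ar[d]_{\wh{\beta^T_X}} &
\wh{K^G_0(X)}_{I_G} \ar[d]^{\wh{\beta^G_X}} \\
\sK_0(X_G) \ar[r]_{{\wt r}^G_X} & \sK_0(X_T) \ar[r]_{{\wt s}^G_X} & \sK_0(X_G)
}
\]
whose horizontal composites are identities by Theorem~\ref{thm:PPEquiv} and Corollary~\ref{cor:Push-Pull}. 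So it suffices to show $\wh{\beta^T_X}$ is an isomorphism. The Edidin--Graham comparison (as used in Proposition~\ref{prop:Borel-complete}) identifies $\wh{K^T_0(X)}_{I_G}$ with $\wh{K^T_0(X)}_{I_T}$, and since $X$ is smooth we can replace $K^T_0$ by $G^T_0$ and $\sK_0$ by $\sG_0$; then Lemma~\ref{lem:T-complete*} gives exactly the required isomorphism $\wh{G^T_0(X)}_{I_T}\cong\sG_0(X_T)$.

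For the singular statement, I would repeat the argument with $K$ replaced by $G$, but first I need push-pull operators ${\wt r}^G_{T,X}$, ${\wt s}^G_{T,X}$ on the pro-systems $\{G_0(X^i_G)\}$ and $\{G_0(X^i_T)\}$. Starting from a good admissible gadget $\rho$ for the $G$-action on $X$, the maps $\pi^i_X\colon X^i_B\to X^i_G$ are smooth and projective by (a singular analogue of) Lemma~\ref{lem:ELM-Com}: smoothness is flat-local descent from the fact that $X\times U_i\to X^i_G$ is a $G$-torsor and $G/B$ is smooth projective, while the diagrams
\[
\xymatrix@C1.6pc{ X^i_B \ar[r]^{\pi^i_X} \ar[d] & X^i_G \ar[d] \\
X^j_B \ar[r]_{\pi^j_X} & X^j_G }
\]
remain Cartesian and transverse (the normal bundle identification is inherited from $X\times U_i\hookrightarrow X\times U_j$). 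Flat pullback along $\pi^i_X$ and projective pushforward (valid for $G$-theory since $\pi^i_X$ is projective and flat, hence of finite Tor-dimension) then give strict morphisms of inverse systems; base change (Lemma~\ref{lem:PushFor}) guarantees compatibility with the structure maps $f^*_i$, and Theorem~\ref{thm:PPEquiv} applied at each level shows ${\wt s}^G_{T,X}\circ{\wt r}^G_{T,X}=\mathrm{id}$. After this setup, the same diagram as above—with $K$-groups replaced by $G$-groups and $\sK_0$ by $\sG_0$—reduces everything to showing that $\wh{G^T_0(X)}_{I_G}\to\sG_0(X_T)$ is an isomorphism, which is precisely Lemma~\ref{lem:T-complete*} combined with the $I_G$-vs-$I_T$ topology comparison.

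The main obstacle I expect is the singular-case push-pull construction: verifying that the level-wise maps $G^G_0(X\times U_i)\rightleftarrows G^B_0(X\times U_i)$ coming from Theorem~\ref{thm:PPEquiv} descend to well-defined strict morphisms of the pro-systems $\{G_0(X^i_G)\}\rightleftarrows\{G_0(X^i_T)\}$. This amounts to checking the base-change/transversality statements of Lemma~\ref{lem:ELM-Com} and Lemma~\ref{lem:PushFor} in a context where the source $X$ is merely quasi-projective; the smoothness of $G$, $B$, $T$ and of the quotient maps $U_i\to U_i/B\to U_i/G$ is what keeps all of these morphisms flat and of finite Tor-dimension, so the required functorialities of $G$-theory (projection formula, flat pullback, proper pushforward) remain available, but this requires careful bookkeeping since $X$ itself is not assumed smooth.
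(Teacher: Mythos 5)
Your overall strategy is the same as the paper's: $\tau^G_X$ is handled by Lemma~\ref{lem:ML-0}, the reductive case is reduced to the split maximal torus by the push-pull operators (Theorem~\ref{thm:PPEquiv} and Corollary~\ref{cor:Push-Pull}) exactly as in Theorem~\ref{thm:MAIN}, the $I_G$- versus $I_T$-adic comparison comes from Edidin--Graham, and the torus case is Lemma~\ref{lem:T-complete*}; this matches the paper's proof step for step in the smooth case.

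The one place where your argument has a genuine gap is the singular push-pull construction, and it is precisely the point the paper flags. You justify the compatibility of the push-forwards $(\pi^i_X)_*$ with the structure maps of the pro-system by asserting that the squares analogous to ~\eqref{eqn:ELM-Com1} ``remain Cartesian and transverse'' and by citing Lemma~\ref{lem:ELM-Com} and Lemma~\ref{lem:PushFor}. But those lemmas, and the notion of transverse square from Panin, are formulated for smooth schemes and for $K$-theory viewed as an oriented theory on $\Sm_k$; when $X$ is only quasi-projective the mixed quotients $X^i_G$, $X^i_B$ are singular, so Lemma~\ref{lem:PushFor} does not apply as stated, and the structure maps $f^*_i$ of $\{G_0(X^i_G)\}$ are not flat pull-backs but composites $s_i^*\circ t_i^*$ of a zero-section pull-back with an open-immersion pull-back, so one cannot simply invoke flat base change either. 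What is needed is either a Tor-independence base-change statement in $G$-theory for the regular immersion $s_i$ against the proper flat map $\pi_X$, or the paper's more elementary workaround: since $s_i^*$ is invertible by homotopy invariance, one replaces the pull-back $G_*(X^{i+1}_G)\to G_*(X^i_G)$ by the open-immersion pull-back $G_*(X^{i+1}_G)\to G_*\bigl(X\stackrel{G}{\times}(U_i\oplus V)\bigr)$, and open (hence flat) pull-backs always commute with projective push-forward, which makes all diagrams in the singular analogue of Proposition~\ref{prop:Push-Pull**} commute without any transversality input. You correctly identify this as the delicate step and list the right ingredients (flatness of $\pi^i_X$, projection formula), but as written the step rests on smooth-scheme lemmas that do not cover the situation, so you should supply one of these two fixes before the reduction to Lemma~\ref{lem:T-complete*} is complete.
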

\begin{proof}
The second isomorphism of ~\eqref{eqn:Alg-AS0*0} follows from 
Lemma~\ref{lem:ML-0}. To prove the
first isomorphism, we use Theorem~\ref{thm:PPEquiv} and 
Corollary~\ref{cor:Push-Pull} and argue as in the proof of  
Theorem~\ref{thm:MAIN} to reduce to the case of a split torus.
In the singular case, it follows from Proposition~\ref{prop:Push-Pull**}
that Corollary~\ref{cor:Push-Pull} also holds for $\sG_0$ and we again
reduce to the case of split torus.
Finally, the case of split torus follows from Lemma~\ref{lem:T-complete*}. 
\end{proof}

\subsection{The general case}\label{subsection:Gen-case}
We now deduce the completion theorem for all groups $G$ and all 
$G$-schemes from the case of split reductive groups using the
Morita equivalence as follows. 

\begin{lem}\label{lem:Morita-SK}
Let $H$ be a closed normal subgroup of a linear algebraic group $G$ and let
$F = G/H$. Let $f: X \to Y$ be a morphism in $\Sch^G_k$ 
which is an $H$-torsor. Then there is an
isomorphism $\sG_0(Y_F) \simeq \sG_0(X_G)$.
\end{lem}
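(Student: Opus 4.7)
The plan is to mimic the proof of Proposition~\ref{prop:Morita0} (the Morita isomorphism for motivic Borel spaces in $\sH(k)$) at the level of $\sG_0$, using the admissible gadgets for $F = G/H$ to produce a compatible admissible gadget for the $G$-action on $X$ in the sense of \S~\ref{subsubsection:Add-Sc}, together with Lemma~\ref{lem:Approx-Sing} to guarantee independence of the chosen gadget.

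First I would choose any admissible gadget $\rho = (V_i, U_i)_{i\ge 1}$ for $F$. Via the quotient map $G \to F$, each $V_i$ becomes a $k$-rational $G$-representation in which $U_i \subseteq V_i$ is a $G$-invariant open subset; since $F = G/H$ acts freely on $U_i$, the closed normal subgroup $H$ acts trivially on $U_i$. Combined with the freeness of the $H$-action on $X$ (from the torsor hypothesis), the diagonal action of $G$ on $X \times U_i$ is free, and iterated quotients give
\[
(X \times U_i)/G \;=\; \bigl((X \times U_i)/H\bigr)/F \;=\; (Y \times U_i)/F \;=\; Y \stackrel{F}{\times} U_i,
\]
which is quasi-projective by Lemma~\ref{lem:sch}. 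Thus $X \times U_i \in \Sch^G_{\mathrm{free}/k}$, and the remaining conditions (1)--(3) in the definition of an admissible gadget involve only $V_i$ and $U_i$, so they transfer automatically from $\rho$. Hence $\rho$ serves as an admissible gadget for the $G$-action on $X$.

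Next I would record the key identification of schemes: for each $i \ge 1$ the natural map
\[
X \stackrel{G}{\times} U_i \;\xrightarrow{\ \cong\ }\; Y \stackrel{F}{\times} U_i
\]
is an isomorphism by the computation above, and these isomorphisms are compatible with the transition maps $f_i^*$ of the two inverse systems used to define $\sG_0$. Passing to the inverse limit,
\[
\sG_0(X_G) \;=\; \varprojlim_i G_0\bigl(X \stackrel{G}{\times} U_i\bigr) \;\cong\; \varprojlim_i G_0\bigl(Y \stackrel{F}{\times} U_i\bigr) \;=\; \sG_0(Y_F).
\]
Finally, Lemma~\ref{lem:Approx-Sing} guarantees that both sides are independent of the admissible gadget chosen, so this isomorphism is canonical.

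The only real subtlety is the verification that $\rho$ is simultaneously an admissible gadget for the $G$-action on $X$ and for the $F$-action on $Y$; once the freeness of the $G$-action on $X \times U_i$ and the scheme-theoretic identification of the quotients is established, everything else is formal. There are no homotopy-theoretic issues to worry about here (unlike in Proposition~\ref{prop:Morita0}) because $\sG_0$ is defined directly as an inverse limit of $G_0$-groups of honest schemes.
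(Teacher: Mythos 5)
Your proposal is correct and follows essentially the same route as the paper: use an admissible gadget for $F$ as an admissible gadget for the $G$-action on $X$, identify $X \stackrel{G}{\times} U_i \cong Y \stackrel{F}{\times} U_i$ level by level, and invoke Lemma~\ref{lem:Approx-Sing} to see that the resulting inverse limit computes $\sG_0(X_G)$ independently of the gadget. Your extra verifications (freeness of the diagonal $G$-action and quasi-projectivity of the quotient via Lemma~\ref{lem:sch}) are exactly the points the paper's proof uses, just spelled out in more detail.
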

\begin{proof}
Let $\rho = \left(V_i, U_i\right)$ be an admissible gadget for $F$. Then
$G$ acts freely on $X\times U_i$ and the map 
$X \times U_i \to Y \times U_i$ is $G$-equivariant which is an $H$-torsor.
This in turn implies that the map $X \stackrel{G}{\times} U_i \to 
Y \stackrel{F}{\times} U_i = Y^i_F(\rho)$ is an isomorphism and so is the
induced map on $G$-theory. Thus, to prove the lemma, we only have to show that 
there is an isomorphism
\begin{equation}\label{eqn:Morita-SK0}
{\underset{i}\varprojlim} \ G_0(X \stackrel{G}{\times} U_i) 
\xrightarrow{\simeq} \sG_0(X_G).
\end{equation}

For this, we observe that if $\rho = \left(V_i, U_i\right)$ is the 
admissible gadget for $F$ as chosen above, then each $V_i$ is a $k$-rational 
representation of $G$ with $G$-invariant open subset $U_i$.
Since $H$ acts freely on $X$ and $F$ acts freely on each $U_i$, we see that 
$G$ acts freely on each $X \times U_i$. In particular,
$\rho$ is an admissible gadget for the $G$-action on $X$.
The isomorphism \eqref{eqn:Morita-SK0} now follows at once from
Corollary~\ref{cor:Approx-Sing}.
\end{proof}

\begin{cor}\label{cor:Morita-SK**}
Let $H \subseteq G$ be a closed subgroup and let $X \in \Sch^H_k$. 
Then there is an isomorphism $\sG_0(X_H) \simeq \sG_0(Y_G)$,
where $Y = X \stackrel{H}{\times} G$.
\end{cor}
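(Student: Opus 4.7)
The plan is to mimic the proof of Corollary~\ref{cor:Morita1} verbatim, replacing Proposition~\ref{prop:Morita0} (the $\sH(k)$-level Morita statement) by its $\sG_0$-theoretic analogue Lemma~\ref{lem:Morita-SK}. Both intermediate isomorphisms used there were instances of one general ``quotient-by-a-normal-subgroup'' principle, and that principle is exactly what Lemma~\ref{lem:Morita-SK} supplies in our setting.

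Concretely, I would equip $X \times G$ with the action of $H \times G$ defined by
\[
(h, g) \cdot (x, g') \ = \ (hx, gg'h^{-1}),
\]
and equip $X$ with the action $(h, g) \cdot x = hx$ (i.e., through projection $H \times G \to H$). First I would apply Lemma~\ref{lem:Morita-SK} with ambient group $H \times G$, normal subgroup $\{e\} \times G$, and quotient $H$: the projection $X \times G \to X$ is then $(H \times G)$-equivariant and a torsor for $\{e\} \times G$, yielding
\[
\sG_0(X_H) \ \cong \ \sG_0\bigl((X \times G)_{H \times G}\bigr).
\]
Next I would apply Lemma~\ref{lem:Morita-SK} with the same ambient group $H \times G$, but now with normal subgroup $H \times \{e\}$ and quotient $G$: the projection $X \times G \to X \stackrel{H}{\times} G = Y$ is $(H \times G)$-equivariant and a torsor for $H \times \{e\}$ (this is precisely how the mixed quotient $X \stackrel{H}{\times} G$ is defined for the chosen action), giving
\[
\sG_0\bigl((X \times G)_{H \times G}\bigr) \ \cong \ \sG_0(Y_G).
\]
Composing these two isomorphisms produces the desired canonical identification $\sG_0(X_H) \cong \sG_0(Y_G)$.

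The only nontrivial verifications, none of which I anticipate being a real obstacle, are (i) that $\{e\} \times G$ and $H \times \{e\}$ are indeed closed normal subgroups of $H \times G$ with the claimed quotients, which is a direct conjugation check; (ii) that the two projection maps above are genuine torsors for the respective normal subgroups under the given $H \times G$-action, which is immediate from the formula for the action; and (iii) that the hypotheses of Lemma~\ref{lem:Morita-SK} (namely that the base of the torsor be quasi-projective so that $\sG_0$ of the associated Borel space is defined) are satisfied, which follows from Lemma~\ref{lem:sch}. The canonicity of the resulting isomorphism is inherited from that in Lemma~\ref{lem:Morita-SK}.
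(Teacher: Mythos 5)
Your proposal is correct and is precisely the argument the paper intends: it states that the proof of Corollary~\ref{cor:Morita-SK**} is the same as that of Corollary~\ref{cor:Morita1} with Proposition~\ref{prop:Morita0} replaced by Lemma~\ref{lem:Morita-SK}, and you have carried out exactly that two-step application (via the $H\times G$-action on $X\times G$ and the two factor subgroups) together with the routine verifications the paper omits.
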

\begin{proof}
The proof is exactly same as the proof of Corollary~\ref{cor:Morita1}
once we have Lemma~\ref{lem:Morita-SK}. We omit the details.
\end{proof}

\begin{thm}\label{thm:Alg-AS-All}
Let $G$ be any linear algebraic group over $k$
and let $X \in \Sm^G_k$. Then the morphisms of $\wh{R(G)}$-modules
\begin{equation}\label{eqn:Alg-AS0*-ex}
\wh{{K^G_0(X)}_{I_G}} \xrightarrow{\wh{\beta^G_X}} 
\sK_0(X_G) \xleftarrow{\tau^G_X} K_0(X_G) 
\end{equation}
are isomorphisms. For $X \in \Sch^G_k$, the map
\begin{equation}\label{eqn:Alg-AS0*-Sing}
\wh{{G^G_0(X)}_{I_G}} \xrightarrow{\wh{\beta^G_X}} 
\sG_0(X_G) 
\end{equation}
is an isomorphism of $\wh{R(G)}$-modules.
\end{thm}  
\begin{proof}
We embed $G$ as a closed subgroup of some general linear group $GL_n$ over $k$
and set $Y = X \stackrel{G}{\times} GL_n$. Then $Y \in \Sch^{GL_n}_k$ and
is smooth if $X$ is so.
Moreover, it follows from Theorem~\ref{thm:Morita} that the map
$G^{GL_n}_*(Y) \to G^G_*(X)$ is an isomorphism of $R(GL_n)$-modules.
Hence the map $\wh{{G^{GL_n}_*(Y)}_{I_{GL_n}}} \to \wh{{G^G_*(X)}_{I_{GL_n}}}$
is an isomorphism. On the other hand, we have observed before that the 
$I_{GL_n}$-adic and $I_G$-adic topologies on $G^G_*(X)$ coincide. 
We conclude that the map 
$\wh{{G^{GL_n}_*(Y)}_{I_{GL_n}}} \to \wh{{G^G_*(X)}_{I_{G}}}$
is an isomorphism. 

It follows from Corollary~\ref{cor:Morita1} that
$K_*(Y_{GL_n}) \simeq K_*(X_G)$ if $X \in \Sm^G_k$. 
It follows from Corollary~\ref{cor:Morita-SK**}
that $\sG_0(Y_{GL_n})\simeq \sG_0(X_G)$. Thus we have reduced the proof of the
theorem to the case of $GL_n$. But this case is covered by
Proposition~\ref{prop:Alg-AS-0}. 
\end{proof}

\vskip .3cm

As a consequence of Theorems~\ref{thm:ASMain} and ~\ref{thm:Alg-AS-All}, 
we get

\begin{cor}\label{cor:Alg-Top}
Given any complex linear algebraic group $G$, the realization
map
\[
K^{\rm alg}_0(B_G) \to K^{\rm top}_0(B_G)
\]
from the algebraic to the topological $K$-theory of the classifying space
$B_G$ is an isomorphism.
\end{cor}

The analogous statement for the algebraic ({\sl cf.} \cite{Krishna3})
and complex cobordism of $B_G$ is a conjecture of Yagita
({\sl cf.} \cite[Conjecture~12.1]{Yagita}).

\section{Failure of Atiyah-Segal completion theorem}
\label{section:Fail}
In this section, we show that the algebraic version of the
Atiyah-Segal completion problem has negative solution if the 
underlying smooth scheme is not filtrable. To show this, we
take our ground field to be the field of complex numbers $\C$ and $G = \G_m$.
Consider the closed subgroup $H = \{1, -1\} \subsetneq G$ and set
$X = G/H$. Then $X$ is a $G$-scheme having finite stabilizers. 

Let $\Z_2$ denote the $2$-adic completion of $\Z$.
For $m \ge 1$, let $\mu_m$ denote the subgroup of $m$-th roots of unity
in $\C^*$. Given a commutative ring $A$, an element $a \in A$ and an
$A$-module $M$, let $\ _aM$ denote the submodule of $M$ consisting of those
elements which are annihilated by $a$.
We wish to prove the following. This will produce counterexamples
to the completion theorem if we weaken the filtrability condition.

\begin{thm}\label{thm:Counter-E}
Let $X = G/H$ be the homogeneous space as above. Then 
\begin{enumerate}
\item
For $p \ge 0$, the map $K_p(X_G) \to \sK_p(X_G)$ is an isomorphism.
\item 
$\wh{{K^G_0(X)}_{I_G}} \xrightarrow{\simeq} K_0(X_G)$.
\item
For $p > 0$ odd, the map $\wh{{K^G_p(X)}_{I_G}} \to K_p(X_G)$ is 
an isomorphism.
\item
For $p > 0$ even, there is a short exact sequence
\[
0 \to \wh{{K^G_p(X)}_{I_G}} \to K_p(X_G) \to \Z_2 \to 0.
\]
\end{enumerate}
\end{thm}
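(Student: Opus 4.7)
By Corollary~\ref{cor:Morita1} applied to $\mu_2 \subset G$, there is a canonical isomorphism $X_G = (G/\mu_2)_G \cong B_{\mu_2}$ in $\sH(\C)$. Choosing the good admissible gadget for $G = \G_m$ with $V$ the standard $1$-dimensional representation, the finite approximations become $B^i := B_{\mu_2}^{(i)} = (\A^i \setminus 0)/\mu_2$; by Proposition~\ref{prop:Basic-K}(2) together with Thomason's Morita theorem, $K_p(B^i) \cong K^{\mu_2}_p(\A^i \setminus 0)$.

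Next I apply the $\mu_2$-equivariant localization sequence for $\{0\} \hookrightarrow \A^i$. With $t \in R(\mu_2) = \Z[t]/(t^2-1)$ the non-trivial character, the equivariant Euler class of $\A^i$ is $(1-t)^i = 2^{i-1}(1-t)$. Using the identification $K^{\mu_2}_p(\mathrm{pt}) = K_p(\C) \otimes R(\mu_2) = K_p(\C)\cdot 1 \oplus K_p(\C)\cdot (1-t)$, a direct computation in this basis yields kernel $K^i_p \cong K_p(\C) \oplus {}_{2^{i-1}} K_p(\C)$ and cokernel $C^i_p \cong K_p(\C) \oplus K_p(\C)/2^{i-1} K_p(\C)$ of multiplication by $(1-t)^i$, together with the short exact sequence
\[
0 \to C^i_p \to K_p(B^i) \to K^i_{p-1} \to 0.
\]

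To form the inverse system I compare localization sequences via $\iota : \A^i \hookrightarrow \A^{i+1}$. The self-intersection identity $\iota^* \iota_* = \cdot e(N_{\A^i/\A^{i+1}}) = \cdot(1-t)$ shows that the middle column of the localization ladder is the identity while the left column is multiplication by $(1-t)$. Consequently, the transition $C^{i+1}_p \to C^i_p$ is the evident surjection (identity on the $K_p(\C)$-summand and the quotient $K_p(\C)/2^i \twoheadrightarrow K_p(\C)/2^{i-1}$), and $K^{i+1}_p \to K^i_p$ is given in parameters by $(b,u) \mapsto (u, 2u)$. By Suslin's rigidity, the $2$-primary torsion of $K_{p-1}(\C)$ is $\Q_2/\Z_2$ when $p-1$ is odd $\ge 1$ and is trivial when $p-1$ is even; in either case the images of $\{K^j_{p-1}\}$ stabilize after one step, so both inverse systems satisfy Mittag-Leffler. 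Hence so does $\{K_p(B^i)\}$, and the Milnor sequence~\eqref{eqn:MES} yields part~(1).

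Applying ${\underset{i}\varprojlim}$ to the short exact sequence (with $\varprojlim^1 C^i_p = 0$ by Mittag-Leffler) gives
\[
0 \to {\underset{i}\varprojlim}\, C^i_p \to K_p(X_G) \to {\underset{i}\varprojlim}\, K^i_{p-1} \to 0.
\]
Since $I_G^n K^G_p(X) = \text{image}(\cdot(1-t)^n)$, the left term is canonically $\wh{{K^G_p(X)}_{I_G}} = K_p(\C) \oplus {\underset{n}\varprojlim}\, K_p(\C)/2^n K_p(\C)$, and the inclusion is precisely the completed Atiyah-Segal map $\wh{\beta^G_X}$. The transitions $(b,u) \mapsto (u, 2u)$ force $b_j = u_{j+1}$, collapsing the free $K_{p-1}(\C)$-summand and identifying ${\underset{i}\varprojlim}\, K^i_{p-1}$ with the $2$-adic Tate module ${\underset{n}\varprojlim}\, {}_{2^n} K_{p-1}(\C)$, which equals $\Z_2$ when $p-1 \ge 1$ is odd (i.e., $p$ even $\ge 2$) and vanishes when $p-1$ is even or $p = 0$. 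Substituting yields parts (2), (3), and (4).

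\textbf{Main obstacle.} The delicate step is deriving the explicit transition $(b,u) \mapsto (u, 2u)$ on kernels via the excess factor $\iota^*\iota_* = \cdot(1-t)$; this collapses the free summand in the inverse limit and produces the Tate module (hence the $\Z_2$-discrepancy in even positive degrees). A second essential input is Suslin's rigidity for $K_*(\C)$, needed both to verify Mittag-Leffler and to evaluate the Tate module.
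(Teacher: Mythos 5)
Your proposal is correct and follows essentially the same route as the paper: the paper likewise extracts, from the localization/self-intersection sequence over the gadget (quoting Vezzosi--Vistoli), a short exact sequence of inverse systems $0 \to K^G_p(X)/(\rho^i) \to K_p(X^i_G) \to {}_{\rho^i}K^G_{p-1}(X) \to 0$ with $\rho = 1-t$, identifies the left limit with the $I_G$-adic completion, and computes $\varprojlim$ and $\varprojlim^1$ of the torsion system via Suslin's theorem and the Milnor sequence, exactly as you do. The only difference is bookkeeping: where you pass to $B_{\mu_2}$ by Morita first and compute kernels, cokernels and the transition $(b,u)\mapsto(u,2u)$ explicitly in the basis $\{1,1-t\}$ of $R(\mu_2)$, the paper keeps the $\G_m$-equivariant formulation and identifies the torsion system pro-isomorphically with $\left\{{\rm Tor}^1_{\Z}\left(S_n, K_*(\C)\right)\right\} \cong \left\{\ _{2^{n-1}}K_*(\C)\right\}$ with transition multiplication by $2$ (Lemma~\ref{lem:2-torsion}, Proposition~\ref{prop:simple}), which is the same Tate-module computation.
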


We shall prove this theorem in several steps.
Let us consider the inverse system of rings 
$\left\{R_n = \frac{\Z[u]}{\left(u^n, u(u-2)\right)}\right\}_{n \ge 1}$ with the 
obvious quotient homomorphisms. 
We begin with the following elementary lemma.

\begin{lem}\label{lem:2-torsion}
For any abelian group $M$, the inverse system 
$\left\{{\rm Tor}^1_{\Z}\left(R_n, M\right)\right\}$ is isomorphic to
the inverse system $\left\{\ _{2^{n-1}}M\right\}$ whose structure maps
are given by multiplication by $2$.
\end{lem}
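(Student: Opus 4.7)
The whole assertion rests on understanding $R_n$ as an abelian group. The relation $u(2-u) = 0$ says $u^{2} = 2u$, and inductively $u^{k} = 2^{k-1}u$ for all $k \geq 1$; combined with $u^{n} = 0$ this gives $2^{n-1}u = 0$. I would verify (by the standard argument: if $a + bu \in (u^n, 2u - u^2)$, setting $u = 0$ forces $a = 0$, and setting $u = 2$ forces $b \in 2^{n-1}\Z$; conversely $2^{n-1}u = u^n - h(u)(2u - u^2)$ for a suitable $h$) that these are the only relations, so that as a $\Z$-module
\[
R_n \;\cong\; \Z \oplus \Z/2^{n-1},
\]
with basis $\{1, u\}$ and $u$ of order $2^{n-1}$.

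With this in hand, the obvious free resolution is
\[
0 \longrightarrow \Z \xrightarrow{\;\cdot\, 2^{n-1}\;} \Z^{2} \xrightarrow{\;\pi_n\;} R_n \longrightarrow 0,
\]
where $\pi_n(a,b) = a + bu$ and the injection sends $1 \mapsto (0,1)$. Tensoring with $M$ and taking homology yields
\[
{\rm Tor}^1_\Z(R_n, M) \;=\; \ker\!\bigl(M \xrightarrow{\cdot\, 2^{n-1}} M\bigr) \;=\; {}_{2^{n-1}}M,
\]
which is the desired identification at each level.

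It remains to compute the transition maps. I would lift the quotient $R_{n+1} \twoheadrightarrow R_n$ (which sends $1 \mapsto 1$, $u \mapsto u$) to a chain map between the two resolutions. Because $\pi_{n+1}(a,b) = a+bu \mapsto a+bu = \pi_n(a,b)$, the identity of $\Z^2$ does the job in the middle column. The left column is then forced: a map $f : \Z \to \Z$ must satisfy $2^{n-1} f(1) = 2^{n}$, hence $f$ is multiplication by $2$. Tensoring with $M$ and taking kernels, the transition map ${}_{2^n}M \to {}_{2^{n-1}}M$ is exactly multiplication by $2$ (which is well-defined since $2^n x = 0$ implies $2^{n-1}(2x) = 0$). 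This gives the claimed isomorphism of inverse systems.

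The argument is essentially routine once the structure of $R_n$ is pinned down; the only mild subtlety is verifying that $2^{n-1}\Z$ is exactly the set of integers $b$ with $bu = 0$ in $R_n$ (so that no extraneous relations appear beyond the two displayed ones), which is the evaluation-at-$0$-and-$2$ computation above.
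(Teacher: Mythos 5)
Your proof is correct, but it takes a genuinely different route from the paper's. You pin down the abelian group structure of $R_n$ directly: from $u(2-u)=0$ you get $u^k=2^{k-1}u$, hence $2^{n-1}u=0$, and the evaluation argument at $u=0$ and $u=2$ shows that the kernel of $\Z^2 \to R_n$, $(a,b)\mapsto a+bu$, is exactly $0\oplus 2^{n-1}\Z$, so $R_n \cong \Z \oplus \Z/2^{n-1}$; both $\Tor^1_{\Z}(R_n,M)\cong {}_{2^{n-1}}M$ and the transition maps are then read off from the explicit two-term free resolution, by lifting the quotient $R_{n+1}\twoheadrightarrow R_n$ to a chain map (the identity in the middle forces multiplication by $2$ on the left term). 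The paper never computes $R_n$ itself: it instead embeds $R_n$ in $\Z\times \Z/2^n$ with cokernel $\Z/2$ (after checking exactness of a four-term sequence and the vanishing of an intersection term), applies the Tor long exact sequence to get $0 \to \Tor^1_{\Z}(R_n,M) \to {}_{2^n}M \to {}_{2}M$, and identifies the last map as multiplication by $2^{n-1}$ via a diagram of resolutions of $\Z/2^{n+1}$, $\Z/2^n$ and $\Z/2$. Your approach is more elementary and makes the compatibility of the level-wise isomorphisms with the structure maps completely transparent (the paper is terse on exactly that point); the paper's approach avoids determining the full structure of $R_n$ at the cost of the exact-sequence verification. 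One small slip: in your resolution the injection $\Z \to \Z^2$ must send $1 \mapsto (0,2^{n-1})$, as the label $\cdot\,2^{n-1}$ on the arrow indicates, not $1\mapsto(0,1)$ as written; with $(0,1)$ the sequence would not be exact. This is cosmetic and does not affect the argument.
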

\begin{proof}
We first observe that $u^2 = 2u$ in $R_n$ for all $n \ge 1$.
Repeatedly applying this relation, it is easy to see that
$R_n \simeq \frac{\Z[u]}{\left(2^{n-1}u, u(u-2)\right)}$ for $n \ge 1$.
But this ring can be easily seen to be isomorphic to
$\Z \times ({\Z}/{2^{n-1}\Z})$ as an abelian group.
The lemma follows immediately from this.
\end{proof}

\begin{lem}\label{lem:2-torsion**}
Let $\left\{S_n\right\}$ denote the inverse system of rings
$\left\{S_n = \frac{\Z[t]}{\left((1-t)^n, t^2-1\right)}\right\}_{n \ge 0}$ with 
the obvious quotient homomorphisms. Then
\begin{enumerate}
\item
For $p \ge 0$ even, ${\rm Tor}^1_{\Z}\left(S_n, K_p(\C)\right) = 0$ for
each $n \ge 0$.
\item
For $p > 0$ odd, 
${\underset{n}\varprojlim} \ {\rm Tor}^1_{\Z}\left(S_n, K_p(\C)\right) \simeq
\Z_2$ and ${\underset{n}{\varprojlim}^1} \ 
{\rm Tor}^1_{\Z}\left(S_n, K_p(\C)\right) = 0$. 
\end{enumerate}
\end{lem}
\begin{proof}
There is a strict isomorphism of the inverse systems of rings
$\left\{S_n\right\} \xrightarrow{\simeq} \left\{R_n\right\}$ via the
transformation $t \mapsto 1-u$. 
The corollary is now a consequence of Lemma~\ref{lem:2-torsion} and the 
solution of the Lichtenbaum's conjecture (about the $K$-theory of algebraically 
closed fields) by Suslin \cite{Suslin}. 

Suslin has shown that for $p = 2q > 0$, $K_{p}(\C)$ is uniquely divisible.
He has also shown that for $p = 2q-1 > 0$ and $n \ge 0$, there is a 
commutative diagram
\[
\xymatrix@C2pc{
\ _{2^{n+1}}K_p(\C) \ar[r]^>>>>>{\simeq} \ar[d]_{2} & \mu_{2^{n+1}} \ar[d] \\
\ _{2^n}K_p(\C) \ar[r]_>>>>>>{\simeq} & \mu_{2^n}}
\]
such that the left vertical arrow is  multiplication by $2$, the
right vertical arrow is the obvious quotient map and the horizontal arrows
are isomorphisms ({\sl cf.} \cite[Proof of Theorem~IV.1.6]{Weibel}). 
In particular, the left vertical arrow is surjective.
The corollary now follows from this and Lemma~\ref{lem:2-torsion}.
\end{proof}

\begin{prop}\label{prop:simple}
Let $X = G/H$ be as chosen in the beginning of this section and let
$\rho = 1-t \in R(G) = \Z[t, t^{-1}]$. Then
\begin{enumerate}
\item
For $p \ge 0$ even, ${\underset{n}{\varprojlim}^m} \ _{\rho^n}K^G_p(X) = 0$
for $m = 0,1$.
\item
For $p > 0$ odd, 
${\underset{n}{\varprojlim}^1} \ _{\rho^n}K^G_p(X) = 0$ and \ \ 
${\underset{n}\varprojlim} \ _{\rho^n}K^G_p(X) \simeq \Z_2$. 
\end{enumerate}
\end{prop}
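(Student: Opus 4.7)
The plan is to reduce the computation of $\ _{\rho^n}K^G_p(X)$ to Lemma~\ref{lem:2-torsion} and Corollary~\ref{cor:2-torsion**}. First, since $X = G/H$ is smooth, Theorem~\ref{thm:Morita} yields a canonical $R(G)$-linear isomorphism
\[
K^G_p(G/H) \ \cong \ K^H_p(\Spec \C) \ \cong \ R(H) \otimes_{\Z} K_p(\C),
\]
where the second isomorphism uses that $H = \mu_2$ is diagonalizable acting trivially on a point (a special case of \cite[Lemma~5.6]{Thomason2}). Writing $R(H) = \Z[t]/(t^2-1)$, the $R(G) = \Z[t, t^{-1}]$-module structure on $M := R(H) \otimes K_p(\C)$ factors through the restriction map $R(G) \twoheadrightarrow R(H)$.

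Next I would compute the action of $\rho = 1-t$ on $M$ explicitly. The identity $(1-t)^2 = 2(1-t)$ in $R(H)$ gives $\rho^n = 2^{n-1}(1-t)$ for all $n \geq 1$. Using the decomposition $M = K_p(\C) \oplus t \cdot K_p(\C)$, the operator $\rho$ sends $(a,b)$ to $(a-b,\,b-a)$, whence
\[
\ _{\rho^n}M \ = \ \bigl\{(a,b) \in K_p(\C)^2 : 2^{n-1}(a-b) = 0 \bigr\}.
\]
This sits in a canonical short exact sequence
\[
0 \to K_p(\C) \xrightarrow{c \mapsto (c,c)} \ _{\rho^n}M \xrightarrow{(a,b) \mapsto a-b} \ _{2^{n-1}}K_p(\C) \to 0,
\]
in which the image of the first map is $(1+t)M = \ker(\rho \colon M \to M)$.

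The key step is to upgrade this to a short exact sequence of inverse systems. Equipping $\{\ _{\rho^n}M\}$ with the structure maps induced from the quotients $R(G)/\rho^{n+1} \twoheadrightarrow R(G)/\rho^n$ via the identification $\ _{\rho^n} M = \Tor_1^{R(G)}(R(G)/\rho^n, M)$ yields multiplication by $\rho$ as the transition $\ _{\rho^{n+1}} M \to \ _{\rho^n} M$. A direct check shows that $\rho$ annihilates $K_p(\C) \cdot (1,1)$ and induces multiplication by $2$ on the quotient $\ _{2^{n-1}} K_p(\C)$, producing a short exact sequence of pro-abelian groups
\[
0 \to \{K_p(\C), 0\} \to \{\ _{\rho^n} K^G_p(X)\} \to \{\ _{2^{n-1}} K_p(\C), \cdot 2\} \to 0,
\]
where the kernel is the constant system with vanishing transitions.

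Finally, both $\varprojlim$ and $\varprojlim^1$ of any system with zero transition maps vanish, so the long exact sequence of derived limits reduces everything to computing $\varprojlim^m$ of $\{\ _{2^{n-1}} K_p(\C), \cdot 2\}$. By Lemma~\ref{lem:2-torsion} this system is naturally isomorphic to $\{\Tor^1_\Z(S_n, K_p(\C))\}$, and Corollary~\ref{cor:2-torsion**} then yields both claimed answers. The only delicate point is tracking the transition maps through the identification in the third step; once this is done, the rest is a direct appeal to the earlier computations.
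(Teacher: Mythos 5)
Your proof is correct, and it funnels into the same final computation as the paper (the system $\{{}_{2^{n-1}}K_p(\C)\}$ with multiplication-by-$2$ transition maps, handled by Lemma~\ref{lem:2-torsion}, Corollary~\ref{cor:2-torsion**} and Suslin's theorem), but the middle of the argument is genuinely different. The paper tensors the four-term sequence $0 \to {}_{\rho^n}R(H) \to R(H) \xrightarrow{\rho^n} R(H) \to S_n \to 0$ with $K_*(\C)$, extracts by a diagram chase the right-exact sequence ${}_{\rho^n}R(H)\otimes_{\Z}K_*(\C) \to {}_{\rho^n}K^G_*(X) \to {\rm Tor}^1_{\Z}\left(S_n, K_*(\C)\right) \to 0$, and then kills the left-hand system as a pro-abelian group by a Noetherian stabilization argument, so it only obtains a pro-isomorphism onto $\left\{{\rm Tor}^1_{\Z}\left(S_n,K_p(\C)\right)\right\}$. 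You instead use the identity $(1-t)^2 = 2(1-t)$ in $R(H)$ and the splitting $M = K_p(\C)\oplus tK_p(\C)$ to write down an honest short exact sequence of inverse systems whose kernel is levelwise $K_p(\C)$ with literally zero transition maps and whose quotient is $\{{}_{2^{n-1}}K_p(\C)\}$ with transition multiplication by $2$, and then you conclude with the $\varprojlim$--${\varprojlim}^1$ six-term sequence. Your route buys transparency: the vanishing of the error term is visible by inspection (indeed ${}_{\rho^n}R(H) = (1+t)R(H)$ is annihilated by $\rho$, so the paper's pro-zero claim is immediate in this example), and no Tor diagram chase is needed; the paper's route buys robustness, since the argument via ${\rm Tor}^1_{\Z}(S_n,-)$ and Noetherian stabilization works for any diagonalizable $H$ without an explicit splitting of $R(H)\otimes_{\Z} K_p(\C)$. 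The one point you rightly flag and should keep explicit is the choice of transition maps on $\{{}_{\rho^n}K^G_p(X)\}$: the statement leaves them implicit, and multiplication by $\rho$ (which your ${\rm Tor}_1^{R(G)}$ identification supplies) is exactly what is needed for the application via \cite[Theorem~2.1]{VV} in the proof of Theorem~\ref{thm:Counter-E}, in agreement with the paper's usage.
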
 
\begin{proof}
The Morita equivalence and ~\eqref{eqn:K-trivial} imply that 
$K^G_*(X) \simeq K_*(\C) {\underset{\Z}\otimes} \  R(H)$. Moreover, we know that
$R(H) \simeq \Z[t]/{(t^2-1)}$.
We have the exact sequence 
\[
0 \to \ _{\rho^n}R(H) \to R(H) \xrightarrow{\rho^n} R(H) \to
S_n \to 0,
\]
with $S_n$ as in Lemma~\ref{lem:2-torsion**}.
This yields a commutative diagram of exact sequences
\begin{equation}\label{eqn:simple0}
\xymatrix@C.5pc{
\ _{\rho^n}R(H) {\underset{\Z}\otimes} \  K_*(\C) \ar[r] & 
R(H) {\underset{\Z}\otimes} \  K_*(\C) 
\ar[dl]_{\rho^n} \ar[r] & 
\rho^nR(H) {\underset{\Z}\otimes} \  K_*(\C) \ar[r] \ar@{=}[dl] & 0 
\\
R(H) {\underset{\Z}\otimes} \  K_*(\C) &
\rho^nR(H) {\underset{\Z}\otimes} \  K_*(\C) \ar[l] & 
{\rm Tor}^1_{\Z}\left(S_n, K_*(\C)\right)
\ar[l] \ar[u] & 0 \ar[l]}
\end{equation}
where the map ${\rm Tor}^1_{\Z}\left(S_n, K_*(\C)\right) \to 
\rho^nR(H) {\underset{\Z}\otimes} \  K_*(\C)$ is injective because the 
possible kernel of this map comes from 
${\rm Tor}^1_{\Z}\left(R(H), K_*(\C)\right)$, and 
this is zero since $R(H)$ is torsion-free.
A diagram chase gives us for $n \ge 0$, an exact sequence

\begin{equation}\label{eqn:simple1}
\ _{\rho^n}R(H) {\underset{\Z}\otimes} \  K_*(\C) \to \ _{\rho^n}K^G_*(X) \to 
{\rm Tor}^1_{\Z}\left(S_n, K_*(\C)\right) \to 0.
\end{equation}

Let $\ov{\ _{\rho^n}R(H) {\underset{\Z}\otimes} \  K_*(\C)}$ denote the image of 
the first map in ~\eqref{eqn:simple1}. We claim that the pro-abelian group
$``{{\underset{n}\varprojlim}}" \ 
\ov{\ _{\rho^n}R(H) {\underset{\Z}\otimes} \  K_*(\C)}$ is zero.
For this, it is enough to show that 
$``{{\underset{n}\varprojlim}}" \ 
\ _{\rho^n}R(H) {\underset{\Z}\otimes} \  K_*(\C) = 0$.

Since $R(H)$ is a noetherian ring, the chain 
$\ _{\rho}R(H) \subseteq \ _{\rho^2}R(H) \subseteq \cdots $ of ideals in $R(H)$
must stabilize. In other words, there exists $m \gg 0$ such that
$\ _{\rho^m}R(H) = \ _{\rho^{m+1}}R(H) = \cdots $. This implies that
$\rho^m$ annihilates $\ _{\rho^{n+m}}R(H)$ for every $n \ge 0$.
That is, the map $\ _{\rho^{n+m}}R(H) \xrightarrow{\rho^m} \ _{\rho^{n}}R(H)$
is zero. Hence, the map
\[
\ _{\rho^{n+m}}R(H) {\underset{\Z}\otimes} \  K_*(\C) \xrightarrow{\rho^m}
\ _{\rho^{n}}R(H) {\underset{\Z}\otimes} \  K_*(\C)
\]
is zero for every $n \ge 0$. This proves the claim.
We conclude from this claim and ~\eqref{eqn:simple1} that 
the map of pro-abelian groups
\[
``{{\underset{n}\varprojlim}}" \  \ _{\rho^n}K^G_*(X) \to 
``{{\underset{n}\varprojlim}}" \ {\rm Tor}^1_{\Z}\left(S_n, K_*(\C)\right)
\]
is an isomorphism.
We now apply Lemma~\ref{lem:2-torsion**} to conclude the proof.
\end{proof}

\begin{lem}\label{lem:Counter-E-step}
Let $X = G/H$ be as in Theorem~\ref{thm:Counter-E} and let 
$\rho = 1-t \in R(G) = \Z[t, t^{-1}]$. Let $\left(V_i, U_i\right)$ be a good 
admissible gadget for $G$ as in \S~\ref{subsubsection:CAG}. Then for
every $p \ge 0$ and $i \ge 1$, there is a short exact sequence of 
$R(G)$-modules
\begin{equation}\label{eqn:CE-0}
0 \to \frac{K^G_p(X)}{(\rho^i)} \to K_p(X^i_G) \to \ _{\rho^i}K^G_{p-1}(X) \to 0.
\end{equation}
\end{lem}
\begin{proof}
Using Theorem~\ref{thm:Morita} (see also \cite[Theorem~3.8, Remark~3.9]{JK}),
the lemma is equivalent to showing that there is a short exact sequence
of $R(G)$-modules
\begin{equation}\label{eqn:CE-1}
0 \to \frac{K^H_p(k)}{(\rho^i)} \to K^H_p(U_i) \to \ _{\rho^i}K^H_{p-1}(k) \to 0.
\end{equation}

In our choice of the admissible gadget $(V_i, U_i)$ for $G$ in
\S~\ref{subsubsection:CAG}, we take
$V_i = \A^i_k$ with $G$ acting by the scalar multiplication and $U_i = \A^i_k
\setminus \{0\}$.

Writing down the long exact equivariant $K$-theory localization sequence
for the inclusion $U_i \inj \A^i_k$, using the isomorphism
$K^H_*(\A^i_k) \xrightarrow{\simeq} K^H_*(k)$ via the 0-section embedding
and using the self-intersection formula \cite[Theorem~2.1]{VV},
we obtain a short exact sequence
\begin{equation}\label{eqn:CE-2}
0 \to \frac{K^H_p(k)}{(\alpha_i)} \to K^H_p(U_i) \to \ _{\alpha_i}K^H_{p-1}(k) 
\to 0,
\end{equation}
where $\alpha_i = c_i(N_{{\{0\}}/{\A^i_k}}) \in R(H)$ is the $i$-th equivariant 
Chern class of the normal bundle $N_{{\{0\}}/{\A^i_k}}$ for the inclusion 
$\{0\} \inj \A^i_k$. 

As shown in \cite[Theorem~2.1]{VV}, we have
$\alpha_i = 1 - [W_i] + [\wedge^2(W_i)] - \cdots + (-1)^i [\wedge^i(W_i)]$,
where $W_i = (V_i)^{\vee}$ for every $i \ge 1$.
Letting $H = \< \sigma\>$, we see that $V_i = V^{\oplus i}$, where $V$ is the 
1-dimensional representation of $H$ given by $\sigma(v) = -v$.
In particular, one has $[V] = [V^{\vee}]$.
Using this observation and the equalities $t = [V], \ t^2 = 1$ in $R(H)$, we get
$[\wedge^j(W_i)] = {i \choose j} t^j$ for $0 \le j \le i$.
We conclude from this that $\alpha_i = (1-t)^i = \rho^i$.
This proves the lemma.
\end{proof}

\begin{lem}\label{lem:Counter-E-step-I}
Let $X = G/H$ be as in Theorem~\ref{thm:Counter-E} and let 
$\rho = 1-t \in R(G) = \Z[t, t^{-1}]$. Let $\left(V_i, U_i\right)$ be a good 
admissible gadget for $G$ as in \S~\ref{subsubsection:CAG}. Then for
every $p \ge 0$, there is a short exact sequence of pro-$R(G)$-modules
\begin{equation}\label{eqn:CE-3}
0 \to ``{{\underset{i}\varprojlim}}" \ \frac{K^G_p(X)}{(\rho^i)} \to 
``{{\underset{i}\varprojlim}}" \ K_p(X^i_G) \to 
``{{\underset{i}\varprojlim}}" \ \ _{\rho^i}K^G_{p-1}(X) \to 0.
\end{equation}
\end{lem}
\begin{proof}
We continue to use our notations of Lemma~\ref{lem:Counter-E-step}.
The lemma is again equivalent to showing that there is a 
short exact sequence of pro-$R(G)$-modules
\begin{equation}\label{eqn:CE-4}
0 \to ``{{\underset{i}\varprojlim}}" \ \frac{K^H_p(k)}{(\rho^i)} \to 
``{{\underset{i}\varprojlim}}" \ K^H_p(U_i) \to 
``{{\underset{i}\varprojlim}}" \ \ _{\rho^i}K^H_{p-1}(k) \to 0.
\end{equation}

For any $i \ge 1$, we have the two Cartesian squares:
\begin{equation}\label{eqn:CE-5}
\xymatrix@C.5pc{
\{0\} \ar[r] \ar[d] & \A^i_k \ar[d]^{\phi_i} & & \wt{U}_{i+1}\ar[r] \ar[d] & 
\A^{i+1}_k \ar[d]^{\eta_i} \\
\A^1_k \ar[r]_{\psi_i} & \A^{i+1}_k & & U_i \ar[r] & \A^i_k,} 
\end{equation}
where $\phi_i(x_1, \cdots , x_i) = (x_1, \cdots x_i, 0)$,
$\psi_i(x) = (0, \cdots , 0, x)$, $\eta_i(x_1, \cdots, x_{i+1}) =
(x_1, \cdots , x_i)$ and $\wt{U}_{i+1} = \A^{i+1}_k \setminus {\rm Im}(\psi_i)
\inj U_{i+1}$.

These squares give us a diagram of equivariant $K$-theory spectra
\begin{equation}\label{eqn:CE-6}
\xymatrix@C.8pc{
K^H(\{0\}) \ar[r] \ar[d] & K^H(\A^{i+1}_k) \ar[r] \ar@{=}[d] &
K^H(U_{i+1}) \ar[d] \\
K^H(\A^1_k) \ar[r]^{(\psi_i)_*} \ar[d] & K^H(\A^{i+1}_k) \ar[r] \ar[d]^{\phi^*_i} &
K^H(\wt{U}_{i+1}) \ar[d] \\ 
K^H(\{0\}) \ar[r] & K^H(\A^{i}_k) \ar[r] & K^H(U_{i})}
\end{equation}
in which the three rows are localization sequences.

The top localization sequence maps to the middle one by
\cite[Remark~3.4]{Quillen}. The middle localization sequence
maps to the bottom one via the 0-sections of the vector bundle $\eta_i$.
In particular, the middle localization sequence is weak equivalent to the 
bottom sequence by the homotopy invariance.   
We should also observe that at the level of homotopy groups,
the composite left vertical arrow is
given by the multiplication by the first Chern class of the
normal bundle for the inclusion $\{0\} \inj \A^1_k$.
Arguing as in the proof of Lemma~\ref{lem:Counter-E-step}, we thus get a
commutative diagram of exact sequences
\begin{equation}\label{eqn:CE-7}
\xymatrix@C.8pc{
K^H_p(k) \ar[r]^{\rho^{i+1}} \ar[d]_{\rho} & K^H_p(k) \ar[r] \ar@{=}[d] &
K^H_p(U_{i+1}) \ar[r] \ar[d] & K^H_{p-1}(k) \ar[r]^{\rho^{i+1}} \ar[d]_{\rho} &
K^H_{p-1}(k) \ar@{=}[d] \\
K^H_p(k) \ar[r]^{\rho^{i}} & K^H_p(k) \ar[r] &
K^H_p(U_{i}) \ar[r] & K^H_{p-1}(k) \ar[r]^{\rho^{i}} &K^H_{p-1}(k).}
\end{equation}
This proves ~\eqref{eqn:CE-4} and completes the proof of the lemma. 
\end{proof}

\vskip .3cm

{\sl{Proof of Theorem~\ref{thm:Counter-E}:}}
Given $X = G/H$ as in Theorem~\ref{thm:Counter-E},
we apply Lemma~\ref{lem:Counter-E-step-I} to get a short exact sequence of 
pro-$R(G)$-modules as in ~\eqref{eqn:CE-3}. 
We also notice that the structure maps of the left pro-$R(G)$-module in this
sequence are surjective. Since $I_G = (\rho)$, 
taking the limits and using Theorem~\ref{thm:Stable-Ind**}, we conclude that
\begin{equation}\label{eqn:CE8}
0 \to \wh{{K^G_p(X)}_{I_G}} \to \sK_p(X_G) \to {\underset{i}\varprojlim} \
_{\rho^i}K^G_{p-1}(X) \to 0
\end{equation} 
is exact and ${\underset{i}{\varprojlim}^1} \ K_p(X^i_G) 
\xrightarrow{\simeq} {\underset{i}{\varprojlim}^1} \ _{\rho^i}K^G_{p-1}(X)$ 
for each $p \ge 0$. 
Combining this isomorphism with Proposition~\ref{prop:simple} and
the Milnor exact sequence ~\eqref{eqn:MES} 
\[
0 \to  {\underset{i}{\varprojlim}^1} \ K_{p+1}(X^i_G) \to K_p(X_G) \to \sK_p(X_G)
\to 0,
\]
we conclude that the map 
$K_p(X_G) \to \sK_p(X_G)$ is an isomorphism for all $p \ge 0$.
Combining ~\eqref{eqn:CE8} and Proposition~\ref{prop:simple},
we conclude that 
$\wh{{K^G_p(X)}_{I_G}} \xrightarrow{\simeq} \sK_p(X_G)$ for $p > 0$ odd,
and there is a short exact sequence
\[
0 \to \wh{{K^G_p(X)}_{I_G}} \to \sK_p(X_G) \to \Z_2 \to 0
\] 
for $p > 0$ even. This finishes the proof of Theorem~\ref{thm:Counter-E}.
$\hfill\square$

\vskip .3cm

\noindent\emph{Acknowledgments.}
The author would like to thank the anonymous referee for numerous 
encouraging comments and suggestions which were very helpful in improving the
contents of this paper.

\end{document}